\DeclareMathOperator{\supp}{supp}
\newcommand{\st}{\: : \:}
\newcommand{\vast}{\bBigg@{4}}
\newcommand{\Vast}{\bBigg@{5}}
\newcommand{\numberthis}{\addtocounter{equation}{1}\tag{\theequation}}
\DeclareFontFamily{U}{mathx}{\hyphenchar\font45}
\DeclareFontShape{U}{mathx}{m}{n}{
<5> <6> <7> <8> <9> <10>
<10.95> <12> <14.4> <17.28> <20.74> <24.88>
mathx10
}{}
\DeclareSymbolFont{mathx}{U}{mathx}{m}{n}
\DeclareMathAccent{\widecheck}{0}{mathx}{"71}
\def\citep#1#2{\cite[{#1}]{#2}}
\newcommand{\bbN}{{\mathbb{N}}}
\newcommand{\bbP}{{\mathbb{P}}}
\newcommand{\bbZ}{{\mathbb{Z}}}
\newcommand{\N}{{\mathbb{N}}} % natural numbers {1, 2, ...}
\newcommand{\Z}{{\mathbb{Z}}} % integers
\DeclareSymbolFont{bbold}{U}{bbold}{m}{n}
\DeclareSymbolFontAlphabet{\mathbbold}{bbold}
\newcommand{\bbone}{{\mathbbold{1}}}
\newcommand{\bsh}{{\boldsymbol{h}}}
\newcommand{\bsx}{{\boldsymbol{x}}}
\newcommand{\bsz}{{\boldsymbol{z}}}
\newcommand{\bszero}{{\boldsymbol{0}}} % vector of zeros
\newcommand{\bsgamma}{{\boldsymbol{\gamma}}}
\newcommand{\calH}{{\mathcal{H}}}
\newcommand{\calL}{{\mathcal{L}}}
\newcommand{\calO}{{\mathcal{O}}}
\newcommand{\calP}{{\mathcal{P}}}
\newcommand{\calU}{{\mathcal{U}}}
\newcommand{\calZ}{{\mathcal{Z}}}
\newcommand{\fraku}{{\mathfrak{u}}}
\newcommand{\setu}{{\mathfrak{u}}}
\newcommand{\rme}{{\mathrm{e}}}
\newcommand{\rmi}{{\mathrm{i}}}
\newcommand{\ceil}[1]{\left\lceil #1 \right\rceil}    % ceil
\newcommand{\rd}{\,\mathrm{d}} % differential symbol with tiny space in front for use in integrals
  \providecommand*{\toclevel@author}{999}
  \providecommand*{\toclevel@title}{0}
\theoremstyle{plain}
  \newtheorem{theorem}{Theorem}[section]
  \newtheorem{proposition}[theorem]{Proposition}
  \newtheorem{lemma}[theorem]{Lemma}
  \newtheorem{alg}[theorem]{Algorithm}
\theoremstyle{remark}
  \newtheorem{remark}[theorem]{Remark}
\newcommand{\doublewidetilde}[1]{{%
  \mathpalette\double@widetilde{#1}%
}}
\newcommand{\double@widetilde}[2]{%
  \sbox\z@{$\m@th#1\widetilde{#2}$}%
  \ht\z@=.9\ht\z@
  \widetilde{\box\z@}%
}
\newcommand{\Pn}{\mathcal{P}_n}
\newcommand{\norm}[1]{\|#1\|_{d,\alpha,\bsgamma}}
\newcommand{\ralpha}{r_{\alpha,\bsgamma}}
\newcommand{\eran}[1]{e_{#1,\alpha,\bsgamma}^{\mathrm{\textup{ran}}}}
\newcommand{\edet}[1]{e_{#1,\alpha,\bsgamma}^{\mathrm{\textup{det}}}}
\newcommand{\spaceH}{\calH_{d,\alpha,\bsgamma}}
\newcommand{\Tquant}[1]{T^{(#1)}}
\newcommand{\thetaquant}{\Theta}
\newcommand{\muquant}{\mu_{d,\alpha,\bsgamma}(\lambda)}
\newcommand{\Zdnull}{\Z^d \setminus \{\bszero\}}
\newcommand{\Euler}[2]{\rme^{{#1} 2 \pi \rmi {#2}}}
\newcommand{\constPn}{c'}
\def\calO{O}
\definecolor{darkred}{RGB}{139,0,0}
\definecolor{darkgreen}{RGB}{0,100,0}
\definecolor{darkmagenta}{RGB}{170,0,120}
\definecolor{darkpurple}{RGB}{110,0,180}
\definecolor{darkblue}{RGB}{40,0,200}
\definecolor{darkbrown}{rgb}{0.75,0.40,0.15}
\newcommand{\randprandzstar}{Q^{\textrm{\textup{ran-pr-vec}}}_{d,n,\tau}}
\newcommand{\randprandzstarsubsec}{Q^{\textrm{\textup{\textnormal{ran-pr-vec}}}}_{d,n,\tau}}
\newcommand{\randprandCBCzstar}{Q^{\textrm{\textup{ran-pr-cbc-vec}}}_{d,n,\tau}}
\newcommand{\randpstarz}{Q^{\textrm{\textup{ran-pr}}}_{d,n,\bsz}}
\newcommand{\randpstarzs}{Q^{\textrm{\textup{ran-pr}}}_{s,n,\bsz}}
\newcommand{\randpstarzstar}{Q^{\textrm{\textup{ran-pr}}}_{d,n,\bsz^\star}}
\newcommand{\randpstarzstars}{Q^{\textrm{\textup{ran-pr}}}_{s,n,\bsz^\star}}
\newcommand{\randpstarzstarPn}{Q^{\textrm{\textup{ran-pr}}}_{d,n,\bsz^\star_{\Pn}}}
\newcommand{\randpstarsminusone}{Q^{\textrm{\textup{ran-pr}}}_{s-1,n,\bsz'}}
\newcommand{\randpstarone}{Q^{\textrm{\textup{ran-pr}}}_{1,n,z_1=1}}
\begin{document}

\title{Random-prime--fixed-vector
randomised lattice-based algorithm for high-dimensional integration}

\author{Frances Y.\ Kuo, Dirk Nuyens, Laurence Wilkes}

\date{April 2023}

\maketitle

\begin{abstract}
We show that a very simple randomised algorithm for numerical integration
can produce a near optimal rate of convergence for integrals of
functions in the $d$-dimensional weighted Korobov space. This
algorithm uses a lattice rule with a fixed generating vector and the only
random element is the choice of the number of function evaluations.
For a given computational budget $n$ of a maximum allowed number of
function evaluations, we uniformly pick a prime $p$ in the range $n/2
< p \le n$. We show error bounds for the randomised error, which is
defined as the worst case expected error, of the form $\calO(n^{-\alpha -
1/2 + \delta})$, with $\delta > 0$, for a Korobov space with smoothness $\alpha > 1/2$ and
general weights. The implied constant in the bound is
dimension-independent given the usual conditions on the weights. We
present an algorithm that can construct suitable generating vectors
\emph{offline} ahead of time at cost $\calO(d n^4 / \ln n)$ when
the weight parameters defining the Korobov spaces are so-called product
weights. For this case, numerical experiments confirm our theory that the
new randomised algorithm achieves the near optimal rate of the randomised
error.
\end{abstract}

\section{Introduction}\label{section:Introduction}

We consider the problem of high-dimensional integration over the unit
cube
$$
  I_d(f)
  :=
  \int_{[0,1]^d} f(\bsx) \rd\bsx
$$
in the \emph{weighted Korobov space} \cite{NW2008,NW2010,SW2001} of one-periodic
functions with smoothness parameter $\alpha>1/2$. If $\alpha$ is an
integer then the space consists of functions whose mixed derivatives up to
order $\alpha$ in each coordinate direction are square-integrable.
Weights were introduced in \cite{SW2001}, see also \cite{NW2008,NW2010,NW2012}, to obtain
tractability results, and they assign an importance to each subset of the dimensions.
Further details on the function space are given in Section~\ref{section:function_space}.
It is known that deterministic cubature rules with $n$ sample points can
achieve the optimal \emph{worst case error} convergence rate of nearly
$\calO(n^{-\alpha})$, while randomised algorithms with a computational
budget of $n$ function evaluations can, for the worst case expected
error, also called the \emph{randomised error}, achieve the optimal
convergence rate of $\calO(n^{-\alpha-1/2})$, which is half an order
better, see, e.g., \cite{U2017,NW2008,B1961}. We formally define both error
criteria in Section~\ref{section:worst_case_errors}.

A deterministic cubature rule is a weighted average of function values of
the integrand $f$, where the cubature points, the cubature weights, as
well as the number of cubature points are pre-determined. In particular, a
deterministic (rank-$1$) \emph{lattice rule}, see, e.g., \cite{SJ1994,N1992,DKP2022}, with $n$ points and a given
generating vector $\bsz \in \mathbb{Z}^d$ takes the form
\begin{align}\label{eq:Q}
  Q_{d,n,\bsz}(f)
  :=
  \frac{1}{n} \sum_{k=0}^{n-1} f\left(\frac{k\bsz \bmod{n}}{n}\right)
  ,
\end{align}
from which it is easily seen that each of the $d$ components of the
generating vector $\bsz = (z_1,\ldots,z_d)$ can be restricted to the set
$\{0,1,\ldots,n-1\}$. Thus, there are $n^d$ possible choices of generating
vectors --- a number that is too large to exhaustively search through in
practice to find the best one. Fortunately, by now it is well-known that
good generating vectors can be constructed \emph{offline} using a
\emph{component-by-component \textnormal{(}CBC\textnormal{)}
construction} such that the lattice rule achieves the optimal worst case error convergence rate
close to $\calO(n^{-\alpha})$, and the implied constant can be independent
of the dimension~$d$ under appropriate conditions on the function space
weights, see~\cite{K2003, DSWW2006}. In a nutshell, the CBC
construction is a greedy strategy which determines the generating vector~$\bsz$
one component at a time, with only~$n$ choices to consider in the
step for selecting the component $z_s$ while holding previously chosen
components $z_1,\ldots,z_{s-1}$ fixed. In the case of product-type
weights, the construction cost is $\calO(d\, n \ln n)$ operations using
the fast Fourier transform, see \cite{NC2006-1,NC2006-2}.

On the other hand, a randomised algorithm is one for which part
of the algorithm is randomly determined \emph{online} at runtime \cite{NW2008,TWW1988,DKP2022}. The
following algorithm was shown by Kritzer, Kuo, Nuyens and Ullrich in 2019
\cite{KKNU2019} to achieve a near optimal randomised error of
$\calO(n^{-\alpha-1/2+\epsilon})$, for~$\epsilon > 0$.
\begin{alg}[``Random-prime--random-vector'' algorithm $\randprandzstar$, see~\eqref{eq:ran-p-z} below]\label{alg:RPRV}
\mbox{}
\begin{enumerate}
\item Select a random prime $p$ from the set $\calP_n$ of primes which satisfy $n/2 < p \leq n$.
\item Select a random generating vector $\bsz \in G^{(p)}$ from the set of good generating vectors for~$p$.
\item Use the lattice rule with generating vector $\bsz$ and $p$ sample points.
\end{enumerate}
\end{alg}

The idea of the ``random-prime--random-vector'' algorithm from
\cite{KKNU2019} originated from Bakhvalov in 1961 \cite{B1961}. The
precise definition of the set of ``good generating vectors'' $G^{(p)}$
involves some extra parameters as subscripts, see
\eqref{eq:good_sets} below. For now it suffices to think of $G^{(p)}$
as the top $50\%$ of the~$p^d$ choices of generating vectors with the
smallest worst case errors. Since the size of the set~$G^{(p)}$ is still
exponentially large in $d$, it is not possible to pre-compute and store
the good set for each $p$.

A constructive algorithm was analysed by Dick, Goda and Suzuki in 2022
\cite{DGS2022} to also achieve a near optimal randomised error.
\begin{alg}[``Random-prime--random-CBC-vector'' algorithm $\randprandCBCzstar$, see~\eqref{eq:ran-p-cbc-z} below]\label{alg:RPRCBCV}
\mbox{}
\begin{enumerate}
\item Select a random prime $p$ from the set $\calP_n$ of primes which satisfy $n/2 < p \leq n$.
\item For $s = 1,\ldots,d$, select a random value for $z_s \in
    \widetilde{G}^{(p)}_{s,\bsz'}$ from the set of good
    components for dimension $s$ and prime $p$, with the previously
    chosen components $\bsz' = (z_1,\ldots,z_{s-1})$ fixed.
\item Use the lattice rule with generating vector $\bsz$ and $p$ sample points.
\end{enumerate}
\end{alg}

Without going into the full details, for now we can think of the set of
``good components'' $\widetilde{G}^{(p)}_{s,\bsz'}$ as the top $50\%$ of
the $p$ possible values for the component $z_s$ with the smallest worst
case errors in dimension~$s$. Since there are only $p$ possible values in
total, it is now computationally feasible to check all of them to find the
better half. Indeed, evaluating the worst case errors for all $p$ values
of $z_s$ can be done using the fast Fourier transform at
$\calO(p\,\ln p)$ operations (assuming product-type weights). We can then
sort the resulting worst case errors and randomly select a value of $z_s$
from the half with the smallest worst case errors. Repeating this for all
$d$ components leads to a total \emph{online} computational cost of
$\calO(d\,n\ln n)$ operations. This
``random-prime--random-CBC-vector'' algorithm from \cite{DGS2022} is a
fully \emph{online} algorithm, since the prime $p$ and all the components
of the generating vector $\bsz$ need to be selected randomly at runtime.

In this paper we propose a new randomised algorithm which also achieves a near optimal randomised error rate. Our new algorithm only selects the prime
$p$ \emph{online} and then makes use of a fixed generating vector~$\bsz^\star$
that is precomputed \emph{offline}. The \emph{online} computational cost
is therefore minimal and we are able to spend the \emph{offline} computation time choosing a vector which minimises the randomised error quantity, for which we will propose an algorithm in Section~\ref{section:construction_of_random_vec}.
\begin{alg}[``Random-prime--fixed-vector'' algorithm $\randpstarzstar$, see \eqref{eq:ran-p} below]\label{alg:RPFV}
\mbox{}
\begin{enumerate}\setcounter{enumi}{-1}
\item {}\textnormal{[}Offline\textnormal{]} Construct $\bsz^\star$ for the set $\calP_n$ of primes which satisfy $n/2 < p \leq n$.
\item Select a random prime $p$ from the set $\calP_n$.
\item Use the lattice rule with the pre-constructed generating vector~$\bsz^\star$ and $p$ sample points.
\end{enumerate}
\end{alg}

We derive an explicit formula for the randomised error for a given
generating vector~$\bsz$ and a computational budget $n$, and then we
prove the existence of such a single generating vector~$\bsz^\star$ which is
universally good for use with all primes $p$ which satisfy $n/2 < p \leq n$. For
this to work, the components of $\bsz^\star$ need to be as large as the product
of all primes between $n/2$ and $n$, denoted by
\[
 N := \prod_{p\in\calP_n} p.
\]
The size of $N$ can be enormous, since the cardinality of $\calP_n$ is of
the order $n/\ln n$. We therefore utilise the Chinese remainder theorem
\[
  \bbZ_N \cong \bigoplus_{p\in\Pn} \bbZ_p,
\]
to break down a vector $\bsz$ into multiple vectors
\[
  \bsz \cong \langle \bsz^{(p_1)}, \bsz^{(p_2)}, \ldots, \bsz^{(p_L)} \rangle, \qquad L := |\calP_n|,
\]
so that the generating vector is only ever considered modulo one of the
primes in $\calP_n$.

Next we propose a CBC construction to obtain this universally good single
generating vector~$\bsz^\star$ for a given computational budget $n$,
\emph{offline}. The evaluation of the randomised error formula for one
generating vector already requires $\calO(d\,n^4/(\ln n)^2)$ operations.
A naive CBC construction would have a cost of $\calO(d\, n^{n+3})$
operations which would be prohibitive. Instead, we propose a construction
which not only operates dimension by dimension, but also prime by prime,
with a computational cost of only $\calO(d\,n^4 / \ln n)$ operations, which is
effectively the same as the cost for evaluating the randomised error for
one generating vector. We stress that this is an \emph{offline} cost and the
vector can be precomputed and stored.

There are other randomised algorithms. The Frolov algorithm \cite{KN2017,U2017}
is also able to obtain the optimal randomised error. It achieves this by scaling an ``irrational lattice'' such that the number of points in the unit cube is a random element.
The ``median of means'' algorithm \cite{GlE2022} uses several randomly generated generating vectors and then takes as the final approximation the median of all corresponding lattice rule approximations to obtain the optimal rate of the deterministic worst case error with high probability.

The structure of the paper is as follows. In
Section~\ref{section:preliminaries_and_setting}, we provide necessary
background of similar results in the deterministic setting and also
look at the recent developments in the randomised setting. In
Section~\ref{section:existence_of_a_single_vector_obtaining_the_optimal_error},
we prove that there exist generating vectors for which Algorithm~\ref{alg:RPFV}
achieves the near optimal rate of randomised error convergence. In
Section~\ref{section:construction_of_random_vec}, we propose and analyse a
method to construct a fixed vector for Algorithm~\ref{alg:RPFV}.
Finally, in Section~\ref{section:numerical_results}, we present some
numerical results for Algorithm~\ref{alg:RPFV} with well constructed
generating vectors.

\section{Preliminaries and settings}\label{section:preliminaries_and_setting}

In this section we reintroduce Algorithm~\ref{alg:RPRV},
$\randprandzstar$, from \cite{KKNU2019} for the case $\alpha > 1/2$ and
provide some necessary background. We extend the proof of
\cite[Theorem~9]{KKNU2019} from product weights to general weights. Note that this reproduction introduces a new method of proof
which will also be applied in
Section~\ref{section:existence_of_a_single_vector_obtaining_the_optimal_error}
for our new algorithm, Algorithm~\ref{alg:RPFV}.

The notation in this paper is as follows. We will consider the ring $\Z_n$ to be
equivalent to~$\{0,\ldots,n-1\}$.
We define the set of natural
numbers to be $\mathbb{N} := \{1, 2, \ldots\}$. We use the shorthand~$\equiv_n$ to mean congruent modulo~$n$. We will use $\bbone(\cdot)$ to
denote the indicator function. When we write $\bbone(\bsh \equiv_p
\bszero)$ for some $\bsh \in \Z^d$ then this is equivalent to $\prod_j
\bbone(h_j \equiv_p 0)$. With $\supp(\bsh)$ we mean the set of indices $j
\in \{1,\ldots,d\}$ for which $h_j \neq 0$.

\subsection{Weighted Korobov spaces}\label{section:function_space}

Let $\bsgamma = \{ \gamma_\setu \}_{\setu \subset \N}$ be a collection of
positive and finite weights. The weighted Korobov space \cite{NW2008,NW2010,SW2001} in $d$ dimensions
with smoothness parameter $\alpha > 1/2$ and weights $\bsgamma$ is defined by
$$
  \spaceH
  :=
  \left\{ f \in \calL_2([0,1]^d)  \st  \norm{f} < \infty \right\}
  ,
$$
with norm
$$
  \norm{f}
  :=
  \Bigg(\sum_{\bsh \in \Z^d} \ralpha^2(\bsh) \, |\widehat{f}(\bsh)|^2 \Bigg)^{1/2}
  ,
$$
where
\begin{align} \label{eq:r-def}
  \ralpha(\bsh)
  :=
  \gamma_{\supp(\bsh)}^{-1} \prod_{j \in \supp(\bsh)}  |h_j|^\alpha
  \qquad\text{and}\qquad
  \widehat{f}(\bsh)
  :=
  \int_{[0,1]^d} f(\bsx) \, \rme^{-2\pi \rmi \bsh\cdot \bsx} \rd \bsx
  ,
\end{align}
with $\bsh \cdot \bsx := \sum_{j=1}^d h_j x_j$.
In this situation the weights control how much each subset $\bsx_\fraku = (x_j)_{j \in \fraku}$ of the variables contributes to the norm of $f$ and, so long as these weights decay sufficiently fast, we can say something for the tractability of the integration problem for functions in these spaces. The weights here are defined for every possible subset of the variables and so we call these \emph{general weights}. Other systems of weights exist such as \emph{product weights}, \emph{POD weights} \cite{KSS2012} and \emph{SPOD weights} \cite{DKlGNS2014}. These forms of weights are specified by $O(d)$ parameters rather than~$O(2^d)$. In particular, product weights are given by $\gamma_\fraku = \prod_{j \in \fraku} \gamma_j$ based on a prescribed sequence $\{\gamma_j\}_{j=1}^\infty$. In Section~\ref{section:complexity_analysis}, we will focus on constructing and analysing our results for product weights.

Note that $\spaceH$ for $\alpha > 1/2$ consists of absolutely converging
Fourier series, since for $f \in \spaceH$ with $\alpha > 1/2$ we have
$$
  \sum_{\bsh \in \Z^d} |\widehat{f}(\bsh)|
  =
  \sum_{\bsh \in \Z^d} |\widehat{f}(\bsh)| \, \frac{\ralpha(\bsh)}{\ralpha(\bsh)}
  \le
  \norm{f} \, \Bigg(\sum_{\bsh \in \Z^d} \ralpha^{-2}(\bsh)\Bigg)^{1/2}
  <
  \infty
  ,
$$
where the sum over $\bsh$ can be rewritten in terms of the Riemann
zeta function $\zeta(2\alpha)$, which is finite provided that $2\alpha >
1$. Indeed, for $\lambda > 0$ we can write
\begin{equation}\label{eq:mu_quantity}
  \muquant
  :=
  \sum_{\bsh \in \Zdnull}  \ralpha^{-1/\lambda}(\bsh)
  =
  \sum_{\emptyset \neq \fraku \subseteq \{1,\ldots,d\}} \gamma_{\fraku}^{1/\lambda}
  \left(2\zeta(\alpha/\lambda) \right)^{|\fraku|},
\end{equation}
which is finite provided that $\alpha/\lambda > 1$, and therefore we
restrict $\lambda$ to the interval $\lambda \in [1/2, \alpha)$. This
quantity $\muquant$ will play an important role in our derivations below.

\begin{remark}
We note that the existence result in
Theorem~\ref{theorem:existence_of_optimal_vector} below will hold in
similarly defined function spaces if we change the definition of
$\ralpha$ in \eqref{eq:r-def} (with the same the definition of $\muquant$
in terms of $\ralpha$) as long as the new definition of $\ralpha$ also
satisfies
    $$
    \ralpha(\bsh) > 0
    \qquad
    \forall \bsh \in \Z^d
    \qquad
    \text{and}
    \qquad
    \muquant < \infty
    \qquad
    \forall \lambda \in [1/2,\alpha)
    ,
    $$
    and
    \begin{align}\label{eq:ralpha-property}
      \ralpha(n \, \bsh)
      &\geq
      n^\alpha \, \ralpha(\bsh)
      \qquad
      \text{whenever $n \in \mathbb{N}$ and $\bsh \in \Zdnull$}
      .
    \end{align}
    In particular, this includes the ``periodic Sobolev spaces of dominating mixed smoothness'' from \cite[Remark~1]{KKNU2019}.
\end{remark}

\subsection{The worst case error and the randomised error}\label{section:worst_case_errors}

Following \cite{NW2008}, for a deterministic algorithm
$A_{d,n}$, we define
its \emph{worst case error} for numerical integration in the space $\spaceH$ as
$$
  \edet{d}(A_{d,n})
  :=
  \sup_{\substack{f\in \spaceH \\ \norm{f} \leq 1}} \left| A_{d,n}(f)-I_d(f) \right|
  ,
$$
and for a randomised algorithm $A^\text{ran}_{d,n}$,
we define its \emph{randomised error}, which is
the \emph{worst case expected error}, as
$$
\eran{d}(A^\text{ran}_{d,n})
:=
\sup_{\substack{f \in \spaceH \\ \norm{f} \leq 1}}
\mathbb{E}\left[ \, | A^\text{ran}_{d,n}(f) - I_d(f) | \, \right]
,
$$
where the expectation is taken with respect to the random elements of
the algorithm.

Note that the randomised error of a deterministic algorithm $A_{d,n}$ corresponds to the usual deterministic worst case error.

\subsection{The deterministic setting}\label{section:deterministic_setting}

The following results are easy extensions of well-known results. Their
proofs are presented in the appendix for completeness. Due to the
absolutely converging Fourier series representation of functions in our
function space, since $\alpha > 1/2$, the error of the lattice
rule approximation~\eqref{eq:Q} can be written as follows \cite{SJ1994}
\begin{equation}\label{eq:lattice_rule_error}
  Q_{d,n,\bsz}(f)-I_d(f)
  =
  \sum_{\substack{\bsh \in \Zdnull \\ \bsh\cdot\bsz \equiv_n 0}} \widehat{f}(\bsh)
  .
\end{equation}
This then leads to the following explicit form of the worst case error.

\begin{proposition}\label{prop:edet}
    For $\alpha > 1/2$ we have
    \begin{equation*}
        \edet{d}(Q_{d,n,\bsz})
        =
        \Bigg(\sum_{\substack{\bsh\in \Zdnull \\ \bsh\cdot\bsz \equiv_n 0}} \ralpha^{-2}(\bsh)\Bigg)^{1/2}
        .
    \end{equation*}
\end{proposition}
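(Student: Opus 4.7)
The plan is to prove the identity by showing matching upper and lower bounds. The upper bound follows from the Cauchy--Schwarz inequality applied to the error representation~\eqref{eq:lattice_rule_error}: for $f\in\spaceH$ with $\norm{f}\le 1$, I would write each summand as $\widehat{f}(\bsh) = (\ralpha(\bsh)\widehat{f}(\bsh))\cdot\ralpha^{-1}(\bsh)$ and apply Cauchy--Schwarz over the index set $L_n(\bsz) := \{\bsh\in\Zdnull \st \bsh\cdot\bsz\equiv_n 0\}$, obtaining
$$
  |Q_{d,n,\bsz}(f)-I_d(f)|
  \;\le\;
  \norm{f}\,\Bigg(\sum_{\bsh\in L_n(\bsz)} \ralpha^{-2}(\bsh)\Bigg)^{1/2}.
$$
The sum on the right is dominated by $\mu_{d,\alpha,\bsgamma}(1/2)$ from~\eqref{eq:mu_quantity} and is therefore finite since $\alpha>1/2$. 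Taking the supremum over the unit ball of $\spaceH$ yields the ``$\le$'' direction of the claimed identity.

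For the matching lower bound, set $E := \bigl(\sum_{\bsh\in L_n(\bsz)} \ralpha^{-2}(\bsh)\bigr)^{1/2}$, which is strictly positive since, e.g., $(n,0,\ldots,0) \in L_n(\bsz)$, and consider the fooling function
$$
  f^\star(\bsx)
  \;:=\;
  \frac{1}{E}\sum_{\bsh\in L_n(\bsz)} \ralpha^{-2}(\bsh)\,\rme^{2\pi\rmi\bsh\cdot\bsx}.
$$
A direct computation from the definition of $\norm{\cdot}$ gives $\norm{f^\star}=1$, while substituting the Fourier coefficients of $f^\star$ into~\eqref{eq:lattice_rule_error} gives $Q_{d,n,\bsz}(f^\star)-I_d(f^\star) = E$, so the previous upper bound is actually attained at $f^\star$. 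Combining the two bounds gives the desired equality.

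There is no real obstacle here; the only detail needing a small check is that $f^\star$ genuinely lies in $\spaceH$, which reduces to absolute convergence of its Fourier series given $\sum_{\bsh\in L_n(\bsz)}\ralpha^{-2}(\bsh)<\infty$, via the same Cauchy--Schwarz trick already displayed in Section~\ref{section:function_space}. The argument would also carry over verbatim to any $\ralpha$ satisfying the conditions in the remark following~\eqref{eq:mu_quantity}, since only finiteness of $\mu_{d,\alpha,\bsgamma}(1/2)$ is used.
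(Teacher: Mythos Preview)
Your proposal is correct and follows essentially the same approach as the paper: Cauchy--Schwarz for the upper bound and an explicit extremal function for the lower bound. Your fooling function, which sums only over the dual lattice $L_n(\bsz)$ and is normalised to unit norm, is in fact slightly cleaner than the paper's choice (which sums over all of $\Zdnull$ and therefore does not strictly attain equality in the last step of \eqref{eq:Qub}); either way the argument goes through.
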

\begin{proof}
  See \hyperlink{proof:prop:edet}{appendix}.
\end{proof}

In Proposition~\ref{proposition:deterministic_bound_is_achieved}
below we will present the well-known result that there exist
generating vectors
which can achieve the optimal rate of convergence for the worst case error.

In order to prove this and later statements, the following lemma
is required. We remind the reader that we write $\bsh \equiv_p
\bszero$ to denote that every component of $\bsh$ is a multiple of~$p$,
i.e., $\bbone(\bsh \equiv_p \bszero) = \prod_j \bbone(h_j \equiv_p 0)$.

\begin{lemma}\label{lemma:averaging_over_vecs}
    For
    $\bsh \in \Z^d$ and prime $p$,
    $$
        \frac{1}{p^d}\sum_{\bsz \in \Z_p^d} \bbone(\bsh\cdot\bsz \equiv_p 0)
        =
        \frac{p-1}{p} \, \bbone(\bsh \equiv_p \bszero) + \frac{1}{p}
        .
    $$
    Furthermore, for any subset $\calZ \subseteq \Z_p^d$ with $|\calZ| \geq \ceil{\tau p^d}$ and $0 < \tau \le 1$, we have
    $$
    \frac{1}{|\calZ|}\sum_{\bsz \in \calZ} \bbone(\bsh\cdot\bsz \equiv_p 0)
    \leq
    \frac{\tau p - 1}{\tau p} \, \bbone(\bsh \equiv_p \bszero) + \frac{1}{\tau p}
    .
    $$
\end{lemma}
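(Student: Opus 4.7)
The plan is to split into two cases according to whether $\bsh \equiv_p \bszero$ or not, and exploit the fact that $\bsz \mapsto \bsh\cdot\bsz$ is a $\Z_p$-linear functional on $\Z_p^d$.

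First I would handle the equality. If $\bsh \equiv_p \bszero$, then $\bsh\cdot\bsz \equiv_p 0$ for every $\bsz \in \Z_p^d$, so the left-hand side equals $1$, which agrees with $\tfrac{p-1}{p} + \tfrac{1}{p}$. If $\bsh \not\equiv_p \bszero$, then some component $h_j$ is a unit in $\Z_p$ (using primality of $p$), so the map $\bsz \mapsto \bsh\cdot\bsz$ from $\Z_p^d$ onto $\Z_p$ is a surjective homomorphism whose kernel has size $p^{d-1}$. Hence the count of solutions divided by $p^d$ equals $1/p$, matching $\tfrac{p-1}{p}\cdot 0 + \tfrac{1}{p}$.

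For the inequality on a subset $\calZ$ with $|\calZ| \geq \lceil \tau p^d \rceil$, the two cases follow in the same way. When $\bsh \equiv_p \bszero$ the sum equals $|\calZ|$, so the ratio is $1 = \tfrac{\tau p-1}{\tau p} + \tfrac{1}{\tau p}$. When $\bsh \not\equiv_p \bszero$, I would use the trivial bound
$$
\sum_{\bsz \in \calZ} \bbone(\bsh\cdot\bsz \equiv_p 0) \;\le\; \sum_{\bsz \in \Z_p^d} \bbone(\bsh\cdot\bsz \equiv_p 0) \;=\; p^{d-1},
$$
and then divide by $|\calZ| \geq \tau p^d$ to obtain a ratio at most $1/(\tau p)$, which is exactly the right-hand side in that case.

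There is no real obstacle; the only slight subtlety is to note that primality of $p$ is used to ensure that a nonzero $h_j$ modulo $p$ is invertible in $\Z_p$, guaranteeing that the kernel of the linear form has the correct cardinality $p^{d-1}$. Both statements then follow directly by distinguishing the two cases.
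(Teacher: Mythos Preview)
Your proposal is correct and follows essentially the same approach as the paper: both split into the cases $\bsh \equiv_p \bszero$ and $\bsh \not\equiv_p \bszero$, count the solutions to $\bsh\cdot\bsz \equiv_p 0$ as $p^{d-1}$ in the latter case (you phrase it as the kernel of a surjective linear form, the paper solves for one coordinate explicitly, but these are the same argument), and then for the subset $\calZ$ bound the numerator by the full count $p^{d-1}$ and the denominator below by $\tau p^d$.
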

\begin{proof}
  See \hyperlink{proof:lemma:averaging_over_vecs}{appendix}.
\end{proof}

\begin{proposition}\label{proposition:deterministic_bound_is_achieved}
    For $\alpha > 1/2$, $\lambda \in [1/2,\alpha)$ and prime $p$, we have
    \begin{equation*}
        \frac{1}{p^d}\sum_{\bsz \in \Z_p^d} \left[\edet{d}(Q_{d,p,\bsz})\right]^{1/\lambda}
        \leq
        \frac{2}{p} \, \muquant
        ,
    \end{equation*}
where $\muquant$ is defined in \eqref{eq:mu_quantity}.
    Thus at least one vector $\bsz^\star \in \Z_p^d$ satisfies the bound
    \begin{equation}\label{eq:edet-at-least-one}
      \edet{d}(Q_{d,p,\bsz^\star})
      \leq
      \left( \frac{2}{p} \, \muquant \right)^{\lambda}
      \qquad \forall \lambda \in [1/2,\alpha)
      .
    \end{equation}
In particular, the vector $\bsz^\star$ which minimises
$\edet{d}(Q_{d,p,\bsz})$ over all $\bsz \in \Z_p^d $ achieves this bound.
\end{proposition}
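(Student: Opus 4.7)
The plan is to combine Proposition~\ref{prop:edet} with Lemma~\ref{lemma:averaging_over_vecs}. First I would raise the explicit formula $\edet{d}(Q_{d,p,\bsz}) = \bigl(\sum_{\bsh\in\Zdnull,\,\bsh\cdot\bsz\equiv_p 0}\ralpha^{-2}(\bsh)\bigr)^{1/2}$ to the power $1/\lambda$; the crucial observation is that, since $\lambda \geq 1/2$, the resulting outer exponent $1/(2\lambda)\leq 1$, so by subadditivity of $x\mapsto x^{1/(2\lambda)}$ on non-negative summands,
\[
  [\edet{d}(Q_{d,p,\bsz})]^{1/\lambda}
  \;\leq\;
  \sum_{\substack{\bsh \in \Zdnull \\ \bsh\cdot\bsz \equiv_p 0}} \ralpha^{-1/\lambda}(\bsh).
\]
This step is the heart of the argument: it converts a square-root norm into a sum of the same form as appears in the definition \eqref{eq:mu_quantity} of $\muquant$.

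Next I would average over $\bsz \in \Z_p^d$, interchange summation, and substitute the first identity of Lemma~\ref{lemma:averaging_over_vecs} into the resulting inner average. This yields the splitting
\[
  \frac{1}{p^d}\sum_{\bsz\in\Z_p^d} [\edet{d}(Q_{d,p,\bsz})]^{1/\lambda}
  \;\leq\;
  \frac{p-1}{p}\sum_{\substack{\bsh \in \Zdnull \\ \bsh \equiv_p \bszero}} \ralpha^{-1/\lambda}(\bsh)
  \,+\, \frac{1}{p}\,\muquant,
\]
where the second term already has the desired shape. For the first sum I would substitute $\bsh = p\bsh'$ and invoke $\ralpha(p\bsh') \geq p^\alpha \ralpha(\bsh')$, which is immediate from the definition~\eqref{eq:r-def} since every nonzero $\bsh'$ has $|\supp(\bsh')|\geq 1$. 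The sum is then bounded by $p^{-\alpha/\lambda}\muquant$, and since $\alpha/\lambda > 1$ gives $p^{-\alpha/\lambda} \leq 1/p$, the first term contributes at most $\muquant/p$, delivering the averaged bound $2\muquant/p$.

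For the existence claim, the usual averaging argument produces a vector achieving the bound for each fixed $\lambda$; to obtain one vector $\bsz^\star$ that works simultaneously for every $\lambda \in [1/2,\alpha)$ I would take $\bsz^\star$ to be the minimiser of $\edet{d}(Q_{d,p,\cdot})$, which is manifestly independent of $\lambda$. Since the minimum of a finite collection of non-negative numbers is at most their average, applying the averaged bound with each admissible $\lambda$ gives \eqref{eq:edet-at-least-one} for every such $\lambda$ at once. No step is really delicate; one only needs to keep track that $\lambda \geq 1/2$ is exactly what legitimises the subadditivity step and $\lambda < \alpha$ is exactly what ensures both $\muquant < \infty$ and $p^{-\alpha/\lambda} \leq 1/p$.
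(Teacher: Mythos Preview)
Your proposal is correct and follows essentially the same route as the paper: subadditivity of $x\mapsto x^{1/(2\lambda)}$ to linearise, then Lemma~\ref{lemma:averaging_over_vecs} to average over $\bsz$, then the substitution $\bsh\mapsto p\bsh$ combined with $\alpha/\lambda>1$ to control the $\bsh\equiv_p\bszero$ part, and finally taking the minimiser to get a $\lambda$-independent $\bsz^\star$. The only cosmetic difference is that the paper keeps the sharper factor $p^{-|\supp(\bsh)|\alpha/\lambda}$ before invoking $|\supp(\bsh)|\ge 1$, whereas you pass directly to $p^{-\alpha/\lambda}$; the resulting bound is the same.
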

\begin{proof}
  See \hyperlink{proof:proposition:deterministic_bound_is_achieved}{appendix}.
\end{proof}

It is important to realise that $\bsz^\star$ in the above statement can be chosen as the minimiser of~$\edet{d}(Q_{d,p,\bsz})$, which is independent of $\lambda$, and hence the bound~\eqref{eq:edet-at-least-one} holds for all $\lambda \in [1/2, \alpha)$.
Unfortunately the set $\Z_p^d$ is of size $p^d$ which is way too big to test them all.

 Therefore we turn to the component-by-component (CBC) construction,
see, e.g., \cite{K2003}, for which there are fast algorithms that can
find a good vector in e.g., $\calO(d \, n \ln n)$ operations in the case
of product weights, see \cite{NC2006-1,NC2006-2}. The CBC
construction argument relies on an induction over the dimension
$s = 2,\ldots,d$. If the first $s-1$ components have already
been constructed, $\bsz' = (z_1,\cdots,z_{s-1})$, and we have a good error
bound for $\edet{s-1}(Q_{s-1,p,\bsz'})$, then it is shown that the $s$th
component can be found such that $\edet{s}(Q_{s,p,\bsz})$ has a good error
bound. For this we split the squared worst case error into two terms
depending on whether or not $h_s = 0$:
\begin{align}\label{eq:det-cbc}
 \left[\edet{s}(Q_{s,p,\bsz})\right]^2
 = \sum_{\substack{\bsh\in\bbZ^s\setminus\{\bszero\} \\ \bsh\cdot\bsz\equiv_p 0}}
  \ralpha^{-2}(\bsh)
 = \underbrace{\sum_{\substack{\bsh'\in\bbZ^{s-1}\setminus\{\bszero\} \\ \bsh'\cdot\bsz'\equiv_p 0}}
   \ralpha^{-2}(\bsh')}_{\left[\edet{s-1}(Q_{s-1,p,\bsz'})\right]^2}
  \;+\; \underbrace{
   \sum_{\substack{\bsh\in\bbZ^s \\ h_s \ne 0 \\ \bsh\cdot\bsz\equiv_p 0}}
   \ralpha^{-2}(\bsh)}_{=:\, \theta_s^{(p)}(z_s)}
   .
\end{align}
The first term is precisely the squared worst case error in dimension
$s-1$ and it has the correct order of convergence from the induction
hypothesis. What remains is to show that the second term
\begin{align*}
 \theta^{(p)}_s(z_s)
 =
 \theta^{(p)}_{s,\alpha,\bsgamma,\bsz'}(z_s)
 &:=
 \sum_{\substack{\bsh\in\bbZ^s \\ h_s \ne 0 \\ \bsh\cdot\bsz\equiv_p 0}}
  \ralpha^{-2}(\bsh)
\end{align*}
has the correct order.
For completeness we show the equivalent result to
Proposition~\ref{proposition:deterministic_bound_is_achieved}.

\begin{proposition}\label{proposition:bound_on_theta}
For $\alpha > 1/2$, $\lambda \in [1/2,\alpha)$ and prime $p$, we have
$$
\frac{1}{p} \sum_{z_s \in \Z_p} \left[\theta_s^{(p)}(z_s)\right]^{1/(2\lambda)}
\leq
\frac{2}{p} \sum_{\substack{\bsh\in\bbZ^{s} \\ h_s \ne 0}} \ralpha^{-1/\lambda}(\bsh)
.
$$
Thus at least one $z_s^\star\in\bbZ_p$ satisfies
\begin{align*}
\theta_s^{(p)}(z_s^\star)
  \le
\bigg( \frac{2}{p} \sum_{\substack{\bsh \in\bbZ^s \\ h_s\ne 0}}
\ralpha^{-1/\lambda}(\bsh) \bigg)^{2\lambda}
\qquad\forall\, \lambda \in [1/2,\alpha).
\end{align*}
In particular, the $z_s^\star$ which minimises $\theta_s^{(p)}(z_s)$
over all $z_s\in\bbZ_p$ achieves this bound.
\end{proposition}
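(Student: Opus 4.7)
The plan is to mirror the argument of \RefProp{proposition:deterministic_bound_is_achieved} but restricted to the last component, with $\bsz' = (z_1,\ldots,z_{s-1})$ held fixed. The first move pulls the outer power inside the sum: since $\lambda \geq 1/2$ we have $1/(2\lambda) \leq 1$, so by the elementary inequality $(\sum_k a_k)^q \leq \sum_k a_k^q$ valid for nonnegative $a_k$ with $q \in (0,1]$,
\begin{equation*}
  [\theta_s^{(p)}(z_s)]^{1/(2\lambda)}
  \;\leq\;
  \sum_{\substack{\bsh \in \Z^s \\ h_s \ne 0 \\ \bsh\cdot\bsz \equiv_p 0}} \ralpha^{-1/\lambda}(\bsh).
\end{equation*}

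Next I would average this bound over $z_s \in \Z_p$, swap the order of summation, and analyse the resulting inner counting sum $(1/p)\sum_{z_s\in\Z_p}\bbone(\bsh\cdot\bsz \equiv_p 0)$ by cases on $h_s$ modulo $p$. Writing $\bsh\cdot\bsz = \bsh'\cdot\bsz' + h_s z_s$, if $h_s \not\equiv_p 0$ then by primality of $p$ exactly one $z_s \in \Z_p$ solves the congruence, so the inner sum equals $1/p$; if $h_s \equiv_p 0$ (so that $|h_s| \geq p$, since $h_s \neq 0$) then the indicator is independent of $z_s$ and is at most $1$. In the second case, writing $h_s = p\, h_s'$ with $h_s' \neq 0$ leaves the support of $\bsh$ unchanged, so the definition of $\ralpha$ gives directly $\ralpha^{-1/\lambda}(h_1,\ldots,h_{s-1},p h_s') = p^{-\alpha/\lambda}\, \ralpha^{-1/\lambda}(h_1,\ldots,h_{s-1},h_s')$; a reindexing then yields
\begin{equation*}
  \sum_{\substack{\bsh\in\Z^s \\ h_s\ne 0 \\ h_s\equiv_p 0}} \ralpha^{-1/\lambda}(\bsh)
  \;=\;
  p^{-\alpha/\lambda}
  \sum_{\substack{\bsh\in\Z^s \\ h_s\ne 0}} \ralpha^{-1/\lambda}(\bsh).
\end{equation*}
Combining the two cases produces a prefactor $\tfrac{1}{p} + p^{-\alpha/\lambda}$, and since $\alpha/\lambda > 1$ we have $p^{-\alpha/\lambda} \leq 1/p$, which closes the bound with the claimed constant $2/p$.

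The final two sentences of the proposition follow by the standard averaging argument: if a uniform average of nonnegative quantities is bounded by $B$, then at least one summand is bounded by $B$; raising to the $2\lambda$ power gives the stated bound on $\theta_s^{(p)}(z_s^\star)$. Crucially, the minimiser of $\theta_s^{(p)}$ over $\Z_p$ does not depend on $\lambda$, so that same $z_s^\star$ realises the bound simultaneously for every $\lambda \in [1/2, \alpha)$.

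The only delicate step is the $h_s \equiv_p 0$ case, where the uniform $1/p$ estimate from \RefLem{lemma:averaging_over_vecs} is unavailable and one must instead exploit the growth of $\ralpha$ under multiplication of a coordinate by $p$. This is precisely where the restriction $\lambda < \alpha$ is essential, since $\alpha/\lambda > 1$ is exactly what makes $p^{-\alpha/\lambda}$ dominated by $p^{-1}$ and allows the two contributions to be absorbed into the clean constant $2$.
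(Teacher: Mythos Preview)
Your proof is correct and follows essentially the same route as the paper's: apply subadditivity to move the power $1/(2\lambda)$ inside, split according to whether $h_s \equiv_p 0$, use that exactly one $z_s$ solves the congruence when $h_s \not\equiv_p 0$, and in the other case substitute $h_s = p h_s'$ to extract $p^{-\alpha/\lambda} \le 1/p$. The only cosmetic difference is that the paper retains the condition $\bsh'\cdot\bsz' \equiv_p 0$ through one additional step before dropping it, whereas you bound the indicator by~$1$ immediately; this has no effect on the final bound.
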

\begin{proof}
  See \hyperlink{proof:proposition:bound_on_theta}{appendix}.
\end{proof}

\subsection{The randomised setting}\label{section:randomised_algorithms}

For the randomised algorithms, we define the set of primes from which the number of sample points will be chosen to be
\begin{align*}
  \Pn
  &:=
  \left\{ \frac{n}{2} < p \le n : p \text{ prime} \right\}
  .
\end{align*}
For $n \ge 2$, we can bound the size of this set from below:
\begin{equation}\label{eq:PNT_lowerbound}
|\Pn|
>
\frac{\constPn \, n}{\ln n}
\quad \text{for some } 0.23 < \constPn < 0.5
,
\end{equation}
where $\constPn$ converges to $0.5$ as $n$ increases, see
\cite{lvP1896,lvP1899}. The bound $0.23$ is verified with results from \cite{D1998} and $\constPn$ is greater than $0.4$ for all $n>40$.

We now formally define the set of ``good generating vectors'' and the
set of ``good components'' as used in
Algorithms~\ref{alg:RPRV}--\ref{alg:RPFV} in the Introduction.

For a given prime $p$ and a parameter $\tau \in (0, 1)$, we
define the set of ``good generating vectors'' (for a lattice rule with $p$
points) to be
\begin{equation}\label{eq:good_sets}
  G_\tau^{(p)}
  =
  G_{d,\alpha,\bsgamma,\tau}^{(p)}
  :=
  \left\{ \bsz \in \Z_p^d  \st \edet{d}(Q_{d,p,\bsz}) \leq \inf_{\lambda \in [1/2,\alpha)}\left[\left(\frac{2}{(1-\tau) \, p} \, \muquant \right)^{\lambda}\right] \right\}
  ,
\end{equation}
where $\muquant$ is defined in \eqref{eq:mu_quantity}.
As we will see later in Lemma~\ref{lemma:good_sets_are_large}, the
size of this set increases with the parameter~$\tau$.

Algorithm~\ref{alg:RPRV} ``random-prime--random-vector'' can
hence be written as
\begin{align}\label{eq:ran-p-z}
  \randprandzstar(f,\omega)
  :=
  Q_{d,p(\omega),\bsz(\omega)}(f)
  \qquad&\text{with }
  p \sim \calU(\Pn) \text{ and then } \bsz \sim \calU\big( G_\tau^{(p)} \big),
\end{align}
where $p \sim \calU(\Pn)$ means that $p$ is uniformly at random selected from $\Pn$ and, likewise, $\bsz$ is uniformly at random selected from $G_\tau^{(p)}$ for the given~$p$.

For a given prime $p$ and a parameter $\tau \in (0,1)$, and for each
$s = 1,\ldots,d$ and with given $\bsz' \in \Z^{s-1}$, we define the
set of ``good components'' (for dimension $s$ of a lattice rule with $p$
points) to be
\begin{align}\label{eq:Gtilde}
  \nonumber
  \widetilde{G}_{s,\tau}^{(p)}
 = \widetilde{G}_{s,\tau,\bsz'}^{(p)}
 &\hphantom{:}= \widetilde{G}_{s,\alpha,\bsgamma,\tau,\bsz'}^{(p)}
 \\
 &:= \Biggr\{
 z_s\in\bbZ_p \st
 \theta^{(p)}_{s,\alpha,\bsgamma,\bsz'}(z_s)
 \le \inf_{\lambda \in [1/2,\alpha)}
 \bigg( \frac{2}{(1-\tau) \, p} \sum_{\substack{\bsh \in\bbZ^d \\ h_s \ne 0}}
 \ralpha^{-1/\lambda}(\bsh) \bigg)^{2\lambda}
 \Biggr\}
 .
\end{align}
The sets $\widetilde{G}_{s,\tau}^{(p)}$ are built to facilitate a CBC
construction; the sets depend on the previously constructed components
$\bsz' = (z_1,\ldots,z_{s-1})$ and consist of the good options for the
component $z_s$ of dimension $s$.

Algorithm~\ref{alg:RPRCBCV} ``random-prime--random-CBC-vector''
can hence be written as
\begin{align}\label{eq:ran-p-cbc-z}
  \randprandCBCzstar(f,\omega)
  :=
  Q_{d,p(\omega),\bsz(\omega)}(f)
  \qquad&\text{with }
  p \sim \calU(\Pn), \; z_1 = 1, \\\nonumber
  & \text{and then } z_2 \sim \calU\big( \widetilde{G}_{2,\tau,(z_1)}^{(p)} \big), \\\nonumber
  & \;\;\,\vdots \\\nonumber
  & \text{and then } z_d \sim \calU\big( \widetilde{G}_{d,\tau,(z_1,z_2,\ldots,z_{d-1})}^{(p)} \big)
  .
\end{align}
Finally, our new Algorithm~\ref{alg:RPFV}
``random-prime--fixed-vector'' is simply given by
\begin{align}\label{eq:ran-p}
  \randpstarzstar(f,\omega)
  :=
  Q_{d,p(\omega),\bsz^\star}(f)
  \qquad&\text{with }
  p \sim \calU(\Pn)
  .
\end{align}

We note that our definitions of Algorithms~\ref{alg:RPRV}
and~\ref{alg:RPRCBCV} here
are slightly different from the respective publications \cite{KKNU2019}
and \cite{DGS2022}. In \cite{KKNU2019} the set of good generating
vectors was defined in terms of the Zaremba index such that it can also
be used for $0 < \alpha \le 1/2$. However, we here choose to define the
set in terms of the worst case error as this is a computable criterion for
a CBC algorithm. We also allow the components of the generating
vector for the $p$-point lattice rule to be any element of $\Z_p$
instead of restricting them to be relatively prime to $p$. This has no
significant effect on the error bounds. Similarly, the range of $\Pn$
is slightly differently defined, but this is inconsequential. In
\cite{DGS2022} the set of good components was defined as the first
$\left\lceil \tau p \right\rceil$ components when ordered in terms of
increasing $\theta_s^{(p)}$ value. Here we define the set of good
components as those $z_s^{(p)}$ that deliver sufficiently small
$\theta_s^{(p)}$ value; this definition is more convenient for error
analysis. The definition in \cite{DGS2022} is advantageous for the actual
implementation and we will also make use of such a point of view for
Algorithm~\ref{alg:constructing_z}.

\subsection{New proof for the error bound on $\randprandzstarsubsec$}

In this subsection we provide a new proof for the error bound
of Algorithm~\ref{alg:RPRV} ``random-prime--random-vector'',
$\randprandzstar$, see~\eqref{eq:ran-p-z}. We extend the result
in~\cite{KKNU2019} from product weights to general weights.

In the definition of $G_\tau^{(p)} = G_{d,\alpha,\bsgamma,\tau}^{(p)}$, see~\eqref{eq:good_sets}, the parameter $\tau$ allows us to relax the bound~\eqref{eq:edet-at-least-one} achieved by Proposition~\ref{proposition:deterministic_bound_is_achieved} and therefore ensures the set can be made sufficiently large as shown in the following result.

\begin{lemma}\label{lemma:good_sets_are_large}
    For prime $p$ and $\tau\in (0,1)$, the cardinality of the set $G_\tau^{(p)}$ defined in \eqref{eq:good_sets} satisfies
    $$
    |G_\tau^{(p)}|
    \geq
    \ceil{\tau p^d}
    \ge 1
    .
    $$
\end{lemma}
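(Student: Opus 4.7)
The plan is to apply a Markov-type counting argument to the averaging bound already established in Proposition~\ref{proposition:deterministic_bound_is_achieved}. For brevity, write $B(\lambda,\tau) := \left(\tfrac{2}{(1-\tau)\,p}\,\muquant\right)^\lambda$ for the threshold sitting inside the infimum in~\eqref{eq:good_sets}. Multiplying Proposition~\ref{proposition:deterministic_bound_is_achieved} through by $p^d$ gives $\sum_{\bsz \in \Z_p^d} [\edet{d}(Q_{d,p,\bsz})]^{1/\lambda} \le 2\, p^{d-1}\, \muquant$. For any fixed $\lambda \in [1/2,\alpha)$, Markov's inequality applied to this sum shows that at most $(1-\tau)\, p^d$ vectors can satisfy $\edet{d}(Q_{d,p,\bsz}) \ge B(\lambda,\tau)$. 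Passing to the integer bound via the floor, at most $\lfloor (1-\tau)\, p^d \rfloor$ vectors are ``bad'', and hence at least $p^d - \lfloor (1-\tau)\, p^d \rfloor = \lceil \tau\, p^d \rceil$ vectors satisfy $\edet{d}(Q_{d,p,\bsz}) < B(\lambda,\tau)$.

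Next I would sort the $p^d$ worst case errors in non-decreasing order and let $e^{(k)}$ denote the $k$-th smallest value. With $k := \lceil \tau\, p^d \rceil$, the conclusion of the previous paragraph reads $e^{(k)} < B(\lambda,\tau)$ for every $\lambda \in [1/2,\alpha)$, because at least $k$ of the sorted values lie strictly below $B(\lambda,\tau)$. Since $e^{(k)}$ is a fixed number independent of $\lambda$, taking the infimum over $\lambda$ yields $e^{(k)} \le \inf_{\lambda \in [1/2,\alpha)} B(\lambda,\tau)$, which is precisely the threshold defining $G_\tau^{(p)}$ in~\eqref{eq:good_sets}. Consequently, the $k$ vectors producing the smallest worst case errors all lie in $G_\tau^{(p)}$, proving $|G_\tau^{(p)}| \ge \lceil \tau\, p^d \rceil$. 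Since $\tau \in (0,1)$ forces $\lceil \tau\, p^d \rceil \ge 1$, the second inequality is immediate.

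The only mild subtlety to watch for is that the threshold in $G_\tau^{(p)}$ is an infimum over $\lambda$, so a one-shot application of Markov for a single $\lambda$ does not directly count vectors in $G_\tau^{(p)}$: the infimum yields a smaller (or at best equal) threshold than any individual $B(\lambda,\tau)$. The order-statistic detour outlined above sidesteps exactly this point by extracting the $\lambda$-independent quantity $e^{(k)}$, which is bounded by $B(\lambda,\tau)$ uniformly in $\lambda$, so the bound transfers to the infimum without loss. Beyond this observation, no analytical difficulty remains, and I expect no further obstacles.
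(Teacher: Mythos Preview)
Your proof is correct and follows the same core strategy as the paper: apply Markov's inequality to the averaging bound of Proposition~\ref{proposition:deterministic_bound_is_achieved}, yielding at least $\lceil \tau p^d\rceil$ vectors below the threshold $B(\lambda,\tau)$ for each fixed~$\lambda$, and then pass to the infimum over~$\lambda$.

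The only noteworthy difference lies in how the infimum over~$\lambda$ is handled. The paper argues that since $B(\lambda,\tau)\to\infty$ as $\lambda\to\alpha$, the infimum is actually a minimum, attained at some $\lambda^\star\in[1/2,\alpha)$; applying Markov at this particular $\lambda^\star$ then immediately gives $\lceil\tau p^d\rceil$ vectors satisfying the defining inequality of $G_\tau^{(p)}$. Your order-statistic detour via the $\lambda$-independent quantity $e^{(k)}$ achieves the same end without needing to verify that the infimum is attained (which implicitly requires a continuity argument for $\lambda\mapsto\muquant$). Both routes are short; yours is arguably a touch cleaner on this technical point, while the paper's is marginally more direct once attainment is granted.
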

\begin{proof}
  See \hyperlink{proof:lemma:good_sets_are_large}{appendix}.
\end{proof}

\begin{lemma}\label{lemma:lower_bound_on_r}
    For prime $p$, $\tau\in (0,1)$, $\bsz \in G_\tau^{(p)}$ and $\bsh \in \Z_d \setminus \{\bszero\}$ satisfying $\bsh \cdot \bsz \equiv_p 0$, we have
    $$
      \ralpha(\bsh)
      \geq
       \sup_{\lambda \in [1/2,\alpha)}
       p^{\lambda} \left[\frac{2}{1-\tau} \, \muquant \right]^{-\lambda}
       ,
    $$
where $\muquant$ is defined in \eqref{eq:mu_quantity}. Furthermore,
if
    $p \in \Pn$ then
    $$
      \ralpha(\bsh)
      >
      B_{n,\tau}
    $$
    where
    \begin{align}\label{eq:B_n}
      B_{n,\tau}
      =
      B_{d,\alpha,\bsgamma,n,\tau}
      :=
      \sup_{\lambda \in [1/2,\alpha)} n^{\lambda} \left[\frac{4}{1-\tau} \, \muquant \right]^{-\lambda}
      .
    \end{align}
\end{lemma}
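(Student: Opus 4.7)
My plan is to combine two ingredients: the explicit worst case error formula from Proposition~\ref{prop:edet}, and the defining inequality of the set $G_\tau^{(p)}$ from \eqref{eq:good_sets}.

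The first step is a single-term bound. By Proposition~\ref{prop:edet}, $[\edet{d}(Q_{d,p,\bsz})]^2$ equals the sum of $\ralpha^{-2}(\bsh')$ over all $\bsh' \in \Zdnull$ with $\bsh' \cdot \bsz \equiv_p 0$. The given $\bsh$ is one such index, and every summand is non-negative, so $\ralpha^{-2}(\bsh) \leq [\edet{d}(Q_{d,p,\bsz})]^2$, equivalently $\ralpha(\bsh) \geq 1/\edet{d}(Q_{d,p,\bsz})$.

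The second step plugs in the membership $\bsz \in G_\tau^{(p)}$: by \eqref{eq:good_sets}, for every $\lambda \in [1/2,\alpha)$,
$$
 \edet{d}(Q_{d,p,\bsz})
 \;\le\;
 \left(\frac{2}{(1-\tau)\,p}\,\muquant\right)^{\lambda}.
$$
Inverting and combining with the bound from the first step gives, for each such $\lambda$,
$$
 \ralpha(\bsh)
 \;\ge\;
 \left(\frac{(1-\tau)\,p}{2\,\muquant}\right)^{\lambda}
 \;=\;
 p^{\lambda}\left[\frac{2}{1-\tau}\,\muquant\right]^{-\lambda}.
$$
Taking the supremum over $\lambda \in [1/2,\alpha)$ on the right-hand side yields the first stated inequality.

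For the second claim, suppose $p \in \Pn$, so $p > n/2$ strictly. Writing $p = (n/2)\cdot(2p/n)$ with $2p/n > 1$, we have $(2p/n)^{\lambda} \ge (2p/n)^{1/2}$ for every $\lambda \ge 1/2$, a factor independent of $\lambda$ and strictly greater than one. Pulling this factor out of the supremum and absorbing the remaining $2^{\lambda}$ into the bracket via $(n/2)^{\lambda}\cdot 2^{-\lambda} = n^{\lambda}\cdot 4^{-\lambda}$ converts $[2/(1-\tau)\,\muquant]^{-\lambda}$ into $[4/(1-\tau)\,\muquant]^{-\lambda}$ and produces the strict inequality $\ralpha(\bsh) > B_{n,\tau}$. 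The only subtle point is preserving strictness after taking the supremum, which is why we factor out a $\lambda$-independent constant $(2p/n)^{1/2} > 1$ rather than comparing pointwise in $\lambda$; the rest is bookkeeping.
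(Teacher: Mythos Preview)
Your argument is correct and follows essentially the same route as the paper: bound a single term of the worst case error sum from Proposition~\ref{prop:edet} by the full sum, then apply the defining inequality of $G_\tau^{(p)}$, and finally use $p>n/2$ for the second claim. The paper's proof simply asserts that the strict inequality ``follows from $p>n/2$'' without further comment, whereas your factoring-out of the $\lambda$-independent constant $(2p/n)^{1/2}>1$ is a careful way to guarantee that strictness survives the supremum; this is a nice touch, not a different method.
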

\begin{proof}
  See \hyperlink{proof:lemma:lower_bound_on_r}{appendix}.
\end{proof}

In the following proposition we provide an expression for the randomised
error of $\randprandzstar$. An upper bound was already given in
\cite{KKNU2019} for product weights and we extend it to general
weights.

The condition $\ralpha(\bsh) > B_{n,\tau}$ under the sum
in~\eqref{eq:eran-M} does not need to be stated explicitly, since it
follows automatically from Lemma~\textup{\ref{lemma:lower_bound_on_r}} and
the definition of~$\omega_{n,\tau}$ in \eqref{eq:omega}, but it will be of
direct use in proving Theorem~\textup{\ref{thm:M-error-bound}}
later.

\begin{proposition}\label{proposition:M_error}
    For $\alpha > 1/2$ and $n \geq 2$ we have
    \begin{align}\label{eq:eran-M}
    \eran{d}(\randprandzstar)
    &=
    \Bigg(\sum_{\substack{\bsh \in \Zdnull \\ \ralpha(\bsh) > B_{n,\tau}}}\omega_{n,\tau}^2(\bsh) \, \ralpha^{-2}(\bsh)\Bigg)^{1/2},
    \end{align}
    with
    \begin{align} \label{eq:omega}
    \omega_{n,\tau}(\bsh)
    =
    \omega_{d,\alpha,\bsgamma,n,\tau}(\bsh)
    :=
    \frac{1}{|\Pn|}\sum_{p \in \Pn} \frac{1}{|G_\tau^{(p)}|}\sum_{\bsz \in G_\tau^{(p)}} \bbone(\bsh\cdot\bsz \equiv_p 0)
    ,
    \end{align}%
    where $G_\tau^{(p)} = G_{d,\alpha,\bsgamma,\tau}^{(p)}$ is defined
    in \eqref{eq:good_sets}.
\end{proposition}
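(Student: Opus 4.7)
The plan is to prove~\eqref{eq:eran-M} as a matching pair of bounds, an upper one via Cauchy--Schwarz and a lower one via an explicit worst-case fooling function. The preparation is to use the lattice error formula~\eqref{eq:lattice_rule_error}: for each realisation $(p,\bsz)$,
\begin{equation*}
Q_{d,p,\bsz}(f) - I_d(f) \;=\; \sum_{\bsh \in \Zdnull} \bbone(\bsh\cdot\bsz \equiv_p 0)\,\widehat{f}(\bsh),
\end{equation*}
and, taking expectation with $p \sim \calU(\Pn)$ and $\bsz \sim \calU\big(G_\tau^{(p)}\big)$ (well-defined since $G_\tau^{(p)}$ is non-empty by Lemma~\ref{lemma:good_sets_are_large}), one recognises $\omega_{n,\tau}(\bsh) = \mathbb{E}\big[\bbone(\bsh\cdot\bsz \equiv_p 0)\big]$ from~\eqref{eq:omega}.

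For the upper bound I would apply the triangle inequality, interchange sum and expectation (justified by the absolute convergence of Fourier series on $\spaceH$ for $\alpha > 1/2$ noted in Section~\ref{section:function_space}), and then invoke Cauchy--Schwarz with the weighting $\ralpha(\bsh)\cdot\ralpha^{-1}(\bsh)$:
\begin{equation*}
\mathbb{E}|Q - I|
\;\le\; \sum_{\bsh \in \Zdnull} \omega_{n,\tau}(\bsh)\,|\widehat{f}(\bsh)|
\;\le\; \norm{f}\,\Bigg(\sum_{\bsh \in \Zdnull} \omega_{n,\tau}^2(\bsh)\,\ralpha^{-2}(\bsh)\Bigg)^{1/2}.
\end{equation*}
Lemma~\ref{lemma:lower_bound_on_r} forces $\omega_{n,\tau}(\bsh)=0$ whenever $\ralpha(\bsh) \le B_{n,\tau}$, so the restriction in~\eqref{eq:eran-M} is automatic, and the sum is finite via~\eqref{eq:mu_quantity}. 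Taking the supremum over $\norm{f} \le 1$ yields the ``$\le$'' direction of~\eqref{eq:eran-M}.

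For the matching lower bound I would exhibit a fooling function engineered to saturate both inequalities above simultaneously: Cauchy--Schwarz equality requires $|\widehat{f}(\bsh)|\,\ralpha(\bsh) \propto \omega_{n,\tau}(\bsh)\,\ralpha^{-1}(\bsh)$, and triangle-inequality equality needs all summands of $Q - I$ to share a common sign. Setting $S := \sum_{\bsh \in \Zdnull} \omega_{n,\tau}^2(\bsh)\,\ralpha^{-2}(\bsh)$, the natural candidate is $f^{*}$ with non-negative Fourier coefficients
\begin{equation*}
\widehat{f^{*}}(\bsh) := S^{-1/2}\,\omega_{n,\tau}(\bsh)\,\ralpha^{-2}(\bsh) \text{ for } \bsh \in \Zdnull, \qquad \widehat{f^{*}}(\bszero) := 0.
\end{equation*}
A short calculation gives $\norm{f^{*}} = 1$, and since every $\widehat{f^{*}}(\bsh) \ge 0$, the error $Q_{d,p,\bsz}(f^{*}) - I_d(f^{*}) \ge 0$ for every realisation, so the absolute value is redundant and linearity of expectation yields $\mathbb{E}|Q(f^{*}) - I(f^{*})| = S^{1/2}$, matching the upper bound. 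I do not expect a hard obstacle here: the only genuine insight is the simultaneous-tightness construction of $f^{*}$; the remaining verifications (finiteness of $S$, $\norm{f^{*}}=1$, Fubini-style interchange) are routine given the absolute convergence already established.
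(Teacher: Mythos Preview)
Your proposal is correct and follows essentially the same route as the paper: an upper bound via triangle inequality and Cauchy--Schwarz, and a matching lower bound via the fooling function with Fourier coefficients proportional to $\omega_{n,\tau}(\bsh)\,\ralpha^{-2}(\bsh)$, with the restriction $\ralpha(\bsh)>B_{n,\tau}$ coming for free from Lemma~\ref{lemma:lower_bound_on_r}. If anything, you are slightly more explicit than the paper in arguing why the triangle inequality is tight (non-negative Fourier coefficients force $Q_{d,p,\bsz}(f^{*})-I_d(f^{*})\ge 0$ for every realisation), whereas the paper simply asserts that equality is attained.
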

\begin{proof}
    For an arbitrary function $f \in \spaceH$, we have from
    \eqref{eq:lattice_rule_error}
    \begin{align}
        \nonumber
        &\frac{1}{|\Pn|} \sum_{p \in \Pn}
        \frac{1}{|G_\tau^{(p)}|} \sum_{\bsz \in G_\tau^{(p)}}
        |Q_{d,p,\bsz}(f)-I_d(f)|
        =
        \frac{1}{|\Pn|} \sum_{p \in \Pn}
        \frac{1}{|G_\tau^{(p)}|} \sum_{\bsz \in G_\tau^{(p)}}
        \bigg|
        \sum_{\substack{\bsh \in \Zdnull \\ \bsh \cdot \bsz \equiv_p 0}}
        \widehat{f}(\bsh)
        \bigg|
        \\\nonumber
        &\hspace{10mm}\leq
        \frac{1}{|\Pn|} \sum_{p \in \Pn}
        \frac{1}{|G_\tau^{(p)}|}
        \sum_{\bsz \in G_\tau^{(p)}}
        \sum_{\substack{\bsh \in \Zdnull \\ \bsh \cdot \bsz \equiv_p 0}}
        |\widehat{f}(\bsh)|
        =
        \sum_{\bsh \in \Zdnull}
        \omega_{n,\tau}(\bsh) \, |\widehat{f}(\bsh)|
        \\\label{eq:Qranpzub}
        &\hspace{10mm}\leq
        \bigg(
          \sum_{\bsh \in \Zdnull}
          \omega_{n,\tau}^2(\bsh) \, \ralpha^{-2}(\bsh)
        \bigg)^{1/2}
        \norm{f}
        .
    \end{align}
It follows that $\eran{d}(\randprandzstar)$ is bounded from above by
the expression multiplying $\norm{f}$ in~\eqref{eq:Qranpzub}. Since
equality in \eqref{eq:Qranpzub} is attained by the function
    $$
      f(\bsx) = \sum_{\bsh \in \Zdnull}
      \omega_{n,\tau}(\bsh) \, \ralpha^{-2}(\bsh) \, \rme^{2 \pi \rmi \bsh \cdot \bsx}
      \qquad
      \in \qquad \spaceH
      ,
    $$
we then conclude that $\eran{d}(\randprandzstar)$ is exactly the
expression multiplying $\norm{f}$ in \eqref{eq:Qranpzub}.
The condition $\ralpha(\bsh) > B_{n,\tau}$ under the sum
in~\eqref{eq:eran-M} follows automatically from
Lemma~\textup{\ref{lemma:lower_bound_on_r}} and the definition
of~$\omega_{n,\tau}$ in \eqref{eq:omega}.
\end{proof}

The following lemma gives an upper bound on $\omega_{n,\tau}(\bsh)$
in terms of $\ralpha(\bsh)$. The proof technique is similar to \cite[proof
of Theorem~9]{KKNU2019} but we allow general weights and do not require
them to be bounded by~$1$. The proof does not explicitly use the
definition of the $G_\tau^{(p)}$ sets, and it is possible to allow $\tau =
1$ with the interpretation that in this case $G_\tau^{(p)} = \Z_p^d$ in
the definition of~$\omega_{n,\tau}$. Moreover, the result also holds for
any $\alpha > 0$.

\begin{lemma}\label{lemma:omega_bound}
Suppose the weights satisfy $\gamma_\setu \le \Gamma < \infty$ for all
$\setu\subset\bbN$. For $\alpha
> 0$, $n \ge 4$, $\tau \in (0, 1]$, and $\bsh \in \Z^d \setminus
\{\bszero\}$, the quantity $\omega_{n,\tau}(\bsh)$ defined in
\eqref{eq:omega} satisfies
  \begin{align} \label{eq:omega_bound}
    \omega_{n,\tau}(\bsh)
    =
    \omega_{d,\alpha,\bsgamma,n,\tau}(\bsh)
    \le
    C_{\tau,\delta} \,
    \frac{\ralpha^{\delta/\lambda}(\bsh)}{n}
    \qquad
    \text{for all } 0 < \delta \le \lambda \le \alpha
  \end{align}
  with $C_{\tau,\delta} = \left( \frac{2}\tau + \frac{2}{\constPn \delta} \right)
  \Gamma$, where $c'$ is as in \eqref{eq:PNT_lowerbound}.
\end{lemma}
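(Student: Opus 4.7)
My plan begins by applying Lemma~\ref{lemma:averaging_over_vecs} to the inner average over $\bsz\in G_\tau^{(p)}$; its cardinality hypothesis is provided by Lemma~\ref{lemma:good_sets_are_large}, which gives $|G_\tau^{(p)}|\ge\ceil{\tau p^d}$. This immediately yields the decomposition
\begin{align*}
  \omega_{n,\tau}(\bsh)
  \;\le\;
  \frac{1}{|\Pn|}\sum_{p\in\Pn}\bbone(\bsh\equiv_p\bszero)
  \;+\;
  \frac{1}{|\Pn|}\sum_{p\in\Pn}\frac{1}{\tau p}.
\end{align*}
For the second sum I use $p>n/2$ to bound it by $2/(\tau n)$, and then combine with the elementary inequality $\ralpha(\bsh)\ge 1/\gamma_{\supp(\bsh)}\ge 1/\Gamma$ (so that $1\le\Gamma^{\delta/\lambda}\ralpha^{\delta/\lambda}(\bsh)$) to obtain the $(2\Gamma/\tau)\,\ralpha^{\delta/\lambda}(\bsh)/n$ contribution to the claimed bound.

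The first (and main) sum requires a number-theoretic argument. Whenever $\bbone(\bsh\equiv_p\bszero)=1$, the prime $p$ divides $g:=\gcd\{|h_j|:j\in\supp(\bsh)\}$; since every $p\in\Pn$ exceeds $n/2$, at most $k$ such primes can coexist where $(n/2)^k\le g$, i.e.\ $k\le\ln g/\ln(n/2)$. Plugging in $|\Pn|>\constPn\,n/\ln n$ from \eqref{eq:PNT_lowerbound} and using $\ln n/\ln(n/2)\le 2$ for $n\ge 4$ reduces the task to bounding $\ln g$. To convert $\ln g$ into the required power $\ralpha^{\delta/\lambda}(\bsh)$ and to extract the $1/\delta$ factor in the constant, I apply the elementary inequality $\ln x\le x^t/t$ (valid for $x\ge 1$ and $t>0$) with $t=\alpha\delta/\lambda$, giving $\ln g\le(\lambda/(\alpha\delta))\,g^{\alpha\delta/\lambda}$. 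Finally, because $|h_j|\ge g$ for each $j\in\supp(\bsh)$ and $\gamma_{\supp(\bsh)}\le\Gamma$, I have $g^\alpha\le\Gamma\,\ralpha(\bsh)$, and hence $g^{\alpha\delta/\lambda}\le\Gamma^{\delta/\lambda}\ralpha^{\delta/\lambda}(\bsh)$. Dropping the factor $\lambda/\alpha\le 1$ and adding both contributions assembles the constant $C_{\tau,\delta}=(2/\tau+2/(\constPn\delta))\Gamma$.

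The main obstacle I expect is recovering the uniform $1/n$ decay across \emph{all} admissible $\delta\le\lambda\le\alpha$. A naive pointwise-in-$p$ estimate $\bbone(\bsh\equiv_p\bszero)\le\Gamma^{\delta/\lambda}\ralpha^{\delta/\lambda}(\bsh)/p^{\alpha\delta/\lambda}$ (which follows from $\ralpha(\bsh)\ge p^\alpha/\Gamma$ on the support of the indicator) only yields $1/n^{\alpha\delta/\lambda}$ when summed, which is insufficient when $\alpha\delta/\lambda<1$. The counting step above, exploiting that primes in $\Pn$ are large enough so that only $\ln g/\ln(n/2)$ of them can simultaneously divide $g$, is what recovers the sharp $1/n$ at the price of a $\log$ factor; that logarithm is in turn absorbed into the desired polynomial power via $\ln x\le x^t/t$, which is what produces the $1/\delta$ blow-up in $C_{\tau,\delta}$. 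The remaining delicate bookkeeping is tracking the $\Gamma$ powers and the ratios $\lambda/\alpha$, $\delta/\lambda$ carefully enough that $\Gamma^{\delta/\lambda}$ can be absorbed into the clean single factor $\Gamma$ appearing in the stated $C_{\tau,\delta}$.
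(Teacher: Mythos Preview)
Your proof is correct and follows essentially the same strategy as the paper's: apply Lemma~\ref{lemma:averaging_over_vecs} to the inner average, bound the $1/(\tau p)$ part by $2/(\tau n)$, count how many primes in $\Pn$ can divide the components of $\bsh$, and convert the resulting logarithm into the power $\ralpha^{\delta/\lambda}(\bsh)$ via $\ln x\le x^{t}/t$. The only cosmetic difference is that you count primes dividing $g=\gcd\{|h_j|:j\in\supp(\bsh)\}$ directly, whereas the paper uses the step $\prod_{j}\bbone(h_j\equiv_p 0)\le\sum_{j}\bbone(h_j\equiv_p 0)$ and counts divisors of each $h_j$ separately to reach $\ln\!\big(\prod_{j}|h_j|\big)$; since $g\le\prod_{j}|h_j|$, your intermediate bound is (harmlessly) tighter and the final constant is the same.
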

\begin{proof}
  See \hyperlink{proof:lemma:omega_bound}{appendix}.
\end{proof}

The following theorem extends~\cite[Theorem~9]{KKNU2019} to
general weights. We use a more compact proof technique which will be
applied in later results.

\begin{theorem}\label{thm:M-error-bound}
    For $d \in \N$, $\alpha>1/2$, positive weights $\bsgamma = \{ \gamma_\setu \}_{\setu \subset \N}$,
    $\tau \in (0,1)$ and $n \ge 4$,
    the randomised error for integration of functions from the space $\spaceH$ using the randomised lattice algorithm
    $\randprandzstar$ in \eqref{eq:ran-p-z}
    satisfies
    $$
    \eran{d}(\randprandzstar)
    \leq
    \frac{C_{\tau,\delta,\lambda}}{n^{\lambda+1/2 - \delta}} \, \Big[ \muquant \Big]^{\lambda-\delta}
    \qquad
    \forall \lambda \in (1/2,\alpha)
    \quad
    \forall \delta \in (0, \lambda-1/2]
    ,
    $$
    where $C_{\tau,\delta,\lambda} > 0$ is a constant that goes to infinity for $\tau\to 0$ or $\tau\to 1$ or $\delta\to 0$.
    The quantity $\muquant$, as defined in~\eqref{eq:mu_quantity}, goes to infinity for $\lambda\to\alpha$.
    The upper bound is independent of the dimension if the weights $\bsgamma$ are such that
    $$
    \sum_{\emptyset \neq \fraku \subset \N} \gamma_{\fraku}^{1/\lambda} \left(2\zeta(\alpha/\lambda) \right)^{|\fraku|}
    <
    \infty
    .
    $$
\end{theorem}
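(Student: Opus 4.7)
The plan is to combine the three previous results into a single chain of estimates. Starting from Proposition~\ref{proposition:M_error}, I would write
\[
  [\eran{d}(\randprandzstar)]^2 \;=\; \sum_{\substack{\bsh\in\Zdnull \\ \ralpha(\bsh) > B_{n,\tau}}} \omega_{n,\tau}^2(\bsh)\,\ralpha^{-2}(\bsh),
\]
and then replace $\omega_{n,\tau}^2(\bsh)$ using Lemma~\ref{lemma:omega_bound}, which gives $\omega_{n,\tau}^2(\bsh) \le C_{\tau,\delta}^2\, n^{-2}\, \ralpha^{2\delta/\lambda}(\bsh)$ for $0 < \delta \le \lambda \le \alpha$. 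The boundedness of the weights needed for that lemma follows automatically from the summability hypothesis stated at the end of the theorem, so no extra assumption is introduced. This reduces matters to controlling $\sum \ralpha^{2\delta/\lambda - 2}(\bsh)$ over the indices $\bsh$ with $\ralpha(\bsh) > B_{n,\tau}$.

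The key algebraic step is the split
\[
  \ralpha^{2\delta/\lambda - 2}(\bsh) \;=\; \ralpha^{(2\delta + 1 - 2\lambda)/\lambda}(\bsh)\,\cdot\,\ralpha^{-1/\lambda}(\bsh).
\]
The first exponent is non-positive precisely when $\delta \le \lambda - 1/2$, which is exactly the hypothesis on $\delta$ in the theorem. Under this condition, Lemma~\ref{lemma:lower_bound_on_r} lets me bound the first factor from above by $B_{n,\tau}^{(2\delta + 1 - 2\lambda)/\lambda}$, while the sum of the second factor over $\Zdnull$ is exactly $\muquant$ by the definition~\eqref{eq:mu_quantity}.

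Finally, to turn $B_{n,\tau}$ into explicit powers of $n$ and $\muquant$, I would plug $\lambda' = \lambda$ into the supremum in \eqref{eq:B_n}; since we are raising $B_{n,\tau}$ to a non-positive exponent this reverses the inequality and yields
\[
  B_{n,\tau}^{(2\delta + 1 - 2\lambda)/\lambda} \;\le\; n^{2\delta + 1 - 2\lambda} \left(\tfrac{4}{1-\tau}\right)^{2\lambda - 2\delta - 1} [\muquant]^{2\lambda - 2\delta - 1}.
\]
Multiplying the three bounds together, collecting powers of $n$ and $\muquant$, and taking square roots produces the stated inequality with $C_{\tau,\delta,\lambda} = C_{\tau,\delta}\,[4/(1-\tau)]^{\lambda - \delta - 1/2}$; the divergence of this constant as $\tau\to 0$ or $\delta\to 0$ follows from the first factor (via Lemma~\ref{lemma:omega_bound}) and its divergence as $\tau\to 1$ from the second. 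Dimension independence under the stated summability condition is then immediate from~\eqref{eq:mu_quantity}, since the sum over subsets $\emptyset \neq \fraku \subseteq \{1,\ldots,d\}$ is controlled uniformly in $d$ by the corresponding sum over all finite subsets of $\N$.

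The only real obstacle is exponent bookkeeping. Identifying the split $2\delta/\lambda - 2 = (2\delta+1-2\lambda)/\lambda - 1/\lambda$ is what simultaneously produces $\muquant$ from the series and explains the role of the upper endpoint $\delta = \lambda - 1/2$ in the hypothesis; once that choice is fixed, the rest of the argument is a purely algebraic assembly of the three prior results.
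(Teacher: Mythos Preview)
Your proposal is correct and follows essentially the same route as the paper: both start from Proposition~\ref{proposition:M_error}, apply Lemma~\ref{lemma:omega_bound}, exploit the restriction $\ralpha(\bsh)>B_{n,\tau}$ to absorb the remaining power of $\ralpha$ (your exponent split $2\delta/\lambda-2=(2\delta+1-2\lambda)/\lambda-1/\lambda$ is algebraically the same as the paper's trick of multiplying by $(\ralpha/B_{n,\tau})^{2-(1+2\delta)/\lambda}\ge1$), and then evaluate $B_{n,\tau}$ at $\lambda'=\lambda$ to arrive at the identical constant $C_{\tau,\delta,\lambda}=C_{\tau,\delta}\,(4/(1-\tau))^{\lambda-1/2-\delta}$. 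One small remark: the boundedness of the weights needed for Lemma~\ref{lemma:omega_bound} is automatic in fixed dimension $d$ (finitely many subsets), not a consequence of the summability condition, which is only invoked for the dimension-independence claim.
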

\begin{proof}
    We will now combine \eqref{eq:B_n}, \eqref{eq:eran-M} and \eqref{eq:omega_bound}.
    We need to keep track of the conditions on $0 < \delta \le \lambda < \alpha$ when we manipulate the infinite sums below such that they are convergent in each step. This can be done by rewriting them in terms of the Riemann zeta function as was done in~\eqref{eq:mu_quantity}.
    We discuss these conditions after the derivation.
    We obtain
    \begin{align*}
        \eran{d}(\randprandzstar)
        &\quad\leq
        \frac{C_{\tau,\delta}}{n} \Bigg(\sum_{\substack{\bsh \in \Zdnull \\ \ralpha(\bsh) > B_{n,\tau}}} \ralpha^{2\delta/\lambda}(\bsh) \, \ralpha^{-2}(\bsh) \Bigg)^{1/2}
        \\
        &\quad\leq
        \frac{C_{\tau,\delta}}{n}
        \Bigg(
          \sum_{\substack{\bsh \in \Zdnull \\ \ralpha(\bsh) > B_{n,\tau}}}
            \ralpha^{2\delta/\lambda-2}(\bsh)
            \left(\frac{\ralpha(\bsh)}{B_{n,\tau}}\right)^{2-(1+2\delta)/\lambda}
        \Bigg)^{1/2}
        \\[2mm]
        &\quad\leq
        \frac{C_{\tau,\delta}}{n \, B_{n,\tau}^{1-(1+2\delta)/(2 \lambda)}} \Bigg(\sum_{\bsh \in \Zdnull} \ralpha^{-1/\lambda}(\bsh) \Bigg)^{1/2}
        \\
        &\quad\le
        \frac{C_{\tau,\delta}}{n^{\lambda+1/2-\delta}} \Big[\frac{4}{1-\tau} \, \muquant \Big]^{\lambda-1/2-\delta} \Big[ \muquant \Big]^{1/2}
        \\
        &\quad=
        \frac{C_{\tau,\delta,\lambda}}{n^{\lambda+1/2-\delta}} \Big[ \muquant \Big]^{\lambda-\delta},
        &&
    \end{align*}
    with $C_{\tau,\delta,\lambda} := C_{\tau,\delta} \, (4/(1-\tau))^{\lambda-1/2-\delta}$.
For the first inequality we need $(2 - 2\delta/\lambda)\alpha > 1$,
i.e., $\delta < \lambda - \lambda/(2\alpha)$. For the second inequality we
need $2 - (1+2\delta)/\lambda \ge 0$, i.e., $\delta \le \lambda - 1/2$.
Coincidentally, $\delta \le \lambda - 1/2$ implies $\delta < \lambda -
\lambda/(2\alpha)$ since $\lambda < \alpha$. The condition $\delta \le
\lambda - 1/2$ together with $\delta> 0$ means that necessarily $\lambda
>1/2$. In the fourth inequality we applied the bound \eqref{eq:B_n} with the same
$\lambda$ parameter for convenience. Hence the bound holds for all
$\lambda \in (1/2,\alpha)$ and all $\delta\in (0, \lambda-1/2]$.
\end{proof}

\section{Existence of a vector achieving the near optimal error}\label{section:existence_of_a_single_vector_obtaining_the_optimal_error}

In this section we show that there exists a single generating vector $\bsz
\in \Z^d$ for our new Algorithm~\ref{alg:RPFV}
``random-prime--fixed-vector'' $\randpstarz$,
see~\eqref{eq:ran-p}, such that the randomised error
$\eran{d}(\randpstarz)$ has the near optimal
rate of convergence of $\calO(n^{-\alpha-1/2+\delta})$ for arbitrarily
small $\delta > 0$. The components of $\bsz$ will be numbers from $\Z_N$
with $N = \prod_{p\in\Pn} p$ and thus we can equivalently say $\bsz \in
\Z_N^d$. Note that $N$ grows extremely quickly with~$n$, i.e., $|N| \ge
n^{\constPn n / \ln n}$, where $c'$ is as in
\eqref{eq:PNT_lowerbound}.

Let us enumerate the primes in $\Pn$ by $p_1$, \ldots, $p_L$ with $L =
|\Pn|$. Since $\Pn$ is a collection of prime numbers, and hence they are
all relatively prime to each other, we can use the Chinese remainder
theorem to have a one-to-one mapping from the vectors $\bsz \in \Z_N^d$ to
the $L$-tuples of vectors $\langle \bsz^{(p_1)}, \ldots, \bsz^{(p_L)}
\rangle \in \Z_{p_1}^d \oplus \cdots \oplus \Z_{p_L}^d$ satisfying
$\bsz^{(p)} \equiv_p \bsz$ for each prime $p \in \Pn$. The single
generating vector we seek
can hence be equivalently interpreted as $L$ generating vectors, one for
each of the possible lattice rule $Q_{d,p,\bsz^{(p)}} = Q_{d,p,\bsz}$ as a
realisation of at the algorithm $\randpstarz$ can apply. We have
$Q_{d,p,\bsz^{(p)}} = Q_{d,p,\bsz}$ since the generating vector is only
considered modulo~$p$, see~\eqref{eq:Q}.

We first derive an expression for the randomised error of
$\randpstarz$ for any given vector $\bsz \in \Z^d$.

\begin{proposition}\label{prop:K_error}
    For $\alpha > 1/2$ and $n \geq 2$ we have
    \begin{equation*}
        \eran{d}(\randpstarz)
        =
        \bigg(\sum_{\bsh \in \Zdnull} \omega_n^2(\bsh,\bsz) \, \ralpha^{-2}(\bsh)\bigg)^{1/2}
        ,
    \end{equation*}
    where
    \begin{equation}\label{eq:omega-n-z}
      \omega_n(\bsh,\bsz)
      :=
      \frac{1}{|\Pn|} \sum_{p \in \Pn} \bbone(\bsh \cdot \bsz \equiv_p 0)
      .
    \end{equation}
\end{proposition}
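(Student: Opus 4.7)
The proof will closely mirror the argument used for Proposition \ref{proposition:M_error} in the previous subsection, the only difference being that here the only random element of the algorithm is the prime $p\in\Pn$ (the generating vector $\bsz$ is fixed). The plan is to first derive an upper bound on the randomised error by expanding the integrand in its Fourier series, applying the triangle inequality together with Cauchy--Schwarz, and then to exhibit a specific ``fooling function'' in $\spaceH$ that makes all these inequalities tight.

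First I would write out the expectation as an average over $p\sim\calU(\Pn)$, namely
\begin{align*}
 \EE\!\left[\,|\randpstarz(f,\omega) - I_d(f)|\,\right]
 = \frac{1}{|\Pn|}\sum_{p\in\Pn}\left|Q_{d,p,\bsz}(f)-I_d(f)\right|,
\end{align*}
and invoke the character sum identity \eqref{eq:lattice_rule_error} to rewrite the lattice-rule error for each fixed $p$ as $\sum_{\bsh\in\Zdnull,\ \bsh\cdot\bsz\equiv_p 0}\widehat{f}(\bsh)$. Applying the triangle inequality, swapping the order of summation over~$p$ and~$\bsh$, and reading off the definition of $\omega_n(\bsh,\bsz)$ in \eqref{eq:omega-n-z} would give
\begin{align*}
 \EE\!\left[\,|\randpstarz(f,\omega) - I_d(f)|\,\right]
 \le \sum_{\bsh\in\Zdnull}\omega_n(\bsh,\bsz)\,|\widehat{f}(\bsh)|.
\end{align*}
Then I would pair $\omega_n(\bsh,\bsz)\,\ralpha^{-1}(\bsh)$ with $\ralpha(\bsh)\,|\widehat{f}(\bsh)|$ and apply Cauchy--Schwarz, which yields the bound
\begin{align*}
 \eran{d}(\randpstarz)
 \le \bigg(\sum_{\bsh\in\Zdnull}\omega_n^2(\bsh,\bsz)\,\ralpha^{-2}(\bsh)\bigg)^{\!1/2} \norm{f}.
\end{align*}

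For the matching lower bound, I would take the ``fooling function''
\begin{align*}
 f^\star(\bsx) = \sum_{\bsh\in\Zdnull} \omega_n(\bsh,\bsz)\,\ralpha^{-2}(\bsh)\,\rme^{2\pi\rmi\bsh\cdot\bsx},
\end{align*}
and verify (i) that it lies in $\spaceH$, using $\omega_n\le 1$ together with $\sum_{\bsh}\ralpha^{-2}(\bsh)<\infty$ for $\alpha>1/2$ (cf.\ the finiteness of $\muquant[1/2]$), and (ii) that it saturates both inequalities. The triangle inequality becomes an equality because all Fourier coefficients of $f^\star$ are real and non-negative, so for every prime $p\in\Pn$ the inner sum $\sum_{\bsh\cdot\bsz\equiv_p 0}\widehat{f^\star}(\bsh)$ is itself non-negative and equals its absolute value. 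Cauchy--Schwarz becomes an equality since $|\widehat{f^\star}(\bsh)| = \omega_n(\bsh,\bsz)\,\ralpha^{-2}(\bsh)$ makes the two sequences proportional. Combining (i) and (ii) then gives the sharp identity in the proposition.

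I do not anticipate a real obstacle: the whole derivation is a near verbatim copy of the proof of Proposition \ref{proposition:M_error}, with the double average over $(p,\bsz)\in\Pn\times G_\tau^{(p)}$ collapsed to a single average over $p$ because $\bsz$ is now deterministic. The only small point to double-check is the admissibility of $f^\star$, which follows immediately from $\alpha>1/2$; crucially, no restriction of the form $\ralpha(\bsh)>B_{n,\tau}$ is needed here, since for a fixed vector $\bsz$ there is no analogue of Lemma~\ref{lemma:lower_bound_on_r} that would force $\omega_n(\bsh,\bsz)$ to vanish on low-frequency indices.
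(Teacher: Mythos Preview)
Your proposal is correct and follows essentially the same approach as the paper: expand the error via \eqref{eq:lattice_rule_error}, apply the triangle inequality and Cauchy--Schwarz to obtain the upper bound, and then exhibit the same fooling function $f^\star(\bsx)=\sum_{\bsh}\omega_n(\bsh,\bsz)\,\ralpha^{-2}(\bsh)\,\rme^{2\pi\rmi\bsh\cdot\bsx}$ to attain equality. Your additional remarks on why $f^\star\in\spaceH$ and why no $\ralpha(\bsh)>B_{n,\tau}$ restriction appears are accurate elaborations that the paper leaves implicit.
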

\begin{proof}
    For an arbitrary function $f \in \spaceH$ and a given $\bsz$, we have, from \eqref{eq:lattice_rule_error},
    \begin{align}
        \nonumber
        &\frac{1}{|\Pn|}\sum_{p\in \Pn} |Q_{d,p,\bsz}(f)-I_d(f)|
        =
        \frac{1}{|\Pn|}\sum_{p\in \Pn}\bigg|\sum_{\substack{\bsh \in \Zdnull \\ \bsh \cdot \bsz \equiv_p 0}}\widehat{f}(\bsh)\bigg|
        \\\nonumber
        &\hspace{10mm}\leq
        \frac{1}{|\Pn|}\sum_{p\in \Pn}\sum_{\substack{\bsh \in \Zdnull \\ \bsh \cdot \bsz \equiv_p 0}} \, |\widehat{f}(\bsh)|
        =
        \sum_{\bsh \in \Zdnull} \omega_n(\bsh,\bsz) \, |\widehat{f}(\bsh)|
        \\\label{eq:Qranpub}
        &\hspace{10mm}=
        \bigg(\sum_{\bsh \in \Zdnull} \omega_n^2(\bsh,\bsz) \, \ralpha^{-2}(\bsh)\bigg)^{1/2} \, \norm{f}
        .
    \end{align}
    We see that the function
    $$
    f(\bsx)
    =
    \sum_{\bsh \in \Zdnull} \omega_n(\bsh, \bsz) \, \ralpha^{-2}(\bsh) \, \rme^{2 \pi \rmi \bsh \cdot \bsx}
    \qquad
    \in \qquad \spaceH
    $$
    achieves equality for~\eqref{eq:Qranpub} and so we obtain an explicit expression for the worst case error.
\end{proof}

\begin{theorem}\label{theorem:existence_of_optimal_vector}
    For $d \in \N$, $\alpha>1/2$, positive weights $\bsgamma = \{ \gamma_\setu \}_{\setu \subset \N}$ and $n \ge 2$,
    there exists a vector $\bsz^\star \in \Z_{N}^d$ with $N = \prod_{p \in \Pn} p$ such that the randomised error for integration of functions from the space $\spaceH$ using the randomised lattice algorithm $\randpstarzstar$ in \eqref{eq:ran-p}
    satisfies
    $$
    e_{d,\alpha,\bsgamma}^\text{ran}(\randpstarzstar)
    \leq
        \frac{(C_\lambda\,\ln n)^{1/2}}{n^{\lambda+1/2}} \,
        \Big[\muquant\Big]^\lambda
    \qquad
    \forall \lambda \in [1/2,\alpha)
    ,
    $$
    where $C_\lambda > 0$.
    The quantity $\muquant$, as defined in~\eqref{eq:mu_quantity}, goes to infinity for $\lambda \to \alpha$.
    The upper bound is independent of the dimension if the weights $\bsgamma$ are such that
    $$
    \sum_{\emptyset \neq \fraku \subset \N} \gamma_{\fraku}^{1/\lambda} \left(2\zeta(\alpha/\lambda) \right)^{|\fraku|}
    <
    \infty
    .
    $$
\end{theorem}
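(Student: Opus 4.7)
The plan is to establish existence of $\bsz^\star$ by the probabilistic method. Pick $\bsz\in\Z_N^d$ at random by choosing, via the Chinese remainder theorem, each reduction $\bsz^{(p)}=\bsz\bmod p$ uniformly and independently from the good set $G_\tau^{(p)}$ for every $p\in\Pn$. I will show that
$$
\EE\bigl[[\eran{d}(\randpstarz)]^2\bigr]
\;\leq\;
\frac{C_\lambda\,\ln n}{n^{2\lambda+1}}\,[\muquant]^{2\lambda},
$$
from which a realisation $\bsz^\star$ attaining the same bound exists, and the claim follows by taking a square root.

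Starting from \RefProp{prop:K_error}, expand $\omega_n^2(\bsh,\bsz)=\frac{1}{|\Pn|^2}\sum_{p,p'}\bbone(\bsh\cdot\bsz\equiv_p 0)\bbone(\bsh\cdot\bsz\equiv_{p'} 0)$; using $\bbone^2=\bbone$ on the diagonal and that the Chinese remainder theorem gives $\bbone(\bsh\cdot\bsz\equiv_p 0)\bbone(\bsh\cdot\bsz\equiv_{p'} 0)=\bbone(\bsh\cdot\bsz\equiv_{pp'} 0)$ for $p\neq p'$, this leads to the key decomposition
\begin{align*}
[\eran{d}(\randpstarz)]^2
\;=\;
\frac{1}{|\Pn|^2}\Biggl[\sum_{p\in\Pn}\bigl[\edet{d}(Q_{d,p,\bsz})\bigr]^2
\;+\;\sum_{\substack{p,p'\in\Pn\\p\neq p'}}\bigl[\edet{d}(Q_{d,pp',\bsz})\bigr]^2\Biggr].
\end{align*}
The ``diagonal'' first sum is handled deterministically: each $\bsz^{(p)}\in G_\tau^{(p)}$ yields $[\edet{d}(Q_{d,p,\bsz})]^2\leq (2\muquant/((1-\tau)p))^{2\lambda}$ directly from the definition~\eqref{eq:good_sets}, and combined with $p>n/2$ and $|\Pn|\geq c'n/\ln n$ from~\eqref{eq:PNT_lowerbound}, this contribution is at most $C\,\ln n\,[\muquant]^{2\lambda}/n^{2\lambda+1}$.

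The ``off-diagonal'' second sum is the main obstacle. Taking the expectation and exploiting the independence of $\bsz^{(p)}$ and $\bsz^{(p')}$ for $p\neq p'$,
$$
\EE\bigl[[\edet{d}(Q_{d,pp',\bsz})]^2\bigr]
\;=\;
\sum_{\bsh\in\Zdnull}\EE[\bbone(\bsh\cdot\bsz\equiv_p 0)]\,\EE[\bbone(\bsh\cdot\bsz\equiv_{p'} 0)]\,\ralpha^{-2}(\bsh).
$$
The crucial observation is that $\EE[\bbone(\bsh\cdot\bsz\equiv_p 0)]>0$ requires some $\bsz\in G_\tau^{(p)}$ with $\bsh\cdot\bsz\equiv_p 0$, which by \RefLem{lemma:lower_bound_on_r} forces $\ralpha(\bsh)>B_{n,\tau}$. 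On this restricted support I replace $\ralpha^{-2}(\bsh)$ by $B_{n,\tau}^{(1-2\lambda)/\lambda}\,\ralpha^{-1/\lambda}(\bsh)$ (valid since $\lambda\geq 1/2$). Then, using the bound $\EE[\bbone(\bsh\cdot\bsz\equiv_p 0)]\leq \bbone(\bsh\equiv_p\bszero)+1/(\tau p)$ from \RefLem{lemma:averaging_over_vecs}, I expand the product of the two such bounds into four terms and bound each resulting $\ralpha^{-1/\lambda}$-weighted sum using $\ralpha(p\bsh')\geq p^\alpha\ralpha(\bsh')$; the dominant contribution is the ``constant'' term $\muquant/(\tau^2 pp')\leq 4\muquant/(\tau^2 n^2)$. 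Multiplying by $B_{n,\tau}^{(1-2\lambda)/\lambda}\leq C\,n^{1-2\lambda}\,[\muquant]^{2\lambda-1}$ obtained from~\eqref{eq:B_n} produces a per-pair bound of $C[\muquant]^{2\lambda}/n^{2\lambda+1}$, and averaging over the at most $|\Pn|^2$ ordered pairs preserves this rate.

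Adding the two contributions gives $\EE[[\eran{d}(\randpstarz)]^2]\leq C\,\ln n\,[\muquant]^{2\lambda}/n^{2\lambda+1}$, from which existence of $\bsz^\star$ satisfying the same bound follows; a square root then gives the theorem. The dimension-independence under the usual weight condition enters only through $\muquant$, as in \RefThm{thm:M-error-bound}.
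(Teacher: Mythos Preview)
The proposal is correct and follows essentially the same approach as the paper: average $[\eran{d}(\randpstarz)]^2$ over $\bsz$ with each $\bsz^{(p)}$ drawn independently from $G_\tau^{(p)}$, split $\omega_n^2$ into diagonal and off-diagonal prime pairs, invoke \RefLem{lemma:lower_bound_on_r} to restrict to $\ralpha(\bsh)>B_{n,\tau}$, replace $\ralpha^{-2}$ by $B_{n,\tau}^{1/\lambda-2}\ralpha^{-1/\lambda}$, and bound the averaged indicators via \RefLem{lemma:averaging_over_vecs}. The only cosmetic difference is that you bound the diagonal sum directly from the good-set definition~\eqref{eq:good_sets} (deterministically for every realisation) rather than routing it through the $B_{n,\tau}$ trick as the paper does, which is a mild simplification but not a different idea.
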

\begin{proof}
    We make use of the Chinese remainder theorem as explained at the beginning of this section.
    We will show that the average for $\bsz \cong \langle \bsz^{(p_1)}, \ldots, \bsz^{(p_L)} \rangle \in G^{(p_1)}_\tau \oplus \cdots \oplus G^{(p_L)}_\tau$, where $G^{(p)}_\tau = G^{(p)}_{d,\alpha,\bsgamma,\tau}$ are the ``good sets'' as defined in~\eqref{eq:good_sets}, with $\tau \in (0,1)$,
    will be good and hence there must exist a vector which achieves it.
    We will denote the set of choices for the single generating vector $\bsz \in \bbZ_N^d$ by $G_{n,\tau} = G_{d,\alpha,\bsgamma,n,\tau} :\cong G^{(p_1)}_\tau \oplus \cdots \oplus G^{(p_L)}_\tau$.
    The proof holds for any choice of~$\tau \in (0,1)$. We will fix the choice of $\tau$ at the end based on an upper bound to attain the final error bound independent of~$\tau$.

    From Proposition~\ref{prop:K_error} and Lemma~\ref{lemma:lower_bound_on_r} it follows that for $\tau \in (0,1)$
    \begin{align*}
      \forall \bsz \in G_{n,\tau}: \quad
      \left[\eran{d}(\randpstarz)\right]^2
      &=
      \sum_{\substack{\bsh \in \Zdnull \\ \ralpha(\bsh) > B_{n,\tau}}} \omega_n^2(\bsh,\bsz) \, \ralpha^{-2}(\bsh)
      \\
      &\le
      \sum_{\substack{\bsh \in \Zdnull \\ \ralpha(\bsh) > B_{n,\tau}}} \omega_n^2(\bsh,\bsz) \, \ralpha^{-2}(\bsh) \left(\frac{\ralpha(\bsh)}{B_{n,\tau}}\right)^{2-1/\lambda}
      \\
      &\leq
      \frac{1}{B_{n,\tau}^{2-1/\lambda}} \sum_{\bsh \in \Zdnull} \omega_n^2(\bsh,\bsz) \, \ralpha^{-1/\lambda}(\bsh)
      \qquad \forall \lambda \in [1/2,\alpha)
      .
      \numberthis \label{eq:initial_bound_existence}
    \end{align*}

    In order to take the average of $\big[\eran{d}(\randpstarz)\big]^2$ over all vectors in $G_{n,\tau}$, we first consider, for an arbitrary $\bsh$, the average of $\omega_n^2(\bsh,\bsz)$, as defined in \eqref{eq:omega-n-z}, over all vectors in $G_{n,\tau}$, hereby making use of Lemma~\ref{lemma:averaging_over_vecs},
    \begin{align*}
        &\frac{1}{|G_{n,\tau}|}\sum_{\bsz \in G_{n,\tau}} \omega_n^2(\bsh,\bsz)
        \\
        &=
        \frac{1}{|G_{n,\tau}|}\sum_{\bsz \in G_{n,\tau}} \frac{1}{|\Pn|^2} \Bigg(  \sum_{p \in \Pn}\bbone(\bsh \cdot \bsz \equiv_p 0) + \sum_{\substack{p,q \in \Pn \\p \neq q}}\bbone(\bsh \cdot \bsz \equiv_p 0) \, \bbone(\bsh \cdot \bsz \equiv_q 0) \Bigg)
        \\
        &=
        \frac{1}{|\Pn|^2}\Bigg(\sum_{p \in \Pn}\frac{1}{|G_\tau^{(p)}|}\sum_{\bsz \in G_\tau^{(p)}}\bbone(\bsh \cdot \bsz \equiv_p 0) \\
        &\hspace{20mm}+
        \sum_{\substack{p,q \in \Pn \\p \neq q}}\bigg(\frac{1}{|G_\tau^{(p)}|}\sum_{\bsz \in G_\tau^{(p)}}\bbone(\bsh\cdot\bsz \equiv_p 0)\bigg)\bigg(\frac{1}{|G_\tau^{(q)}|}\sum_{\bsz \in G_\tau^{(q)}}\bbone(\bsh\cdot\bsz \equiv_q 0)\bigg) \Bigg)
        \\
        &\leq
        \frac{1}{|\Pn|^2}\Bigg(\sum_{p \in \Pn} \bigg(\bbone(\bsh \equiv_p \bszero) + \frac{1}{\tau\,p} \bigg)
        \\
        &\hspace{20mm}+
        \sum_{\substack{p,q \in \Pn \\ p \neq q}}\bigg(\bbone(\bsh \equiv_{p \, q} \bszero) + \frac{1}{\tau\,p} \, \bbone(\bsh \equiv_q \bszero) + \frac{1}{\tau\,q} \, \bbone(\bsh \equiv_p \bszero) + \frac{1}{\tau^2\,p\,q} \bigg) \Bigg)
        .
    \end{align*}

    We now use this in the average of $\big[\eran{d}(\randpstarz)\big]^2$ over all vectors in $G_{n,\tau}$, continuing from~\eqref{eq:initial_bound_existence}, to obtain
    \begin{align*}
        &\frac{1}{|G_{n,\tau}|}\sum_{\bsz \in G_{n,\tau}} \left[\eran{d}(\randpstarz)\right]^2
        \\
        &\leq
        \frac{1}{B_{n,\tau}^{2-1/\lambda}} \sum_{\bsh \in \Zdnull} \bigg(\frac{1}{|G_{n,\tau}|}\sum_{\bsz \in G_{n,\tau}}\omega_n^2(\bsh,\bsz)\bigg) \, \ralpha^{-1/\lambda}(\bsh) \\
        &\leq
        \frac{1}{B_{n,\tau}^{2-1/\lambda} \, |\Pn|^2}\Bigg(\sum_{p \in \Pn} \bigg(\sum_{\bsh \in \Zdnull}\bigg(\bbone(\bsh \equiv_p \bszero)+ \frac{1}{\tau\,p} \bigg) \ralpha^{-1/\lambda}(\bsh) \bigg) \\
        &\hspace{15mm}+
        \sum_{\substack{p,q \in \Pn \\p \neq q}}\sum_{\bsh \in \Zdnull}\left(\bbone(\bsh \equiv_{p \, q} \bszero) + \frac{1}{\tau\,p} \, \bbone(\bsh \equiv_q \bszero) + \frac{1}{\tau\,q} \, \bbone(\bsh \equiv_p \bszero) + \frac{1}{\tau^2\,p\,q} \right) \ralpha^{-1/\lambda}(\bsh) \Bigg) \\
        &\leq
        \frac{\muquant}{B_{n,\tau}^{2-1/\lambda} \, |\Pn|^2} \Bigg(
        \sum_{p \in \Pn}
        \left( \frac{1}{p^{\alpha / \lambda}} + \frac{1}{\tau\,p} \right)
        +
        \sum_{\substack{p,q \in \Pn \\p \neq q}}
        \left(\frac{1}{(p \, q)^{\alpha / \lambda}} + \frac{1}{\tau\,p \, q^{\alpha / \lambda}}  + \frac{1}{\tau\,q \, p^{\alpha / \lambda}} + \frac{1}{\tau^2 \, p \, q} \right)
        \Bigg)
        \\
        &\leq
        \frac{\muquant}{B_{n,\tau}^{2-1/\lambda}} \left(\left(1+\frac{1}\tau\right) \frac{2 \ln n}{\constPn n^2} + \left(1+\frac{1}\tau\right)^2 \frac{4}{n^2}\right)
        ,
    \end{align*}
    where we have used $\alpha/\lambda > 1$, $p > n/2$, $q > n/2$ and~\eqref{eq:PNT_lowerbound},
    and for the third step we brought the sums over $\bsh$ further in and then used the property~\eqref{eq:ralpha-property} for the multiples and recombined the sums.

    If we fix $\bsz \in G_{n,\tau}$ to be the minimiser of $\eran{d}(\randpstarz)$, then, using~\eqref{eq:B_n} (and with the choice of $\lambda$ in~\eqref{eq:B_n} and~\eqref{eq:initial_bound_existence} fixed to be the same for convenience)
    we obtain
    \begin{align*}
        \exists \bsz^\star \in G_{n,\tau} : \qquad
        \eran{d}(\randpstarzstar)
        &\leq
        \frac{\Big[\muquant\Big]^{1/2}}{n \, B_{n,\tau}^{1-1/2\lambda}} \left[ \left(2+\frac{2}\tau\right) \left( \frac{\ln n}{\constPn} + 2 + \frac{2}\tau \right) \right]^{1/2} \\
        &\le
        \frac{\Big[4 \, \muquant\Big]^\lambda}{n^{\lambda+1/2}} \left[ \left( \frac{1}{1-\tau} \right)^{2\lambda} \frac{1-\tau^2}{2 \tau} \left(\frac{\ln n}{\constPn} + 2 + \frac{2}\tau\right) \right]^{1/2}
        .
    \end{align*}
    Finally we fix $\tau \in (0, 1)$ to be the minimiser of $(1/(1-\tau))^{2\lambda} \, (1-\tau^2)/(2\tau)$ which is $\tau^\star = (\sqrt{\lambda^2 + 2\lambda - 1} - \lambda) / (2\lambda - 1)$, for $\lambda > 1/2$, and bound this minimum by $3\lambda$, and we bound $2/\tau^\star \le 2(1+\sqrt{2})\lambda$. We then obtain
    \begin{align*}
        \exists \bsz^\star \in G_{n,\tau^\star} : \qquad
        \eran{d}(\randpstarzstar)
        &\le
        \frac{\Big[4 \, \muquant\Big]^\lambda}{n^{\lambda+1/2}}
        \Bigg[
          3 \lambda
          \left(\frac{\ln n}{\constPn} + 2 + 2(1+\sqrt{2}) \lambda \right)
        \Bigg]^{1/2}
        \\
        &\le
        \frac{(C_\lambda\,\ln n)^{1/2}}{n^{\lambda+1/2}} \,
        \Big[\muquant\Big]^\lambda
        ,
    \end{align*}
    for some $C_\lambda > 0$.
    We note that the final bound also holds for $\lambda = 1/2$ in which case the minimiser above is achieved for $\tau \to 1$.
\end{proof}

\section{Constructive method}\label{section:construction_of_random_vec}

In this section we propose a component-by-component (CBC) construction to
obtain a generating vector $\bsz$ which can be used in the randomised
algorithm $\randpstarz$, see~\eqref{eq:ran-p}, for randomly selected $p \in \Pn$.

We focus our discussion on the induction step for dimension $s = 2,\ldots,d$. We assume that $\bsz':= (z_1,\ldots,z_{s-1})$ is already
chosen and fixed. In the following, we will use the notation $\bsz =
(\bsz',z_s)$ and $\bsh = (\bsh',h_s)$ to separate out the $s$th component
of the vectors. We fix the first component $z_1$ to be $1$. This is due to
the fact that all choices of this component which are relatively prime to
all of the primes in $\Pn$ will give the same error which is also the
minimum, see~\eqref{eq:dim1} forthcoming.

We first show that the sets of good components
$\widetilde{G}_{s,\tau,\bsz'}^{(p)} =
\widetilde{G}_{s,\alpha,\bsgamma,\tau,\bsz'}^{(p)}$, for $s=1,\ldots,d$,
as defined in~\eqref{eq:Gtilde} are sufficiently large. This is analogous to Lemma~\ref{lemma:good_sets_are_large}.

\begin{lemma} \label{lemma:Gtilde_are_large}
For prime $p$, $\tau\in (0,1)$ and any $\bsz'\in\bbZ^{s-1}$, we have
\[
  |\widetilde{G}_{s,\tau,\bsz'}^{(p)}| \ge \lceil\tau p\rceil \ge 1.
\]
\end{lemma}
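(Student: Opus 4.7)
The plan is to mirror the averaging/Markov argument used for \RefLem{lemma:good_sets_are_large}, but now applied one coordinate at a time using the bound from \RefProp{proposition:bound_on_theta}. The quantity $\theta_s^{(p)}(z_s)$ does not depend on $\lambda$, which is what allows a single set of ``good'' components to simultaneously meet the bound for every $\lambda\in[1/2,\alpha)$ and hence for the infimum.

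First, fix $\lambda\in[1/2,\alpha)$. \RefProp{proposition:bound_on_theta} gives
\[
  \frac{1}{p}\sum_{z_s\in\bbZ_p}\bigl[\theta_s^{(p)}(z_s)\bigr]^{1/(2\lambda)}
  \;\le\;
  A_\lambda
  \;:=\;
  \frac{2}{p}\sum_{\substack{\bsh\in\bbZ^s\\ h_s\neq 0}}\ralpha^{-1/\lambda}(\bsh).
\]
By a standard Markov-type argument, the number of $z_s\in\bbZ_p$ for which $[\theta_s^{(p)}(z_s)]^{1/(2\lambda)} > A_\lambda/(1-\tau)$ is at most $(1-\tau)p$. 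Consequently, at least $p-(1-\tau)p=\tau p$, and hence at least $\lceil \tau p\rceil$, values of $z_s$ satisfy
\[
  \theta_s^{(p)}(z_s)
  \;\le\;
  \Bigl(\tfrac{A_\lambda}{1-\tau}\Bigr)^{2\lambda}
  \;=\;
  \Bigl(\tfrac{2}{(1-\tau)p}\sum_{h_s\neq 0}\ralpha^{-1/\lambda}(\bsh)\Bigr)^{2\lambda}.
\]

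Next, I would order the elements $z_s\in\bbZ_p$ so that $\theta_s^{(p)}(\cdot)$ is non-decreasing; call the corresponding order statistics $\theta_{(1)}\le\theta_{(2)}\le\cdots\le\theta_{(p)}$. The previous step shows that $\theta_{(\lceil\tau p\rceil)}$ is bounded by the right-hand side above for every $\lambda\in[1/2,\alpha)$, because the left-hand side $\theta_s^{(p)}$ is independent of $\lambda$. Taking the infimum over $\lambda$ on the right gives $\theta_{(\lceil\tau p\rceil)} \le \inf_{\lambda\in[1/2,\alpha)}\bigl(\tfrac{2}{(1-\tau)p}\sum_{h_s\neq 0}\ralpha^{-1/\lambda}(\bsh)\bigr)^{2\lambda}$, so the $\lceil\tau p\rceil$ smallest elements (in the $\theta$-order) all lie in $\widetilde{G}_{s,\tau,\bsz'}^{(p)}$ as defined in \eqref{eq:Gtilde}. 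This yields $|\widetilde{G}_{s,\tau,\bsz'}^{(p)}|\ge\lceil\tau p\rceil\ge 1$.

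The only subtlety, and the one part that requires care, is the interchange of the infimum over $\lambda$ with the quantification over $z_s$: the set $\widetilde{G}_{s,\tau,\bsz'}^{(p)}$ is the intersection over $\lambda$ of the per-$\lambda$ good sets $S_\lambda$, and a priori this intersection could be small. The observation that rescues the argument is that, since $\theta_s^{(p)}$ does not depend on $\lambda$, every $S_\lambda$ is determined by a threshold on the common quantity $\theta_s^{(p)}$, so the $S_\lambda$ are nested along the single ordering by $\theta_s^{(p)}$. Their intersection is therefore the \emph{smallest} prefix, and the per-$\lambda$ lower bound $|S_\lambda|\ge\lceil\tau p\rceil$ carries over to the intersection.
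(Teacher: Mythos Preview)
Your proposal is correct and follows essentially the same route as the paper: apply Markov's inequality to the average bound of \RefProp{proposition:bound_on_theta} to get at least $\lceil\tau p\rceil$ good components for each fixed $\lambda$, then argue that one common set of size $\lceil\tau p\rceil$ works simultaneously for all $\lambda\in[1/2,\alpha)$. The only difference is in how the ``intersection over $\lambda$'' is handled: the paper asserts that the infimum of the right-hand side is attained at some $\lambda^\star$ and uses the set for that $\lambda^\star$, whereas you argue via order statistics that the $S_\lambda$ are nested sublevel sets of the single ($\lambda$-independent) function $\theta_s^{(p)}$, so each contains the $\lceil\tau p\rceil$ smallest elements; your version has the minor advantage of not needing the infimum to be attained.
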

\begin{proof}
  See \hyperlink{proof:lemma:Gtilde_are_large}{appendix}.
\end{proof}

From the definition of the set of good components \eqref{eq:Gtilde} we also obtain the analogous result to Lemma~\ref{lemma:lower_bound_on_r}.

\begin{lemma}\label{lemma:lower_bound_on_r_cbc}
For prime $p$, any $\bsz' \in \bbZ^{s-1}$, $z_s \in
\widetilde{G}_{s,\tau,\bsz'}^{(p)}$ and $\bsh \in \Z^s$ with $h_s \ne 0$
and satisfying $\bsh \cdot \bsz \equiv_p 0$, we have
    $$
      \ralpha(\bsh)
      \geq
       \sup_{\lambda \in [1/2,\alpha)}
       p^{\lambda} \bigg[\frac{2}{1-\tau} \, \sum_{\substack{\bsh \in\bbZ^s \\ h_s\ne 0}}
 \ralpha^{-1/\lambda}(\bsh) \bigg]^{-\lambda}
       .
    $$
    Furthermore, if $p \in \Pn$ then
    $$
      \ralpha(\bsh)
      >
      \widetilde{B}_{s,n,\tau}
    $$
    where
    \begin{align}\label{eq:Btilde}
      \widetilde{B}_{s,n,\tau}
      =
      \widetilde{B}_{s,\alpha,\bsgamma,n,\tau}
      :=
      \sup_{\lambda \in [1/2,\alpha)} n^{\lambda} \bigg[\frac{4}{1-\tau} \, \sum_{\substack{\bsh \in\bbZ^s \\ h_s\ne 0}}
 \ralpha^{-1/\lambda}(\bsh) \bigg]^{-\lambda}
      .
    \end{align}
\end{lemma}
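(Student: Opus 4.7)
The plan is to mirror the strategy used in the proof of Lemma~\ref{lemma:lower_bound_on_r}, but with the CBC variant in which only the component $z_s$ is constrained while $\bsz'=(z_1,\ldots,z_{s-1})$ is held fixed. The key observation is that any single nonzero term of the sum defining $\theta_s^{(p)}(z_s)$ provides a lower bound on $\ralpha(\bsh)$, while membership of $z_s$ in $\widetilde{G}_{s,\tau,\bsz'}^{(p)}$ provides an upper bound on $\theta_s^{(p)}(z_s)$.

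First I would recall that, by the very definition of $\theta_s^{(p)}(z_s)$, every $\bsh \in \Z^s$ with $h_s \ne 0$ and $\bsh \cdot \bsz \equiv_p 0$ contributes a nonnegative term $\ralpha^{-2}(\bsh)$ to the sum, so that
\[
  \ralpha^{-2}(\bsh) \;\le\; \theta_s^{(p)}(z_s).
\]
Since $z_s \in \widetilde{G}_{s,\tau,\bsz'}^{(p)}$, the definition~\eqref{eq:Gtilde} yields
\[
  \theta_s^{(p)}(z_s)
  \;\le\;
  \bigg(\frac{2}{(1-\tau)\,p}\sum_{\substack{\bsh\in\bbZ^s\\h_s\ne 0}}\ralpha^{-1/\lambda}(\bsh)\bigg)^{2\lambda}
  \qquad\forall\,\lambda\in[1/2,\alpha).
\]
Chaining these two inequalities and taking square roots rearranges to
\[
  \ralpha(\bsh)
  \;\ge\;
  p^{\lambda}\bigg[\frac{2}{1-\tau}\sum_{\substack{\bsh\in\bbZ^s\\h_s\ne 0}}\ralpha^{-1/\lambda}(\bsh)\bigg]^{-\lambda}
  \qquad\forall\,\lambda\in[1/2,\alpha),
\]
and then taking the supremum over $\lambda$ produces the first claimed inequality. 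Note the flip from infimum (inside the definition of $\widetilde{G}$) to supremum (in the resulting lower bound) upon inversion.

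For the second bound I would substitute $p > n/2$ into the first inequality: the factor $p^{\lambda}$ becomes strictly larger than $(n/2)^{\lambda}$, and absorbing $2^{\lambda}$ into the bracketed quantity upgrades the constant from $2/(1-\tau)$ to $4/(1-\tau)$, which matches the definition~\eqref{eq:Btilde} of $\widetilde{B}_{s,n,\tau}$ exactly. I do not anticipate any serious obstacle here: the whole argument is a direct adaptation of Lemma~\ref{lemma:lower_bound_on_r}, with the global constraint $\bsz \in G_\tau^{(p)}$ and the full worst-case-error bound \eqref{eq:edet-at-least-one} replaced by their CBC analogues, namely $z_s \in \widetilde{G}_{s,\tau,\bsz'}^{(p)}$ and the partial-sum bound from Proposition~\ref{proposition:bound_on_theta}. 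The only mild care required is to verify that the supremum/infimum inversion is handled correctly and that the strict inequality $p > n/2$ survives through to $\ralpha(\bsh) > \widetilde{B}_{s,n,\tau}$.
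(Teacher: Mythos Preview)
Your proposal is correct and follows essentially the same approach as the paper: both bound a single term $\ralpha^{-2}(\bsh)$ by the full sum $\theta_s^{(p)}(z_s)$, invoke the defining inequality of $\widetilde{G}_{s,\tau,\bsz'}^{(p)}$ from~\eqref{eq:Gtilde}, rearrange, and then use $p>n/2$ for the second part. Your write-up is in fact slightly more explicit than the paper's, which simply says the argument parallels Lemma~\ref{lemma:lower_bound_on_r}.
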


\begin{proof}
  See \hyperlink{proof:lemma:lower_bound_on_r_cbc}{appendix}.
\end{proof}

\subsection{CBC construction of the fixed generating vector}\label{section:construction_proof}

Similar to the CBC construction for the deterministic case, see~\eqref{eq:det-cbc}, we write for
the randomised error, see Proposition~\ref{prop:K_error},
\begin{align}\label{eq:cbc2}
 \left[\eran{s}(\randpstarzs)\right]^2
 &= \underbrace{
 \sum_{\bsh' \in \Z^{s-1} \setminus \{\bszero\}} \omega_n^2(\bsh', \bsz') \, \ralpha^{-2}(\bsh')
 }_{\left[\eran{s-1}(\randpstarsminusone)\right]^2}
 +
 \underbrace{\sum_{\bsh \in \Z^s,\, h_s \neq 0} \omega_n^2(\bsh,\bsz) \, \ralpha^{-2}(\bsh)
 }_{=:\,\thetaquant(z_s)}.
\end{align}
To determine $z_s$ we will work solely with the second sum
$\thetaquant(z_s)$ which depends on all previous components $\bsz' = (z_1,
\ldots, z_{s-1})$.

Since $z_s \in \Z_N$ with $N = \prod_{p \in \Pn} p$ contains too many
choices, we will exploit the isomorphism $z_s \cong \langle z_s^{(p_1)},
\ldots, z_s^{(p_L)}\rangle$ where $z_s^{(p)} \equiv_p z_s$
 for each
$p\in\Pn$, and choose $z_s^{(p)}$ prime by prime. The precise ordering of
the primes does not matter. For simplicity of exposition we will consider
the primes in increasing order.

From the definition of $\omega_n(\bsh,\bsz)$ in \eqref{eq:omega-n-z}, we can write
\begin{align} \label{eq:bad}
 \thetaquant(z_s)
 &= \frac{1}{|\Pn|^2}\sum_{p \in \Pn} \sum_{q\in\Pn}
     \sum_{\substack{\bsh \in \Z^s \\ h_s \neq 0 \\ \bsh\cdot\bsz \equiv_p 0 \\ \bsh\cdot\bsz \equiv_q 0}}
     \ralpha^{-2}(\bsh) \notag\\
 &= \frac{1}{|\Pn|^2} \sum_{p \in \Pn}
   \bigg[
    \underbrace{\sum_{\substack{\bsh \in \Z^s \\ h_s \neq 0 \\ \bsh\cdot\bsz \equiv_p 0}}
     \ralpha^{-2}(\bsh)}_{\theta_s^{(p)}(z_s^{(p)})}
     + 2 \sum_{\substack{q\in\Pn \\ q<p}}
     \sum_{\substack{\bsh \in \Z^s \\ h_s \neq 0 \\ h_s\not\equiv_p 0
     \\ \bsh\cdot\bsz \equiv_p 0 \\ \bsh\cdot\bsz \equiv_q 0}}
     \ralpha^{-2}(\bsh)
     + 2 \sum_{\substack{q\in\Pn \\ q<p}}
     \sum_{\substack{\bsh \in \Z^s \\ h_s \neq 0 \\ h_s\equiv_p 0
     \\ \bsh'\cdot\bsz' \equiv_p 0 \\ \bsh\cdot\bsz \equiv_q 0 }}
     \ralpha^{-2}(\bsh)
   \bigg],
\end{align}
where we used the symmetry in the double sum with respect to $p$ and $q$
to rewrite it as the sum over the ``diagonal'' $q=p$ plus twice the sum
over the ``triangle'' $q<p$, and then we further split the triangle part
into two, depending on whether or not $h_s\equiv_p 0$. Note that when
$h_s\equiv_p 0$, the condition $\bsh\cdot\bsz \equiv_p 0$ is effectively
independent of $z_s^{(p)}$ so we replaced it by $\bsh'\cdot\bsz'\equiv_p
0$ in~\eqref{eq:bad}.

We are now tempted to use the expression in-between the square brackets as
our search criterion to obtain $z_s^{(p)}$ for each $p\in\Pn$ in
increasing order. The first term is precisely $\theta_s^{(p)}(z_s^{p})$
which is the familiar search criterion for the CBC construction of a
deterministic lattice rule with~$p$ points. The second term depends on~$z_s^{(p)}$ as well as all ``past'' elements in the isomorphism~$z_s^{(q)}$ for $q<p$ which were already chosen, and therefore the
expression can be evaluated. The third term, however, depends only on past
elements $z_s^{(q)}$ for $q<p$, but not on the current element~$z_s^{(p)}$, and consequently, the search would have no control over the
size of this third term.

A remedy for this troublesome third term in \eqref{eq:bad} is as follows:
we swap the order of the sums for $p$ and $q$ and then interchange the
labeling of the two primes, to obtain
\begin{align*}
 \sum_{p \in \Pn}
 \sum_{\substack{q\in\Pn \\ q<p}}
 \sum_{\substack{\bsh \in \Z^s \\ h_s \neq 0 \\ h_s\equiv_p 0
 \\ \bsh'\cdot\bsz' \equiv_p 0\\ \bsh\cdot\bsz \equiv_q 0 }}
 \ralpha^{-2}(\bsh)
 = \sum_{q \in \Pn} \sum_{\substack{p\in\Pn \\ p>q}}
     \sum_{\substack{\bsh \in \Z^s \\ h_s \neq 0 \\ h_s\equiv_p 0
     \\ \bsh'\cdot\bsz' \equiv_p 0\\ \bsh\cdot\bsz \equiv_q 0 }}
     \ralpha^{-2}(\bsh)
 = \sum_{p \in \Pn} \sum_{\substack{q\in\Pn \\ q>p}}
     \sum_{\substack{\bsh \in \Z^s \\ h_s \neq 0 \\ h_s\equiv_q 0
     \\ \bsh\cdot\bsz \equiv_p 0 \\ \bsh'\cdot\bsz' \equiv_q 0}}
     \ralpha^{-2}(\bsh).
\end{align*}
Using this last expression in place of the third term in \eqref{eq:bad}
leads to
\begin{align*}
 \thetaquant(z_s)
 = \frac{1}{|\Pn|^2}\sum_{p \in \Pn} T_s^{(p)}(z_s^{(p)}),
\end{align*}
with
\begin{align} \label{eq:Td}
 T_s^{(p)}(z_s^{(p)})
 &= T_{s,\alpha,\bsgamma,n,\bsz'}^{(p)}(z_s^{(p)}; \{ z_s^{(q)} \}_{q \in \Pn ,\, q < p}) \notag\\
 &:= \underbrace{\sum_{\substack{\bsh \in \Z^s \\ h_s \neq 0 \\ \bsh\cdot\bsz \equiv_p 0}}
     \ralpha^{-2}(\bsh)}_{\theta_s^{(p)}(z_s^{(p)})}
     + 2 \sum_{\substack{q\in\Pn \\ q<p}}
     \sum_{\substack{\bsh \in \Z^s \\ h_s \neq 0 \\ h_s\not\equiv_p 0
     \\ \bsh\cdot\bsz \equiv_p 0 \\ \bsh\cdot\bsz \equiv_q 0}}
     \ralpha^{-2}(\bsh)
     + 2 \sum_{\substack{q\in\Pn \\ q>p}}
     \sum_{\substack{\bsh \in \Z^s \\ h_s \neq 0 \\ h_s\equiv_q 0 \\ \bsh\cdot\bsz \equiv_p 0 \\ \bsh'\cdot\bsz' \equiv_q 0}}
     \ralpha^{-2}(\bsh),
\end{align}
where now the third term does depend on $z_s^{(p)}$ as well as the
``future'' primes $q>p$, but conveniently it does not depend on the future
elements $z_s^{(q)}$ for $q>p$ which we are yet to choose. We can now use
this expression in a component-by-component and prime-by-prime manner to determine
the generating vector.

\begin{theorem}\label{thm:constructive_theorem}
Let $d \in \N$, $\alpha>1/2$, positive weights $\bsgamma = \{
\gamma_\setu \}_{\setu \subset \N}$, $\tau\in (0,1)$ and $n \ge 2$.
For each $s = 1,\ldots,d$ and then each prime $p\in\Pn$ in increasing
order, with the previously chosen components $\bsz'_\star =
(z_1^\star,\ldots,z_{s-1}^\star)$ and elements $z_{s,\star}^{(q)}$ for
$q<p$ held fixed, we choose $z_{s,\star}^{(p)}$ from the set
$\widetilde{G}^{(p)}_{s,\tau,\bsz'_\star}$ defined in \eqref{eq:Gtilde} to
minimise $T^{(p)}_s(z_s^{(p)}; \{ z_{s,\star}^{(q)} \}_{q \in \Pn ,\, q <
p})$ defined in~\eqref{eq:Td}. Then the randomised error for integration
of functions from the space $\spaceH$ using the randomised lattice
algorithm $\randpstarzstar$, with the such constructed generating vector
$\bsz^\star =(z_1^\star,\ldots,z_d^\star)$, has
$$
  \eran{d}(\randpstarzstar)
  \le
  \frac{\big(C_{\tau,\lambda}\,\ln n\big)^{1/2}}{n^{\lambda+1/2}}
  \Big[\muquant\Big]^\lambda
  \qquad
    \forall \lambda \in [1/2,\alpha)
  ,
$$
where $C_{\tau,\lambda} > 0$ goes to infinity for $\tau \to 0$ or for $\tau \to 1$.
    The quantity $\muquant$, as defined in~\eqref{eq:mu_quantity}, goes to infinity for $\lambda \to \alpha$.
    The upper bound is independent of the dimension if the weights $\bsgamma$ are such that
    $$
    \sum_{\emptyset \neq \fraku \subset \N} \gamma_{\fraku}^{1/\lambda} \left(2\zeta(\alpha/\lambda) \right)^{|\fraku|}
    <
    \infty
    .
    $$
\end{theorem}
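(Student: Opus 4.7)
The plan is an induction on the dimension $s$, exploiting the additive decomposition~\eqref{eq:cbc2}
$$
  \left[\eran{s}(\randpstarzstars)\right]^2
  =
  \left[\eran{s-1}(\randpstarsminusone)\right]^2 + \thetaquant(z_s^\star).
$$
Phrasing the target as $[\eran{s}]^2 \le (C_{\tau,\lambda}\,\ln n / n^{2\lambda+1})\,[\muquant]^{2\lambda}$ and combining the induction hypothesis for $s-1$ with the elementary inequality $a^{2\lambda}+b^{2\lambda}\le(a+b)^{2\lambda}$ (valid for $a,b\ge 0$ and $2\lambda\ge 1$) together with the identity $\muquant - \mu_{s-1,\alpha,\bsgamma}(\lambda) = \sum_{h_s\ne 0}\ralpha^{-1/\lambda}(\bsh)$, reduces the whole proof to the inductive step
$$
  \thetaquant(z_s^\star)
  \;\le\;
  \frac{C_{\tau,\lambda}\,\ln n}{n^{2\lambda+1}}
  \bigg(\sum_{\substack{\bsh\in\bbZ^s\\ h_s\ne 0}}\ralpha^{-1/\lambda}(\bsh)\bigg)^{2\lambda}.
$$

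To establish this inductive step, I would write $\thetaquant(z_s^\star) = |\Pn|^{-2}\sum_{p\in\Pn} T^{(p)}_s(z_{s,\star}^{(p)})$ via~\eqref{eq:Td}, and then exploit the argmin property of the construction: for each $p$, since $z_{s,\star}^{(p)}$ minimises $T^{(p)}_s$ over the set $\widetilde{G}^{(p)}_{s,\tau,\bsz'_\star}$ (of cardinality at least $\lceil\tau p\rceil$ by Lemma~\ref{lemma:Gtilde_are_large}), the value $T^{(p)}_s(z_{s,\star}^{(p)})$ is at most the average of $T^{(p)}_s$ over this set. Before averaging, I would apply Lemma~\ref{lemma:lower_bound_on_r_cbc} to replace each $\ralpha^{-2}(\bsh)$ in $T^{(p)}_s(z_s^{(p)})$ (whose three constituent sums all have $h_s\ne 0$ and $\bsh\cdot\bsz\equiv_p 0$) by $\widetilde{B}_{s,n,\tau}^{-(2-1/\lambda)}\,\ralpha^{-1/\lambda}(\bsh)$; this is what produces the key $n^{-(2\lambda-1)}$ scaling through $\widetilde{B}_{s,n,\tau}\sim n^\lambda$, mirroring the existence proof.

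The averaging of each of the three pieces over $z_s^{(p)}\in\widetilde{G}^{(p)}_{s,\tau,\bsz'_\star}$ would be performed using a one-component analog of Lemma~\ref{lemma:averaging_over_vecs}: for $h_s\ne 0$ and fixed $\bsh',\bsz'_\star$, the average of $\bbone(\bsh\cdot\bsz\equiv_p 0)$ is at most $\bbone(h_s\equiv_p 0)\bbone(\bsh'\cdot\bsz'_\star\equiv_p 0)+1/(\tau p)$. The first piece (the standalone $\theta^{(p)}_s$ term) is controlled directly by the defining inequality of $\widetilde{G}^{(p)}_{s,\tau,\bsz'_\star}$; the third piece (indexed by $q>p$, carrying $h_s\equiv_q 0$) is independent of the previously chosen $z_{s,\star}^{(q)}$, and a one-component version of property~\eqref{eq:ralpha-property}, namely $\ralpha((qm,\bsh'))=q^\alpha\,\ralpha((m,\bsh'))$, pulls out a factor $q^{-\alpha/\lambda}$ that renders the sum over $q\in\Pn$ convergent; the second piece (indexed by $q<p$) still depends on $z_{s,\star}^{(q)}\in\widetilde{G}^{(q)}_{s,\tau,\bsz'_\star}$, but its inner structure is essentially a portion of $\theta^{(q)}_s(z_{s,\star}^{(q)})$ which the defining inequality of $\widetilde{G}^{(q)}$ bounds uniformly by $(2/((1-\tau)q))^{2\lambda}(\sum_{h_s\ne 0}\ralpha^{-1/\lambda})^{2\lambda}$.

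After combining the three contributions and summing over $p\in\Pn$, using $p,q>n/2$ together with the lower bound $|\Pn|>c'n/\ln n$ from~\eqref{eq:PNT_lowerbound}, the harmonic-type sum $\sum_{p\in\Pn}1/p = O(\ln n/n)$ contributes the expected single factor of $\ln n$. The main obstacle is the careful bookkeeping of the three cross-term pieces and, in particular, ensuring that the residual $z_{s,\star}^{(q)}$-dependence in the second piece can be absorbed by the $\widetilde{G}^{(q)}$ bound without producing a spurious factor of $|\Pn|$---this is exactly what the rewriting in~\eqref{eq:Td} (swapping the roles of $p$ and $q$ in what would otherwise have been an uncontrolled third term in~\eqref{eq:bad}) was designed to enable. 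As the last step, choosing $\tau=\tau^\star$ as in Theorem~\ref{theorem:existence_of_optimal_vector} optimises the resulting constant $C_{\tau,\lambda}$.
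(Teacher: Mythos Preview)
Your outline follows the paper's proof closely: the same induction on $s$ via~\eqref{eq:cbc2}, the rewriting $\thetaquant = |\Pn|^{-2}\sum_p T^{(p)}_s$, the minimiser-below-average argument over $\widetilde{G}^{(p)}_s$, the three-term analysis, and Jensen's inequality for recombination. Your treatment of pieces one and two (via the defining inequalities of $\widetilde{G}^{(p)}$ and $\widetilde{G}^{(q)}$) coincides with the paper's bounds on $A_1$ and $A_2$.

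There is, however, an internal inconsistency you should resolve. You say you apply Lemma~\ref{lemma:lower_bound_on_r_cbc} to ``replace each $\ralpha^{-2}(\bsh)$'' in \emph{all three} sums of $T^{(p)}_s$ before averaging, but then for pieces one and two you invoke the defining inequalities of $\widetilde{G}^{(p)}$ and $\widetilde{G}^{(q)}$, which are bounds on $\sum\ralpha^{-2}$, not on $\sum\ralpha^{-1/\lambda}$. If you genuinely convert piece two to $\ralpha^{-1/\lambda}$ first, the residual sum $\sum_{h_s\ne0,\,\bsh\cdot\bsz\equiv_q0}\ralpha^{-1/\lambda}(\bsh)$ is \emph{not} controlled by the $\widetilde{G}^{(q)}$ inequality, and simply dropping the $q$-congruence yields only $O(n^{-2\lambda})$, short of the required $O(n^{-(2\lambda+1)})$. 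The remedy is what your piece-by-piece description in fact implies: keep pieces one and two in the $\ralpha^{-2}$ form and bound them via the defining inequalities; reserve the $\widetilde{B}$-substitution for piece three only. This is exactly the paper's split into $A_1,A_2,A_3$. For piece three your route (invoke $\widetilde{B}$ through the condition $\bsh\cdot\bsz\equiv_p 0$, then pull out $q^{-\alpha/\lambda}$ from $h_s\equiv_q 0$) works and is arguably tidier than the paper's, which instead artificially reinstates $\bsh\cdot\bsz\equiv_q 0$ (licit because $h_s\equiv_q 0$) and applies Lemma~\ref{lemma:lower_bound_on_r_cbc} with respect to~$q$; both give the same order.

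One further small correction: your final remark about choosing $\tau=\tau^\star$ as in Theorem~\ref{theorem:existence_of_optimal_vector} does not belong here. Theorem~\ref{thm:constructive_theorem} retains $\tau$ as a free parameter in $C_{\tau,\lambda}$ and does not optimise over it.
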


\begin{proof}
For $s \ge 2$, suppose we have constructed the first $s-1$ components of the vector
$\bsz' = (z_1,\ldots,z_{s-1})$ and are now looking to determine $z_s \cong
\langle z_s^{(p_1)}, \ldots, z_s^{(p_L)}\rangle$. Our inductive hypothesis
is that
\begin{align*}
  \left[\eran{s-1}(\randpstarsminusone)\right]^2
  \le \frac{C_{\tau,\lambda}\,\ln n}{n^{2\lambda+1}}
  \bigg(\sum_{\bsh' \in \Z^{s-1}\setminus\{\bszero\}} \ralpha^{-1/\lambda}(\bsh')\bigg)^{2\lambda}
\end{align*}
for all $\lambda \in [1/2,\alpha)$. We will return to specify
$C_{\tau,\lambda}$ at the end.

As shown in \eqref{eq:cbc2}, we can write
\begin{align*}
 \left[\eran{s}(\randpstarzs)\right]^2
 &= \left[\eran{s-1}(\randpstarsminusone)\right]^2 + \thetaquant(z_s) \\
 &= \left[\eran{s-1}(\randpstarsminusone)\right]^2
 + \frac{1}{|\Pn|^2} \sum_{p \in \Pn} \Tquant{p}(z_s^{(p)}).
\end{align*}
For each $p$, we choose $z_{s,\star}^{(p)}$ from the ``good'' set
$\widetilde{G}^{(p)}_s$, defined in~\eqref{eq:Gtilde}, to minimise
$T_s^{(p)}(z_s^{(p)})$ defined in~\eqref{eq:Td}. Note that all previous
components and residues modulo~$q$, for $q < p$ in the $d$th dimension,
are the ones we previously selected, i.e., we choose $z_{s,\star}^{(p)}$
to minimise $T_s^{(p)}(z_s^{(p)}) = T_s^{(p)}(z_s^{(p)}; \{
z_{s,\star}^{(q)} \}_{q \in \Pn ,\, q < p})$ and $\bsz$ and $\bsz'$
in~\eqref{eq:Td} consist of the previously selected ones, except for~$z_s^{(p)}$. If we choose $z_{s,\star}^{(p)} \in \widetilde{G}^{(p)}_s$ to
minimise this, then it will be at least as good as the average, i.e.,
\begin{align*}
 T_s^{(p)}(z_{s,\star}^{(p)})
 \le \frac{1}{|\widetilde{G}^{(p)}_s|} \sum_{z_s^{(p)} \in \widetilde{G}^{(p)}_s} T_s^{(p)}(z_s^{(p)}).
\end{align*}
Thus, after doing this for every $p\in\Pn$, we can sum up the upper bounds
with respect to $p$ to obtain, with $z_{s,\star} \cong \langle
z_{s,\star}^{(p_1)}, \ldots, z_{s,\star}^{(p_L)}\rangle$,
\[
 \thetaquant(z_{s,\star})
 = \frac{1}{|\Pn|^2}\sum_{p \in \Pn} T_s^{(p)}(z_{s,\star}^{(p)})
 \le \frac{1}{|\Pn|^2}\sum_{p \in \Pn}
 \frac{1}{|\widetilde{G}^{(p)}_s|} \sum_{z_s^{(p)} \in \widetilde{G}^{(p)}_s} T_s^{(p)}(z_s^{(p)})
 =  A_1 + A_2 + A_3,
\]
with $A_1$, $A_2$, $A_3$ corresponding to averages with respect to the
three terms in \eqref{eq:Td}.

We have
\begin{align*}
 A_1
 &:= \frac{1}{|\Pn|^2}\sum_{p \in \Pn}
 \frac{1}{|\widetilde{G}^{(p)}_s|} \sum_{z_s^{(p)} \in \widetilde{G}^{(p)}_s} \theta_s^{(p)}(z_s^{(p)})
 \\
 &\,\le \frac{1}{|\Pn|^2}\sum_{p \in \Pn}
 \bigg(\frac{2}{(1-\tau)p}
 \sum_{\substack{\bsh \in \Z^s \\ h_s \ne 0}} \ralpha^{-1/\lambda}(\bsh)\bigg)^{2\lambda}
 \le \frac{\ln n}{n^{2\lambda+1}}
 \frac{2^{4\lambda}}{\constPn\,(1-\tau)^{2\lambda}}
 \bigg(\sum_{\substack{\bsh \in \Z^s \\ h_s \neq 0}} \ralpha^{-1/\lambda}(\bsh)\bigg)^{2\lambda},
\end{align*}
where in the first inequality we used the bound in \eqref{eq:Gtilde} which
holds for all $z_s^{(p)}\in \widetilde{G}_s^{(p)}$ and then the average
over $z_s^{(p)}$ dropped out; and in the second inequality we used
$1/|\Pn|\le \ln n/(\constPn \ln n)$ from~\eqref{eq:PNT_lowerbound} and $1/p < 2/n$.

Next we we have
\begin{align*}
 A_2
 &:= \frac{2}{|\Pn|^2}\sum_{p \in \Pn}
 \frac{1}{|\widetilde{G}^{(p)}_s|} \sum_{z_s^{(p)} \in \widetilde{G}^{(p)}_s}
 \sum_{\substack{q\in\Pn \\ q < p}}
 \sum_{\substack{\bsh \in \Z^s \\ h_s \neq 0 \\ h_s\not\equiv_p 0 \\
 \bsh\cdot\bsz \equiv_p 0\\ \bsh\cdot\bsz \equiv_q 0}}
 \ralpha^{-2}(\bsh)
 \le \frac{2}{|\Pn|^2}\sum_{p \in \Pn}
 \frac{1}{|\widetilde{G}^{(p)}_s|} \sum_{\substack{q\in\Pn \\ q < p}}
 \underbrace{\sum_{\substack{\bsh \in \Z^s \\ h_s \neq 0 \\ \bsh\cdot\bsz \equiv_q 0}}
 \ralpha^{-2}(\bsh)}_{\theta_s^{(q)}(z_s^{(q)})}
 \\
 &\,\le \frac{2}{|\Pn|^2}\sum_{p \in \Pn}
 \frac{1}{\tau p} \sum_{\substack{q\in\Pn \\ q < p}}
 \bigg(\frac{2}{(1-\tau)q} \sum_{\substack{\bsh \in \Z^s \\ h_s \neq 0}} \ralpha^{-1/\lambda}(\bsh)\bigg)^{2\lambda}
 \le
 \frac{1}{n^{2\lambda+1}} \frac{2^{4\lambda+1}}{\tau(1-\tau)^{2\lambda}}
 \bigg(\sum_{\substack{\bsh \in \Z^s \\ h_s \neq 0}} \ralpha^{-1/\lambda}(\bsh)\bigg)^{2\lambda},
\end{align*}
where in the first inequality we noted that for $h_s\not\equiv_p 0$ there
is at most one value of $z_s^{(p)}\in \widetilde{G}_s^{(p)}$ satisfying
$\bsh\cdot\bsz\equiv_p 0$ and then we dropped the condition
$h_s\not\equiv_p 0$; in the second inequality we applied the bound
\eqref{eq:Gtilde} since $z_s^{(q)}\in \widetilde{G}_s^{(q)}$ for each
$q<p$ and we used $|\widetilde{G}_s^{(p)}|\ge \tau p$; and in the final
inequality we used $1/p <2/n$, $1/q <2/n$ and then the double sum over the
primes dropped out.

It remains to tackle the most complicated term $A_3$. We have
\begin{align} \label{eq:A3}
 A_3
 &:= \frac{2}{|\Pn|^2}\sum_{p \in \Pn}
 \frac{1}{|\widetilde{G}^{(p)}_s|} \sum_{z_s^{(p)} \in \widetilde{G}^{(p)}_s}
 \sum_{\substack{q\in\Pn \\ q>p}}
 \sum_{\substack{\bsh \in \Z^s \\ h_s \neq 0 \\ h_s\equiv_q 0 \\ \bsh\cdot\bsz \equiv_p 0 \\ \bsh'\cdot\bsz' \equiv_q 0}}
 \ralpha^{-2}(\bsh)
 \notag\\
 &\,\le \frac{2}{|\Pn|^2}\sum_{p \in \Pn} \sum_{\substack{q\in\Pn \\ q > p}} \bigg[
  \sum_{\substack{\bsh \in \Z^s \\ h_s \neq 0 \\ h_s\equiv_p 0 \\ h_s\equiv_q 0 \\ \bsh'\cdot\bsz'\equiv_p 0 \\ \bsh'\cdot\bsz' \equiv_q 0}}
  \ralpha^{-2}(\bsh)
 + \frac{1}{|\widetilde{G}^{(p)}_s|}
  \sum_{\substack{\bsh \in \Z^s \\ h_s \neq 0 \\ h_s\not\equiv_p 0 \\ h_s\equiv_q 0 \\ \bsh'\cdot\bsz' \equiv_q 0}}
  \ralpha^{-2}(\bsh)
  \bigg],
\end{align}
where we used the properties that for $h_s\equiv_p 0$ the condition
$\bsh\cdot\bsz\equiv_p 0$ is independent of $z_s^{(p)}$ so the average
dropped out, while for $h_s\not\equiv_p 0$ there is at most one $z_s^{(p)}
\in \widetilde{G}^{(p)}_s$ satisfying $\bsh\cdot\bsz\equiv_p 0$.

To proceed further, one may be tempted to pull out factors of $p$ and $q$
from those $h_s$ which are multiples of $p$ and/or $q$. A quick
calculation reveals that this would not yield the correct convergence
order. To get the full convergence order we will need to make use of the
property $\ralpha(\bsh)> \widetilde{B}_{s,n,\tau}$, see \eqref{eq:Btilde},
which holds for all $\bsh$ satisfying $h_s\ne 0$ and
$\bsh\cdot\bsz\equiv_q 0$ provided that
$z_s^{(q)}\in\widetilde{G}^{(q)}_s$. Note that the expression inside the
square brackets in \eqref{eq:A3} does not explicitly depend on
$z_s^{(q)}$, but we can introduce $z_s^{(q)}$ artificially since
$h_s\equiv_q 0$. We obtain
\begin{align*}
 A_3
 &\le \frac{2}{|\Pn|^2}\sum_{p \in \Pn} \sum_{\substack{q\in\Pn \\ q > p}} \bigg[
  \sum_{\substack{\bsh \in \Z^s \\ h_s \neq 0 \\ h_s\equiv_p 0 \\ h_s\equiv_q 0 \\
  \bsh\cdot\bsz \equiv_q 0 \\ \ralpha(\bsh) > \widetilde{B}_{s,n,\tau}}}
  \ralpha^{-2}(\bsh)
 + \frac{1}{\tau p}
  \sum_{\substack{\bsh \in \Z^s \\ h_s \neq 0 \\ h_s\not\equiv_p 0 \\ h_s\equiv_q 0 \\
  \bsh\cdot\bsz \equiv_q 0 \\ \ralpha(\bsh) > \widetilde{B}_{s,n,\tau}}}
  \ralpha^{-2}(\bsh)
  \bigg],
\end{align*}
where we also dropped the condition $\bsh'\cdot\bsz'\equiv_p 0$ from the
first sum inside the square brackets and added the extra property
$\ralpha(\bsh)> \widetilde{B}_{s,n,\tau}$ to both sums. Note importantly
that we can only add this property to the sums before we make any
substitution of $\bsh$.

Now we use $1 \le (\ralpha(\bsh)/\widetilde{B}_{s,n,\tau})^{2-1/\lambda}$
inside the sums over $\bsh$ and then drop both conditions
$\bsh\cdot\bsz\equiv_q 0$ and $\ralpha(\bsh)> \widetilde{B}_{s,n,\tau}$,
to arrive at
\begin{align*}
 A_3
 &\le \frac{2}{|\Pn|^2} \sum_{p \in \Pn} \sum_{\substack{q \in \Pn \\ q>p}}
 \bigg[
 \sum_{\substack{\bsh \in \Z^s \\ h_s \neq 0 \\ h_s\equiv_p 0 \\ h_s\equiv_q 0}}
 \ralpha^{-2}(\bsh)
 \bigg(\frac{\ralpha(\bsh)}{\widetilde{B}_{s,n,\tau}}\bigg)^{2-1/\lambda}
 + \frac{1}{\tau p}
 \sum_{\substack{\bsh \in \Z^s \\ h_s \neq 0 \\ h_s\not\equiv_p 0 \\ h_s\equiv_q 0}}
 \ralpha^{-2}(\bsh)
 \bigg(\frac{\ralpha(\bsh)}{\widetilde{B}_{s,n,\tau}}\bigg)^{2-1/\lambda}
 \bigg]
 \\
 &\le \frac{1}{\widetilde{B}_{s,n,\tau}^{2-1/\lambda}} \frac{2}{|\Pn|^2} \sum_{p \in \Pn}
 \sum_{\substack{q \in \Pn \\ q>p}}
 \bigg[
 \sum_{\substack{\bsh \in \Z^s \\ h_s \neq 0 \\ h_s\equiv_p 0 \\ h_s\equiv_q 0}}
 \ralpha^{-1/\lambda}(\bsh)
 + \frac{1}{\tau p}
 \sum_{\substack{\bsh \in \Z^s \\ h_s \neq 0 \\ h_s\not\equiv_p 0\\ h_s\equiv_q 0}}
 \ralpha^{-1/\lambda}(\bsh)
 \bigg]
 .
\end{align*}
With the condition $\ralpha(\bsh)> \widetilde{B}_{s,n,\tau}$ properly
taken into account, we are now ready to pull out factors of $p$ and $q$.
Substituting $h_s = pq\, h_s^{\rm new}$ and $h_s = q\, h_s^{\rm new}$ in
the first and second sums over~$\bsh$, respectively, and then relabelling,
followed by collecting the resulting terms and applying the bound on
$\widetilde{B}_{s,n,\tau}$ in~\eqref{eq:Btilde}, we obtain
\begin{align*}
 A_3
 &\le \frac{1}{\widetilde{B}_{s,n,\tau_1}^{2-1/\lambda}} \frac{2}{|\Pn|^2} \sum_{p \in \Pn}
 \sum_{\substack{q \in \Pn \\ q>p}}
 \bigg[ \frac{1}{(pq)^{\alpha/\lambda}}
 \sum_{\substack{\bsh \in \Z^s \\ h_s \neq 0}}
 \ralpha^{-1/\lambda}(\bsh)
 + \frac{1}{\tau p\,q^{\alpha/\lambda}}
 \sum_{\substack{\bsh \in \Z^s \\ h_s \neq 0}}
 \ralpha^{-1/\lambda}(\bsh)
 \bigg]
 \\
 &\le
 \bigg(\frac{1}{n}\frac{4}{1-\tau}\sum_{\substack{\bsh \in \Z^s \\ h_s \neq 0}} \ralpha^{-1/\lambda}(\bsh)\bigg)^{2\lambda-1}
 \frac{4}{n^2} \bigg(1+\frac{1}\tau\bigg)
 \bigg(\sum_{\substack{\bsh \in \Z^s \\ h_s \neq 0}} \ralpha^{-1/\lambda}(\bsh)\bigg)
 \\
 &= \frac{2^{4\lambda} (1+\tau)}{\tau(1-\tau)^{2\lambda-1}} \frac{1}{n^{2\lambda+1}}
 \bigg(\sum_{\substack{\bsh \in \Z^s \\ h_s \neq 0}} \ralpha^{-1/\lambda}(\bsh)\bigg)^{2\lambda}.
\end{align*}

Combining all bounds, we conclude that for $z_{s,\star} \cong \langle
z_{s,\star}^{(p_1)}, \ldots, z_{s,\star}^{(p_L)}\rangle$ we have
\begin{align*}
 \thetaquant(z_{s,\star})
 &\le \frac{C_{\tau,\lambda}\,\ln n}{n^{2\lambda+1}}
  \bigg(\sum_{\substack{\bsh \in \Z^s \\ h_s \neq 0}} \ralpha^{-1/\lambda}(\bsh)\bigg)^{2\lambda},
\end{align*}
with
\begin{align*}
 C_{\tau,\lambda} :=
 \frac{2^{4\lambda}}{\constPn(1-\tau)^{2\lambda}}
 + \frac{2^{4\lambda+1}}{\tau(1-\tau)^{2\lambda}}
 + \frac{2^{4\lambda} (1+\tau)}{\tau(1-\tau)^{2\lambda-1}}.
\end{align*}

Using the induction hypothesis, we can now recombine using Jensen's
inequality to get
\begin{align*}
  \left[\eran{s}(\randpstarzstars)\right]^2
  &\leq
  \frac{C_{\tau,\lambda}\,\ln n}{n^{2\lambda+1}}
  \bigg( \sum_{\bsh' \in \Z^{s-1}\setminus\{\bszero\}} \ralpha^{-1/\lambda}(\bsh')\bigg)^{2\lambda}
  +
  \frac{C_{\tau,\lambda}\,\ln n}{n^{2\lambda+1}}
  \bigg(\sum_{\substack{\bsh \in \Z^s \\ h_s \neq 0}} \ralpha^{-1/\lambda}(\bsh)\bigg)^{2\lambda}
  \\
  &\leq
  \frac{C_{\tau,\lambda}\,\ln n}{n^{2\lambda+1}}
  \bigg( \sum_{\bsh \in \Z^{s} \setminus \{\bszero\}} \ralpha^{-1/\lambda}(\bsh)\bigg)^{2\lambda}
  .
\end{align*}
This completes the induction step of the proof.

Finally we return to verify the base case. By setting $z_1 = 1$, we can see that,
\begin{align*}
  \left[\eran{1}(\randpstarone)\right]^2
  &=
  \frac{1}{|\Pn|^2} \sum_{p,q \in \Pn} \sum_{\substack{h \in \Z\setminus\{0\} \\ h \equiv_p 0 \\ h \equiv_q 0}} \ralpha^{-2}(h)
  \\
  &=
  \frac{1}{|\Pn|^2} \sum_{p \in \Pn} \sum_{\substack{h \in \Z\setminus\{0\} \\ h \equiv_p 0}} \ralpha^{-2}(h) + \frac{1}{|\Pn|^2} \sum_{\substack{p,q \in \Pn \\ p \neq q}} \sum_{\substack{h \in \Z\setminus\{0\} \\ h \equiv_p 0 \\ h \equiv_q 0}} \ralpha^{-2}(h)
  \\
  &\leq
  \frac{1}{|\Pn|^2} \sum_{p \in \Pn} \frac{1}{p^{2\alpha}} \sum_{h \in \Z\setminus\{0\}} \ralpha^{-2}(h) + \frac{1}{|\Pn|^2} \sum_{\substack{p,q \in \Pn \\ p \neq q}} \frac{1}{(pq)^{2\alpha}}\sum_{h \in \Z\setminus\{0\}} \ralpha^{-2}(h)
  \\
  &\leq
  \frac{2^{2\alpha}\ln n}{\constPn n^{2\alpha+1}} \sum_{h \in \Z\setminus\{0\}} \ralpha^{-2}(h) + \frac{2^{4\alpha}}{n^{4\alpha}}\sum_{h \in \Z\setminus\{0\}} \ralpha^{-2}(h)
  \\
  &\leq
  \inf_{\lambda \in [1/2,\alpha)}
  \frac{C_\lambda' \ln n}{n^{2\lambda + 1}} \Bigg(\sum_{h \in \Z\setminus\{0\}} \ralpha^{-1/\lambda}(h)\Bigg)^{2 \lambda}
  \numberthis \label{eq:dim1}
\end{align*}
where
$$
C_\lambda'
=
\frac{2^{2\lambda}}{\constPn} + 2^{4 \lambda+1}
.
$$
This shows that the base case is satisfied and therefore completes the proof.
\end{proof}

Theorem~\ref{thm:constructive_theorem} leads to
Algorithm~\ref{alg:constructing_z} below which we summarise
as follows. Fix $z_1 = 1$. For each dimension $s = 2,\ldots,d$
and for each prime $p \in \Pn$, we compute
$\theta^{(p)}_s(z_s^{(p)})$ and $T_s^{(p)}(z_s^{(p)})$ for all values of
$z_s^{(p)} \in \Z_p$. We then use a partial sort to find the $\left\lceil
\tau p \right\rceil$ values of $z_s^{(p)}$ which have the lowest
$\theta^{(p)}_s(z_s^{(p)})$ and fix $z_{s,\star}^{(p)}$ to be one of these
values which has the minimal value of $T_s^{(p)}(z_s^{(p)})$. The
algorithm here deviates from the presentation in the proof as the values
of $z_s^{(p)} \in \Z_p$ are not taken from the set of good components
but from a subset of this set (being those elements which give the lowest
$\left\lceil \tau  p \right\rceil$ values of
$\theta^{(p)}_{s,\alpha,\bsgamma,\bsz'}(z_s)$). These are identical
to the sets used in~\cite{DGS2022}, and as mentioned at the end of
Section~\ref{section:randomised_algorithms}, they can be used
interchangeably in the proof.

\begin{alg}{Construction of a near optimal vector for budget $n \in \N$}\label{alg:constructing_z}
  \begin{algorithmic}
    \STATE Set $z_1 = 1$
    \FOR{$s=2,\ldots,d$}
        \FOR{$p \in \Pn$}
            \STATE Compute $\theta^{(p)}_s(z_s^{(p)}) = \theta^{(p)}_{s,\alpha,\bsgamma,\bsz'_\star}(z_s^{(p)})$ for all $z_s^{(p)} \in \Z_p$
            \STATE Compute $T_s^{(p)}(z_s^{(p)}) = T_{s,\alpha,\bsgamma,n,\bsz'_\star}^{(p)}(z_s^{(p)}; \{ z_{s,\star}^{(q)} \}_{q \in \Pn ,\, q < p})$ for all $z_s^{(p)} \in \Z_p$
            \STATE From the $\lceil \tau p \rceil$ best choices for $\theta^{(p)}_s(z_s)$, set $z_{s,\star}^{(p)}$ to be the one which minimises $T_s(z_s^{(p)})$
        \ENDFOR
    \ENDFOR
  \end{algorithmic}
\end{alg}

\subsection{Analysis of computation and complexity with product weights}\label{section:complexity_analysis}

We now look at the practical side of constructing generating vectors using Algorithm~\ref{alg:constructing_z} and analyse the asymptotic complexity of this algorithm. We will restrict our attention to product weights. These are defined as a sequence $\{\gamma_j\}_{j=1}^\infty$ and for $\fraku \subset \N$ we set
$$
\gamma_\fraku
=
\prod_{j \in \fraku} \gamma_j
,
$$
hence, for product weights, we have
$$
\ralpha(\bsh) = \prod_{j \in \supp(\bsh)} \gamma_j^{-1} \, |h_j|^\alpha
.
$$

\begin{proposition}
  For $d \in \N$, $\alpha > 1/2$, positive product weights $\{\gamma_j\}_{j=1}^\infty$, $\tau \in (0,1)$ and $n \ge 2$, Algorithm~\ref{alg:constructing_z} constructs the vector $\bsz^\star \in \Z^d$ that satisfies the bound of Theorem~\ref{thm:constructive_theorem} in time $\calO(d n^4 / \ln n)$ and using $\calO(n^4 / (\ln n)^2)$ memory. The generating vector can be used for any $d' \le d$, satisfying the bound of Theorem~\ref{thm:constructive_theorem} for that number of dimensions.
  The memory cost can be reduced to $\calO(n)$ at the expense of increasing the calculation cost by a factor~$d$.
\end{proposition}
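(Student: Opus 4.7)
The plan is to establish the three claims---the error bound, the running time, and the memory cost---by direct bookkeeping of the work done in Algorithm~\ref{alg:constructing_z}. Correctness reduces to Theorem~\ref{thm:constructive_theorem}: the algorithm performs exactly the greedy CBC-and-prime-by-prime minimisation of $T_s^{(p)}(z_s^{(p)})$ that the theorem analyses, with the only deviation being that the candidate set is the $\lceil\tau p\rceil$ values of $z_s^{(p)}$ with smallest $\theta_s^{(p)}$-value, rather than the threshold set $\widetilde{G}_{s,\tau,\bsz'_\star}^{(p)}$ of~\eqref{eq:Gtilde}. By Proposition~\ref{proposition:bound_on_theta} every element of the former set lies in $\widetilde{G}_{s,\tau,\bsz'_\star}^{(p)}$, so the inductive step of Theorem~\ref{thm:constructive_theorem} still applies verbatim (this is the same interchange discussed at the end of Section~\ref{section:randomised_algorithms} and in~\cite{DGS2022}). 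The reusability claim for any $d'\le d$ is immediate from the outer CBC loop: the first $d'$ components $z_1^\star,\ldots,z_{d'}^\star$ are produced identically whether the algorithm stops at dimension~$d'$ or continues beyond it.

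For the running-time analysis I would first exploit the product-weight factorisation $\ralpha^{-2}(\bsh)=\prod_j\psi_\alpha(h_j)$, with $\psi_\alpha(0)=1$ and $\psi_\alpha(h)=\gamma_j^2|h|^{-2\alpha}$ for $h\ne 0$, together with the one-periodic kernel $K_\alpha(x):=\sum_{h\ne 0}\psi_\alpha(h)\,\rme^{2\pi\rmi hx}$ which admits a closed form (a Bernoulli polynomial when $\alpha\in\N$, otherwise two polylogarithms). For fixed prime~$p$, the single-congruence term $\theta_s^{(p)}(z_s^{(p)})$ is the familiar fast-CBC quantity, and all $p$ values as $z_s^{(p)}$ varies can be produced in $\calO(p\ln p)$ operations by the circulant/FFT trick of~\cite{NC2006-1,NC2006-2}, after maintaining a running product array built from kernel evaluations at the already-chosen components $\bsz'_\star$. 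For the double-congruence terms in $T_s^{(p)}$, I would absorb each $h_s\equiv_p 0$ or $h_s\equiv_q 0$ constraint by the substitution $h_s\mapsto p h_s$ or $h_s\mapsto q h_s$, peeling off a factor $p^{-2\alpha}$ or $q^{-2\alpha}$, and reduce each inner sum to a lattice sum modulo a single integer ($p$, $q$, or $pq$). For each pair $(p,q)\in\Pn\times\Pn$ the resulting expression can then be evaluated at all $p$ values of $z_s^{(p)}$ in $\calO(pq\ln(pq))=\calO(n^2\ln n)$ operations by the same FFT ideas, reusing stored kernel evaluations at $\bsz'_\star$ and at $\{z_{s,\star}^{(q)}\}$.

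Summing these contributions gives the stated complexity. With $|\Pn|=\Theta(n/\ln n)$ by~\eqref{eq:PNT_lowerbound}, the per-dimension work is $\calO\big(|\Pn|^2\,n^2\ln n\big)=\calO(n^4/\ln n)$, hence $\calO(dn^4/\ln n)$ in total. The dominant data structure is the table of double-congruence kernel values indexed by $(p,q,k)$ with $k\in\bbZ_p$, of size $\calO(|\Pn|^2\,n)=\calO(n^4/(\ln n)^2)$; the generating vector itself, stored in CRT form, only occupies $\calO(d|\Pn|)$ words. If this intermediate table is not kept across dimensions, the inner kernel sums must be rebuilt from scratch at each dimension $s$, which multiplies the time by a factor~$d$ while shrinking the working memory to the $\calO(n)$ needed to hold one kernel table at a time; this is the announced trade-off. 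The main obstacle will be the careful treatment of the $q>p$ contribution to~$T_s^{(p)}$, since it mixes data from primes not yet processed; the key saving---already exploited in the proof of Theorem~\ref{thm:constructive_theorem}---is that once the substitution $h_s\mapsto q h_s$ absorbs $h_s\equiv_q 0$, the inner sum loses its dependence on any $z_s^{(q)}$, so all pairs $(p,q)$ decouple into independent single-modulus lattice sums that can be handled within the budget above.
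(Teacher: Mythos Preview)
Your approach is essentially the paper's: factorise the sums in $T_s^{(p)}$ into one-dimensional kernel products via the character-sum identity, store the running products $P^{(p,q)}_{s-1}$, and evaluate all candidates $z_s^{(p)}\in\bbZ_p$ by FFT convolutions, bookkeeping the cost over the $|\Pn|^2$ prime pairs. Two slips are worth fixing.

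First, the middle term of~\eqref{eq:Td} carries the constraint $h_s\not\equiv_p 0$, not $h_s\equiv_p 0$, so it cannot be ``absorbed by substitution''. The paper deals with this by replacing $T_s^{(p)}$ with a surrogate $\widehat{T}_s^{(p)}$ that simply drops the $h_s\not\equiv_p 0$ condition: the discrepancy consists of terms with $h_s\equiv_p 0$, for which $\bsh\cdot\bsz\equiv_p 0$ reduces to $\bsh'\cdot\bsz'\equiv_p 0$ and is independent of $z_s^{(p)}$, so the minimiser over $z_s^{(p)}$ is unchanged. You should note this step explicitly, as otherwise the middle term does not reduce cleanly to a lattice sum modulo~$pq$.

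Second, your memory count is mis-indexed. A table indexed by $(p,q,k)$ with $k\in\bbZ_p$ has size $|\Pn|^2\, n = \Theta(n^3/(\ln n)^2)$, not $n^4/(\ln n)^2$. The products that actually need to be stored, $P^{(p,q)}_{s-1}(k,\ell)=\prod_{j=1}^{s-1}\big(1+\gamma_j^2\,\sigma_\alpha(kz_j^{(p)}/p+\ell z_j^{(q)}/q)\big)$, depend on a pair $(k,\ell)\in\bbZ_p\times\bbZ_q$ for each $(p,q)$, giving $\sum_{p,q\in\Pn}pq=\Theta(|\Pn|^2\,n^2)=\Theta(n^4/(\ln n)^2)$, which is the bound you want.
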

\begin{proof}
From Algorithm~\ref{alg:constructing_z} we see that we have to compute the quantities $\theta^{(p)}_s(z_s^{(p)})$ and $T_s^{(p)}(z_s^{(p)})$ for all $z_s^{(p)} \in \Z_p$.

Let us first define the function for $x \in [0,1]$ and $\alpha > 1/2$
$$
\sigma_\alpha(x)
:=
\sum_{h \in \Z \setminus \{0\}}\frac{\Euler{}{h x}}{|h|^{2 \alpha}}
.
$$
We can see that, for integer $\alpha$,
$$
\sigma_\alpha(x)
=
\frac{(-1)^{\alpha+1} (2 \pi)^{2 \alpha}}{(2 \alpha)!} B_{2 \alpha}(x \bmod 1)
,
$$
where $B_{2 \alpha}$ is the Bernoulli polynomial of degree $2 \alpha$.
We will need these values for all $x = k/p$ with $k \in \Z_p$ for each $p \in \Pn$, and for all $x = (k/p + \ell/q) \bmod{1}$ with $k \in \Z_p$ and $\ell \in \Z_q$ for all $p, q \in \Pn$ with $p \ne q$. The last set is equivalent to $x = k' / (pq)$ for all $k' \in \Z_{pq}$ since $p$ and $q$ are relatively prime.
These values can be precomputed at the start of the construction and will use $\calO(n^4 / (\ln n)^2)$ memory.

For $\theta^{(p)}_s(z_s)$ we use the same recursive structure as for the standard fast CBC algorithm:
\begin{align}
  \theta^{(p)}_s(z_s)
  &=
  \sum_{\substack{\bsh\in\bbZ^s \\ h_s \ne 0 \\ \bsh\cdot\bsz\equiv_p 0}}
  \ralpha^{-2}(\bsh)
  =
  \sum_{\substack{\bsh\in\bbZ^s \\ h_s \ne 0}} \frac{1}{p} \sum_{k = 0}^{p-1} \rme^{2 \pi \rmi k \bsh \cdot \bsz / p} \,
  \ralpha^{-2}(\bsh)
  \nonumber \\
  &=
  \sum_{\substack{\bsh\in\bbZ^s \\ h_s \ne 0}} \frac{1}{p} \sum_{k = 0}^{p-1} \frac{\gamma_s^2 \, \rme^{2 \pi \rmi k h_s z_s / p}}{|h_s|^{2 \alpha}} \prod_{\substack{j = 1 \\ h_j \neq 0}}^{s-1} \frac{\gamma_j^2 \, \rme^{2 \pi \rmi k h_j z_j / p}}{|h_j|^{2 \alpha}}
  \nonumber \\
  &=
  \frac{1}{p} \sum_{k = 0}^{p-1} \gamma_s^2 \, \Bigg(\sum_{h \in \Z \setminus \{0\}}\frac{\rme^{2 \pi \rmi k h z_s / p}}{|h|^{2 \alpha}}\Bigg) \prod_{j = 1}^{s-1} \Bigg( 1 + \gamma_j^2  \sum_{h \in \Z \setminus \{0\}} \frac{\rme^{2 \pi \rmi k h z_j / p}}{|h|^{2 \alpha}} \Bigg)
  \nonumber \\
  &=
  \frac{\gamma_s^2}{p} \sum_{k = 0}^{p-1} \sigma_\alpha(k z_s / p)
  \underbrace{\prod_{j = 1}^{s-1} \left( 1 + \gamma_j^2 \, \sigma_\alpha(k z_j / p)\right)}_{=: P^{(p)}_{s-1}(k)}
  .
  \label{eq:compute_theta}
\end{align}
The last form can be recognised as a convolution and hence we can use an FFT method to calculate $\theta^{(p)}_s(z_s)$ for all $z_s \in \Z_p$ in $\calO(n \ln n)$, see~\cite{NC2006-1,NC2006-2}.
The storage of the products $P^{(p)}_{s-1}(k)$ for all $k \in \Z_p$ requires $\calO(n)$ memory, which over all $p \in \Pn$ requires thus $\calO(n^2/\ln n)$ memory.

To calculate $T_s^{(p)}(z_s^{(p)})$ we first recall its definition~\eqref{eq:Td},
$$
T_s^{(p)}(z_s^{(p)})
=
\theta_s^{(p)}(z_s^{(p)})
    + 2 \sum_{\substack{q\in\Pn \\ q<p}}
    \sum_{\substack{\bsh \in \Z^s \\ h_s \neq 0 \\ h_s\not\equiv_p 0
    \\ \bsh\cdot\bsz \equiv_p 0 \\ \bsh\cdot\bsz \equiv_q 0}}
    \ralpha^{-2}(\bsh)
    + 2 \sum_{\substack{q\in\Pn \\ q>p}}
    \sum_{\substack{\bsh \in \Z^s \\ h_s \neq 0 \\ h_s\equiv_q 0 \\ \bsh\cdot\bsz \equiv_p 0 \\ \bsh'\cdot\bsz' \equiv_q 0}}
    \ralpha^{-2}(\bsh)
.
$$
To simplify the computation of $T_j^{(p)}(z_j^{(p)})$, we will define $\widehat{T}_s^{(p)}(z_s^{(p)})$, which simply drops the condition $h_s \not\equiv_p 0$ on the second term, but does not alter the order of the $z_s^{(p)}$ values:
\begin{align*}
  \widehat{T}_s^{(p)}(z_s^{(p)})
  &:=
  \theta_s^{(p)}(z_s^{(p)})
     + 2 \sum_{\substack{q\in\Pn \\ q<p}}
     \sum_{\substack{\bsh \in \Z^s \\ h_s \neq 0
     \\ \bsh\cdot\bsz \equiv_p 0 \\ \bsh\cdot\bsz \equiv_q 0}}
     \ralpha^{-2}(\bsh)
     + 2 \sum_{\substack{q\in\Pn \\ q>p}}
     \sum_{\substack{\bsh \in \Z^s \\ h_s \neq 0 \\ h_s\equiv_q 0 \\ \bsh\cdot\bsz \equiv_p 0 \\ \bsh'\cdot\bsz' \equiv_q 0}}
     \ralpha^{-2}(\bsh)
  .
\end{align*}
We have already computed the first term in~\eqref{eq:compute_theta}.
The second term can be written as
\begin{align*}
  &2 \sum_{\substack{q\in\Pn \\ q<p}} \sum_{\substack{\bsh \in \Z^s \\ h_s \neq 0 \\ \bsh\cdot\bsz \equiv_p 0 \\ \bsh\cdot\bsz \equiv_q 0}} \ralpha^{-2}(\bsh)
  =
  2 \sum_{\substack{q\in\Pn \\ q<p}} \sum_{\substack{\bsh \in \Z^s \\ h_s \neq 0}} \frac{1}{p} \sum_{k = 0}^{p-1} \frac{1}{q} \sum_{\ell = 0}^{q-1} \Euler{}{k \bsh \cdot \bsz / p} \, \Euler{}{\ell \bsh \cdot \bsz / q} \, \ralpha^{-2}(\bsh)
  \\
  &\hspace{10mm} =
  2 \sum_{\substack{q\in\Pn \\ q<p}}
  \frac{1}{q} \sum_{\ell = 0}^{q-1}
  \frac{\gamma_s^2}{p} \sum_{k = 0}^{p-1}
  \sigma_\alpha\!\left(\frac{k z_s^{(p)}}{p} + \frac{\ell z_s^{(q)}}{q}\right)
  \underbrace{\prod_{j = 1}^{s-1}\left( 1 + \gamma_j^2 \, \sigma_\alpha\!\left(\frac{k z_j^{(p)}}{p} + \frac{\ell z_j^{(q)}}{q} \right) \right)}_{=: P^{(p,q)}_{s-1}(k, \ell)}
  .
\end{align*}
The innermost summation over $k$ can again be recognised as a convolution and hence can be performed via an FFT method for all $z_s^{(p)} \in \Z_p$ in $\calO(n \ln n)$.
So the cost of computing this full expression for all values of $z_s^{(p)} \in \Z_p$ is $\calO(n^3)$.
The cost of storing all of the products $P^{(p,q)}_{s-1}(k, \ell)$ requires $\calO(n^4/(\ln n)^2)$ memory.

Finally, the third term is
\begin{align*}
  &2 \sum_{\substack{q\in\Pn \\ q>p}} \sum_{\substack{\bsh \in \Z^s \\ h_s \neq 0 \\ h_s\equiv_q 0 \\ \bsh\cdot\bsz \equiv_p 0 \\ \bsh'\cdot\bsz' \equiv_q 0}} \ralpha^{-2}(\bsh)
  \\
  &\hspace{10mm} =
  2 \sum_{\substack{q\in\Pn \\ q>p}}
  \frac{1}{q^{2 \alpha}} \sum_{\substack{\bsh \in \Z^s \\ h_s \neq 0}}
  \bbone(q h_s z_s^{(p)} \equiv_p -\bsh' \cdot \bsz') \, \bbone(\bsh' \cdot \bsz' \equiv_q 0) \, \ralpha^{-2}(\bsh)
  \\
  &\hspace{10mm} =
  2 \sum_{\substack{q\in\Pn \\ q>p}}
  \frac{1}{q^{2 \alpha}} \sum_{\substack{\bsh \in \Z^s \\ h_s \neq 0}}
  \frac{1}{p} \sum_{k = 0}^{p-1}
  \frac{1}{q} \sum_{\ell = 0}^{q-1}
  \frac{\gamma_s^2 \, \Euler{}{k q h_s z_s^{(p)}/p}}{|h_s|^{2 \alpha}}
  \prod_{\substack{j = 1 \\ h_j \neq 0}}^{s-1} \Euler{}{h_j (k z_j^{(p)} / p + \ell z_j^{(q)} / q)} \frac{\gamma_j^2}{|h_j|^{2 \alpha}}
  \\
  &\hspace{10mm} =
  2 \sum_{\substack{q\in\Pn \\ q>p}}
  \frac{\gamma_s^2}{q^{2 \alpha+1} \, p} \sum_{k = 0}^{p-1}
  \sigma_\alpha\!\left(\frac{k \, q \, z_s^{(p)}}{p}\right)
  \left[ \sum_{\ell = 0}^{q-1} \prod_{j=1}^{s-1} \left(1 + \gamma_j^2 \, \sigma_\alpha\!\left(\frac{k z_j^{(p)}}{p} + \frac{\ell z_j^{(q)}}{q}\right)\right) \right]
  .
\end{align*}
Note that the product in the last line is the quantity $P^{(p,q)}_{s-1}(k,\ell)$.
Again, the sum over $k$ can be recognised as a convolution and hence can be performed via an FFT method for all $z_s^{(p)} \in \Z_p$ in $\calO(n\ln n)$.
Computing the square brackets as a vector for each value of $k \in \Z_p$ however takes~$\calO(n^2)$ and thus dominates the $\calO(n \ln n)$.
The total cost for computing this part for $z_s^{(p)} \in \Z_p$ hence is $\calO(n^3 / \ln n)$.
\end{proof}

\section{Numerical results} \label{section:numerical_results}

We now present numerical results for the new Algorithm~\ref{alg:constructing_z}.
For $\alpha = 1$ and $\alpha = 2$, $d=5$ and product weights $\gamma_j = j^{-3}$, we show in Figure~\ref{fig:DetRanError} a log-log plot of $\eran{d}(\randpstarzstarPn)$ against the number of function evaluations $n$ (red) with the generating vector $\bsz_{\Pn}^\star$ chosen via Algorithm~\ref{alg:constructing_z} and $\eran{d}(Q_{d,n,\bsz_n^\star}) = \edet{d}(Q_{d,n,\bsz_n^\star})$ against the number of function evaluations $n$ (blue) with the generating vector chosen via the deterministic CBC construction \cite{K2003,NC2006-1}.

\begin{figure}[H]
  \centering
  \begin{subfigure}{.5\linewidth}
    \centering
    \begin{tikzpicture}[scale=0.8, every node/.style={scale=0.6}]
      \begin{loglogaxis}[
          label style={font=\Large},
          xlabel = $n$,
          ylabel = $\eran{d}(\cdot)$
      ]
          \addplot[
              mark = *,
              mark size = 1.5pt,
              color = red,
              mark color = red
          ]
          table[y=ERan]
          {DetRanErrorVals1.txt};
          \label{plot:ERan1}

          \addplot+[
              color = red,
              mark = none,
              dashed
          ]
          table[y=RanOptLine]
          {DetRanErrorVals1.txt};
          \label{plot:RandOptLine1}

          \addplot[
              mark = *,
              mark size = 1.5pt,
              color = blue,
              mark color = blue
          ]
          table[y=EDet]
          {DetRanErrorVals1.txt};
          \label{plot:EDet1}

          \addplot+[
              color = blue,
              mark = none,
              dashed
          ]
          table[y=DetOptLine]
          {DetRanErrorVals1.txt};
          \label{plot:DetOptLine1}

          % Slope triangle rand
          \addplot[red, dashed] coordinates {
              (337,0.0016889270952836744)
              (337,0.0009603587910222988)
          };
          \addplot[red, dashed] coordinates {
              (337,0.0009603587910222988)
              (491,0.0009603587910222988)
          };
          \node [red] at (axis cs:301,0.0013) {$1.5$};
          \node [red] at (axis cs:400,0.00079) {$1$};

          % Slope triangle det
          \addplot[blue, dashed] coordinates {
              (337,0.007144936226081703)
              (337,0.0049039582651518)
          };
          \addplot[blue, dashed] coordinates {
              (337,0.0049039582651518)
              (491,0.0049039582651518)
          };
          \node [blue] at (axis cs:310,0.006) {$1$};
          \node [blue] at (axis cs:400,0.0041) {$1$};
      \end{loglogaxis}
      \readrecordarray{DetRanErrorGradients1.txt}\data
      \node [draw = gray!40,fill=white,thin] at (2.5,0.7) {
              \shortstack[l]{
              \ref{plot:EDet1} $\eran{d}(Q_{d,n,\bsz^\star_n})$ : gradient = $\data[2]$ \\
              \ref{plot:ERan1} $\eran{d}(\randpstarzstarPn)$ : gradient = $\data[1]$
              }
          };
    \end{tikzpicture}
    \caption{$\alpha = 1$, $d = 5$, $\gamma_j = j^{-3}$}
  \end{subfigure}%
  \begin{subfigure}{.5\linewidth}
    \centering
    \begin{tikzpicture}[scale=0.8, every node/.style={scale=0.6}]
      \begin{loglogaxis}[
          label style={font=\Large},
          xlabel = $n$,
          ylabel = $\eran{d}(\cdot)$
      ]
          \addplot[
              mark = *,
              mark size = 1.5pt,
              color = red,
              mark color = red
          ]
          table[y=ERan]
          {DetRanErrorVals2.txt};
          \label{plot:ERan2}

          \addplot+[
              color = red,
              mark = none,
              dashed
          ]
          table[y=RanOptLine]
          {DetRanErrorVals2.txt};
          \label{plot:RandOptLine2}

          \addplot[
              mark = *,
              mark size = 1.5pt,
              color = blue,
              mark color = blue
          ]
          table[y=EDet]
          {DetRanErrorVals2.txt};
          \label{plot:EDet2}

          \addplot+[
              color = blue,
              mark = none,
              dashed
          ]
          table[y=DetOptLine]
          {DetRanErrorVals2.txt};
          \label{plot:DetOptLine2}

          \addplot[red, dashed] coordinates {
              (1759,2.4824138176607193e-07)
              (1759,9.917039120257037e-08)
          };
          \addplot[red, dashed] coordinates {
              (1759,9.917039120257037e-08)
              (2539,9.917039120257037e-08)
          };
          \node [red] at (axis cs:1560,1.56901864e-7) {$2.5$};
          \node [red] at (axis cs:2113,7.5e-08) {$1$};

          \addplot[blue, dashed] coordinates {
              (1759,1.7193153037466141e-06)
              (1759,8.252049150746002e-07)
          };
          \addplot[blue, dashed] coordinates {
              (1759,8.252049150746002e-07)
              (2539,8.252049150746002e-07)
          };
          \node [blue] at (axis cs:1620,1.1911286409178e-6) {$2$};
          \node [blue] at (axis cs:2113,6.2e-07) {$1$};
      \end{loglogaxis}
      \readrecordarray{DetRanErrorGradients2.txt}\data
      \node [draw = gray!40,fill=white,thin] at (2.5,0.7) {
              \shortstack[l]{
              \ref{plot:EDet2} $\eran{d}(Q_{d,n,\bsz^\star_n})$ : gradient = $\data[2]$ \\
              \ref{plot:ERan2} $\eran{d}(\randpstarzstarPn)$ : gradient = $\data[1]$
              }
          };
    \end{tikzpicture}
    \caption{$\alpha = 2$, $d = 5$, $\gamma_j = j^{-3}$}
  \end{subfigure}
  \caption{Comparison of the new randomised algorithm (red) with a deterministic lattice rule (blue) for the randomised error.}\label{fig:DetRanError}
\end{figure}
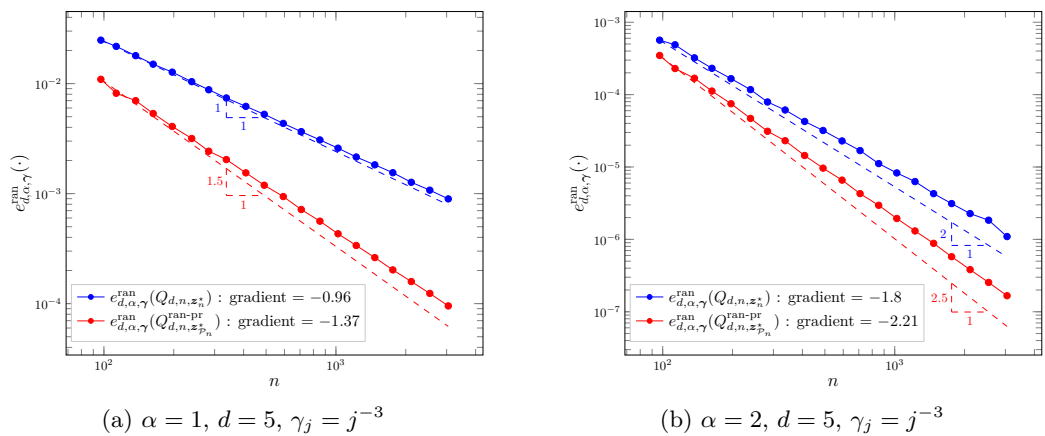

The range of values of $n$ are given by the closest primes to $(1.2)^k$ for $k = 25, \ldots, 44$ and the least squares fit of the gradient of the plotted points is shown.
We expect to see a convergence of~$n^{-\alpha-1/2}$ for the randomised algorithm, and a convergence of~$n^{-\alpha}$ for the deterministic algorithm. These are denoted by the dashed lines on the graphs.
Both graphs demonstrate that the order of convergence of the randomised algorithm exceeds that of the deterministic algorithm, as expected, but the asymptotic rate is not yet achieved for this range of~$n$ (which is also as expected).

\bigskip

\textbf{Acknowledgements.}\quad \mbox{}
We would like to acknowledge the support from the Research Foundation Flanders (FWO G091920N) and the Australian Research Council (DP21010083).

\appendix
\section{Appendix}
% !TEX root =  single-vector-random-n.tex

\begin{proof}[\hypertarget{proof:prop:edet}{Proof of Proposition~\ref{prop:edet}}]
    For an arbitrary $f \in \spaceH$ we see from~\eqref{eq:lattice_rule_error} that
    \begin{align}
        \nonumber
        |Q_{d,n,\bsz}(f)-I_d(f)|
        &\leq
        \sum_{\substack{\bsh\in \Zdnull \\ \bsh\cdot\bsz \equiv_n 0}} |\hat{f}(\bsh)|
        \\\nonumber
        &\leq
        \Bigg(\sum_{\substack{\bsh\in \Zdnull \\ \bsh\cdot\bsz \equiv_n 0}} \ralpha^{-2}(\bsh) \Bigg)^{1/2} \Bigg(\sum_{\substack{\bsh\in \Zdnull \\ \bsh\cdot\bsz \equiv_n 0}}  \ralpha^2(\bsh) \, |\hat{f}(\bsh)|^2 \Bigg)^{1/2}
        \\\label{eq:Qub}
        &\leq
        \Bigg(\sum_{\substack{\bsh\in \Zdnull \\ \bsh\cdot\bsz \equiv_n 0}} \ralpha^{-2}(\bsh) \Bigg)^{1/2} \norm{f}
        ,
    \end{align}
    where the second inequality is due to Cauchy--Schwarz.
    We see that the function
    $$
    f(\bsx)
    =
    \sum_{\bsh \in \Zdnull}
    \ralpha^{-2}(\bsh) \, \rme^{2 \pi \rmi \bsh \cdot \bsx}
    \qquad
    \in \qquad \spaceH
    $$
    attains equality for the upper bound on the error~\eqref{eq:Qub} and hence we have an explicit expression for the deterministic worst case error of a lattice rule.
\end{proof}

\begin{proof}[\hypertarget{proof:lemma:averaging_over_vecs}{Proof of Lemma~\ref{lemma:averaging_over_vecs}}]
    For the first part we note that when all of the components of $\bsh$ are multiples of $p$ then clearly any possible $\bsz \in \Z_p^d$ is a solution to the congruence. On the other hand, if there is at least one component which is not a multiple of $p$, say $h_i$, then we can see that there is exactly one possibility for $z_i \in \Z_p$ which solves the congruence
    $$
    h_i z_i
    \equiv_p
    -\sum_{\substack{j = 1 \\ j \neq i}}^d h_j z_j
    ,
    $$
    for all $p^{d-1}$ possible values of the $z_j \in \Z_p$ for $j \neq i$.
    Therefore, there will be exactly $p^{d-1}$ solutions to the congruence over all the values of $\bsz \in \Z_p^d$.

    We, therefore, have
    \begin{align*}
        \frac{1}{p^d}\sum_{\bsz \in \Z_p^d} \bbone(\bsh\cdot\bsz \equiv_p 0)
        &=
        \bbone(\bsh \equiv_p \bszero) + \frac{1}{p} \big(1-\bbone(\bsh \equiv_p \bszero)\big)
        \\
        &=
        \frac{p-1}{p}\bbone(\bsh \equiv_p \bszero) + \frac{1}{p}
        .
    \end{align*}

    The second part comes from the observation that from the above $p^{d-1}$ solutions it could be that $\calZ$ does not include all of them and hence
    \begin{align*}
        \frac{1}{|\calZ|} \sum_{\bsz \in \calZ} \bbone(\bsh\cdot\bsz \equiv_p 0)
        &\leq
        \bbone(\bsh \equiv_p \bszero) + \frac{p^{d-1}}{\ceil{\tau p^d}} \, \big(1-\bbone(\bsh \equiv_p \bszero)\big) \\
        &\leq
        \frac{\tau p - 1}{\tau p}\bbone(\bsh \equiv_p \bszero) + \frac{1}{\tau p}
        .
        \qedhere
    \end{align*}
\end{proof}

\begin{proof}[\hypertarget{proof:proposition:deterministic_bound_is_achieved}{Proof of Proposition~\ref{proposition:deterministic_bound_is_achieved}}]
    We fix $1/2 \leq \lambda < \alpha$ and take the average
        \begin{align*}
            \frac{1}{p^d}\sum_{\bsz \in \Z_p^d} \left[\edet{d}(Q_{d,p,\bsz})\right]^{1/\lambda}
            &= \frac{1}{p^d}\sum_{\bsz \in \Z_p^d} \Bigg(\sum_{\substack{\bsh \in \Zdnull \\ \bsh \cdot \bsz \equiv_{p} 0}} \ralpha^{-2}(\bsh)\Bigg)^{1/2\lambda} \\
            &\leq \frac{1}{p^d}\sum_{\bsz \in \Z_p^d} \sum_{\substack{\bsh \in \Zdnull \\ \bsh \cdot \bsz \equiv_{p} 0}} \ralpha^{-1/\lambda}(\bsh) \\
            &= \sum_{\bsh \in \Zdnull} \bigg(\frac{1}{p^d}\sum_{\bsz \in \Z_p^d} \bbone(\bsh \cdot \bsz \equiv_{p} 0)\bigg) \ralpha^{-1/\lambda}(\bsh) \\
            &= \sum_{\bsh \in \Zdnull} \bbone(\bsh \equiv_{p} \bszero) \, \ralpha^{-1/\lambda}(\bsh) +\frac{1}{p}\sum_{\bsh \in \Zdnull} \bbone(\bsh \not\equiv_{p} \bszero) \, \ralpha^{-1/\lambda}(\bsh) \\
            &\leq \sum_{\bsh \in \Zdnull}   \left(p^{|\supp(\bsh)|}\right)^{-\alpha/\lambda}\ralpha^{-1/\lambda}(\bsh) +\frac{1}{p} \, \muquant \\
            &\leq \frac{2}{p} \, \muquant
            .
    \end{align*}
    On the fourth line of the above, we used Lemma~\ref{lemma:averaging_over_vecs} and on the fifth line $\bsh$ was replaced with $p \bsh$ after which we used~\eqref{eq:ralpha-property}.

    For the second part of the proof, choose $\bsz^\star$ to be the vector which minimises the value of~$\edet{d}(Q_{d,p,\bsz})$. Therefore, for any $\lambda > 0$ the minimal value to the power $1/\lambda$ will be at least as good as the average of those values, and thus certainly
    $$
    \left[\edet{d}(Q_{d,p,\bsz^\star})\right]^{1/\lambda}
    \leq
    \frac{1}{p^d}\sum_{\bsz \in \Z_p^d} [\edet{d}(Q_{d,p,\bsz})]^{1/\lambda}
    \qquad \forall \lambda \in [1/2,\alpha)
    ,
    $$
    and the result holds for such a $\bsz^\star$.
\end{proof}

\begin{proof}[\hypertarget{proof:proposition:bound_on_theta}{Proof of Proposition~\ref{proposition:bound_on_theta}}]
    We take the average
    \begin{align*}
        \frac{1}{p} \sum_{z_s \in \Z_p} \left[\theta_s^{(p)}(z_s)\right]^{1/2\lambda}
        &\leq
        \frac{1}{p} \sum_{z_s \in \Z_p} \sum_{\substack{\bsh\in\bbZ^{s} \\ h_s \ne 0 \\ \bsh\cdot\bsz\equiv_p 0}}
        \ralpha^{-1/\lambda}(\bsh)
        \\
        &=
        \frac{1}{p} \sum_{z_s \in \Z_p} \sum_{\substack{\bsh\in\bbZ^{s} \\ h_s \ne 0 \\ h_s \not\equiv_p 0 \\ \bsh\cdot\bsz\equiv_p 0}}
        \ralpha^{-1/\lambda}(\bsh)
        +
        \frac{1}{p} \sum_{z_s \in \Z_p} \sum_{\substack{\bsh\in\bbZ^{s} \\ h_s \ne 0 \\ h_s \equiv_p 0 \\ \bsh\cdot\bsz\equiv_p 0}}
        \ralpha^{-1/\lambda}(\bsh)
        \\
        &=
        \frac{1}{p} \sum_{\substack{\bsh\in\bbZ^{s} \\ h_s \ne 0 \\ h_s \not\equiv_p 0}}
        \ralpha^{-1/\lambda}(\bsh)
        +
        \sum_{\substack{\bsh\in\bbZ^{s} \\ h_s \ne 0 \\ h_s \equiv_p 0 \\ \bsh'\cdot\bsz'\equiv_p 0}}
        \ralpha^{-1/\lambda}(\bsh)
        \\
        &\le
        \frac{1}{p} \sum_{\substack{\bsh\in\bbZ^{s} \\ h_s \ne 0 \\ h_s \not\equiv_p 0}}
        \ralpha^{-1/\lambda}(\bsh)
        +
        \frac{1}{p^{\alpha/\lambda}} \sum_{\substack{\bsh\in\bbZ^{s} \\ h_s \ne 0 \\ \bsh'\cdot\bsz'\equiv_p 0}}
        \ralpha^{-1/\lambda}(\bsh)
        \\
        &\le
        \frac{2}{p} \sum_{\substack{\bsh\in\bbZ^{d} \\ h_s \ne 0}}
        \ralpha^{-1/\lambda}(\bsh)
        .
    \end{align*}
    For the second part, in a similar manner to the proof of Proposition~\ref{proposition:deterministic_bound_is_achieved} we can see that the optimal value of $\theta_s^{(p)}(z_s)$ must be less than the average and must satisfy the bound for all $\lambda \in (1/2,\alpha]$.
\end{proof}

\begin{proof}[\hypertarget{proof:lemma:good_sets_are_large}{Proof of Lemma~\ref{lemma:good_sets_are_large}}]
    By Markov's inequality and Proposition~\ref{proposition:deterministic_bound_is_achieved}, we can see that
    $$
    \bbP\left(\left[\edet{d}(Q_{d,p,\bsz})\right]^{1/\lambda} \leq \frac{2}{(1-\tau) \, p} \, \muquant\right)
    \geq
    \tau
    .
    $$
    This implies that, for each $\lambda > 1/2$, there are at least $\ceil{\tau p^d}$ such vectors which satisfy
    $$
    \edet{d}(Q_{d,p,\bsz})
    \leq
    \left(\frac{2}{(1-\tau) \, p} \, \muquant \right)^{\lambda}
    .
    $$
    The bound above approaches infinity as $\lambda$ approaches $\alpha$, see~\eqref{eq:mu_quantity},
    meaning that the minimal value of
    $$
    \left(\frac{2}{(1-\tau) \, p}\, \muquant\right)^{\lambda}
    $$
    is attained for some $\lambda$ and we can therefore say that all the vectors satisfying the inequality for this minimising $\lambda$ will satisfy it for all other $\lambda$ in the range.
\end{proof}

\begin{proof}[\hypertarget{proof:lemma:lower_bound_on_r}{Proof of Lemma~\ref{lemma:lower_bound_on_r}}]
    For prime $p$ and $\bsz \in G_\tau^{(p)}$, as defined in~\eqref{eq:good_sets}, we have
    $$
    \sum_{\substack{\bsh \in \Zdnull \\ \bsh \cdot \bsz \equiv_p 0}} \ralpha^{-2}(\bsh)
    \leq
    \left(\frac{2}{(1-\tau) \, p} \, \muquant\right)^{2 \lambda}
    \qquad \forall \lambda \in [1/2,\alpha)
    .
    $$
    This implies that whenever $\bsh \in \Z^d \setminus \{\bszero\}$ satisfies $\bsh \cdot \bsz \equiv_p 0$ we have
    \begin{equation*}
    \ralpha(\bsh)
    \ge
    p^{\lambda} \left[\frac{2}{1-\tau} \, \muquant \right]^{-\lambda}
    \qquad \forall \lambda \in [1/2,\alpha)
    .
    \end{equation*}
    The remainder of the statement follows from $p > n/2$ for $p \in \Pn$.
\end{proof}

\begin{proof}[\hypertarget{proof:lemma:omega_bound}{Proof of Lemma~\ref{lemma:omega_bound}}]
    We note that we are in the situation with $\bsh \ne \bszero$.
    Using Lemma~\ref{lemma:averaging_over_vecs} and $p > n/2$ for $p \in \Pn$, we obtain
    \begin{align*}
      \omega_{n,\tau}(\bsh)
      \le
      \frac{1}{|\Pn|} \sum_{p \in \Pn} \left[
        \frac{1}{\tau p}
        +
        \frac{\tau p - 1}{\tau p} \, \bbone(\bsh \equiv_p \bszero)
      \right]
      &<
      \frac{2}{\tau n}
      +
      \frac{1}{|\Pn|} \sum_{p \in \Pn} \bbone(\bsh \equiv_p \bszero)
      \\
      &=
      \frac{2}{\tau n}
      +
      \frac{1}{|\Pn|} \sum_{p \in \Pn} \prod_{j \in \supp(\bsh)} \bbone(h_j \equiv_p 0)
      \\
      &\le
      \frac{2}{\tau n}
      +
      \frac{1}{|\Pn|} \sum_{j \in \supp(\bsh)} \sum_{p \in \Pn} \bbone(h_j \equiv_p 0)
      ,
    \end{align*}
    where we used $\bsh \ne \bszero$ for the last line.
    We now want to obtain an upper bound on the number of $p \in \Pn$ which divide any of the nonzero $h_j$ in $\bsh$, which is the sum over $p \in \Pn$ in the last line above.
    For $h \in \N$, i.e., $h \ne 0$, consider its prime factorisation $h = \prod_{i=1}^r p_i^{m_i}$ with $p_1 < \cdots < p_r$ and all $m_i \in \N$, then, for real $q > 1$
    we can bound the number of prime divisors larger than or equal to $q$ of $h$ by $\log_q(h)$ since
    $$
      \sum_{\substack{\text{prime } p \\ p \ge q}} \bbone(h \equiv_p 0)
      =
      \sum_{\substack{i=1 \\ p_i \ge q}}^r 1
      \le
      \sum_{\substack{i=1 \\ p_i \ge q}}^r \underbrace{m_i \log_q(p_i)}_{\ge 1}
      +
      \sum_{\substack{i=1 \\ p_i < q}}^r \underbrace{m_i \log_q(p_i)}_{> 0}
      =
      \log_q(h)
      .
    $$
    Hence, using $|\Pn| > \constPn \, n / \ln n$, see~\eqref{eq:PNT_lowerbound}, and $1/\ln(n/2) \le 2/\ln n$ for $n \ge 4$, and $0 < \lambda \le \alpha$, we obtain, for $\bsh \in \Z^d \setminus \{\bszero\}$,
    \begin{align*}
      \frac{2}{\tau n}
      +
      \frac{1}{|\Pn|} \sum_{j \in \supp(\bsh)} \sum_{p \in \Pn} \bbone(h_j \equiv_p 0)
      &\le
      \frac{2}{\tau n}
      +
      \frac{1}{|\Pn|} \sum_{j \in \supp(\bsh)} \sum_{\substack{\text{prime } p \\ p > n/2}} \bbone(h_j \equiv_p 0)
      \\
      &\le
      \frac{2}{\tau n}
      +
      \frac{1}{|\Pn|} \sum_{j \in \supp(\bsh)} \frac{\ln(|h_j|)}{\ln(n/2)}
      \\
      &\le
      \frac{2}{\tau n}
      +
      \frac{2}{\constPn n} \ln\Big(\prod_{j\in\supp(\bsh)} |h_j|\Big)
      \\
      &\le
      \frac{2}{\tau n}
      +
      \frac{2}{\constPn n} \ln\Big(\prod_{j\in\supp(\bsh)} |h_j|^{\alpha/\lambda}\Big)
      \\
      &\le
      \frac{2}{\tau n}
      +
      \frac{2}{\constPn n \delta} \underbrace{\prod_{j\in\supp(\bsh)} |h_j|^{\alpha \delta/\lambda}}_{\ge 1}
      \\
      &\le
      \frac{1}{n} \left( \frac{2}\tau + \frac{2}{\constPn \delta} \right)  \gamma_{\supp(\bsh)}^{\delta/\lambda} \, \ralpha^{\delta/\lambda}(\bsh)
      ,
    \end{align*}
    where we used $\ln x \le (x^\delta-1)/\delta \le x^\delta/\delta$ for $x \ge 0$ and any $\delta > 0$, see \cite[Equation~4.5.5]{DLMF}.
    We finish the proof by using $\gamma_{\supp(\bsh)}^{\delta/\lambda} \le \max_{\emptyset \ne \setu \subset \N} \gamma_{\setu}^{\delta/\lambda} \le \max_{\emptyset \ne \setu \subset \N} \gamma_{\setu}$ for $\delta \le \lambda$.
\end{proof}

\begin{proof}[\hypertarget{proof:lemma:Gtilde_are_large}{Proof of Lemma~\ref{lemma:Gtilde_are_large}}]
    We recall the bound from Proposition~\ref{proposition:bound_on_theta},
    $$
    \frac{1}{p} \sum_{z_s^{(p)} \in \Z_p} \left[\theta_s^{(p)}(z_s^{(p)})\right]^{1/2\lambda}
    \leq
    \frac{2}{p} \sum_{\substack{\bsh\in\bbZ^{s} \\ h_s \ne 0}}
    \ralpha^{-1/\lambda}(\bsh)
    .
    $$
    We can use Markov's inequality to see that
    $$
    \mathbb{P}\bigg(\left[\theta_s^{(p)}(z_s^{(p)})\right]^{1/2\lambda} \geq \frac{2}{(1-\tau) \, p} \sum_{\substack{\bsh\in\bbZ^{s} \\ h_s \ne 0}}
    \ralpha^{-1/\lambda}(\bsh)\bigg)
    \leq
    1-\tau
    ,
    $$
    and therefore
    $$
    \mathbb{P}\bigg(\left[\theta_s^{(p)}(z_s^{(p)})\right]^{1/2\lambda} < \frac{2}{(1-\tau) \, p} \sum_{\substack{\bsh\in\bbZ^{s} \\ h_s \ne 0}}
    \ralpha^{-1/\lambda}(\bsh)\bigg)
    \geq
    \tau
    .
    $$

    This implies that at least $\ceil{\tau p}$ such vectors satisfy
    $$
    \theta_s^{(p)}(z_s^{(p)})
    \leq
    \bigg(\frac{2}{(1-\tau) \, p} \sum_{\substack{\bsh\in\bbZ^{s} \\ h_s \ne 0}}
    \ralpha^{-1/\lambda}(\bsh)\bigg)^{2\lambda}
    $$
    for each $\lambda \in [1/2, \alpha)$.
    Similarly to the proof of Lemma~\ref{lemma:good_sets_are_large}, the minimum of the right-hand side is attained for some $\lambda$ and so values of $z_s^{(p)}$ satisfying the inequality for this minimising $\lambda$ will satisfy it for all other $\lambda$ in the range.
\end{proof}

\begin{proof}[\hypertarget{proof:lemma:lower_bound_on_r_cbc}{Proof of Lemma~\ref{lemma:lower_bound_on_r_cbc}}]
    This follows the proof of Lemma~\ref{lemma:lower_bound_on_r} but with the quantity $\theta^{(p)}_{s,\alpha,\bsgamma,\bsz'}(z_s)$. \\
    For prime $p$, $\bsz' \in \bbZ^{s-1}$ and $z_s \in \widetilde{G}_{s,\tau,\bsz'}^{(p)}$, with $\widetilde{G}_{s,\tau,\bsz'}^{(p)}$ as defined in~\eqref{eq:Gtilde}, we have
    \begin{align*}
        \sum_{\substack{\bsh\in\bbZ^s \\ h_s \ne 0 \\ \bsh\cdot\bsz\equiv_p 0}} \ralpha^{-2}(\bsh)
        \le
        \inf_{\lambda \in [1/2,\alpha)}
        \bigg( \frac{2}{(1-\tau) \, p} \sum_{\substack{\bsh \in\bbZ^s \\ h_s \ne 0}}
        \ralpha^{-1/\lambda}(\bsh) \bigg)^{2\lambda}
        .
    \end{align*}
    This implies that whenever $\bsh \in \Z^s \setminus \{\bszero\}$ satisfies $\bsh \cdot \bsz \equiv_p 0$ we have
    $$
    \ralpha(\bsh)
    >
    \sup_{\lambda \in [1/2,\alpha)} p^{\lambda} \bigg[\frac{2}{1-\tau} \, \sum_{\substack{\bsh \in\bbZ^s \\ h_s\ne 0}} \ralpha^{-1/\lambda}(\bsh) \bigg]^{-\lambda}
    .
    $$
    The remainder of the statement follows from $p > n/2$ for $p \in \Pn$.
\end{proof}

\bibliography{Bibliography}
\bibliographystyle{abbrv}

\end{document}